\documentclass{amsart}
\usepackage[utf8]{inputenc}
\usepackage{amsfonts,latexsym,amssymb,amsmath,amsthm,color}
\usepackage{enumerate}
\usepackage{captdef}
\usepackage{enumerate,graphicx,graphpap}
\usepackage[matrix]{xy}
\usepackage{cases}
\usepackage{mathrsfs}
\usepackage{srcltx}
\usepackage[normalem]{ulem}
\input xy

\usepackage{color}

\definecolor{dgreen}{RGB}{68,156,3}


\newcounter{mnotecount}[section]

\newcommand{\rmnote}[1]{}

\newcommand{\ssso}[1]{}


\setlength{\topmargin}{-2cm} 
\parskip=4.0pt      

\textwidth = 15.00cm \textheight = 22.00cm
\oddsidemargin = 0.12in
\voffset1.5cm

\theoremstyle{plain}
\newtheorem{teor}{Theorem}[section]
\newtheorem{lema}[teor]{Lemma}
\newtheorem{coro}[teor]{Corollary}
\newtheorem{prop}[teor]{Proposition}

\theoremstyle{definition}
\newtheorem{defi}{Definition}[section]

\newtheorem{eje}{Example}[section]
\newtheorem{nota}[teor]{Remark}
\newtheorem*{gracias}{Acknowledgments}

\newtheoremstyle{teoremacita}
{3pt}
{3pt}
{\itshape}
{}
{\bfseries}
{}
{ }
{\thmname{#1}\thmnumber{ #2'}\thmnote{ #3}.}
\theoremstyle{teoremacita} \newtheorem*{teor*}{}

\newcommand{\be}{\begin{enumerate}}
\newcommand{\ee}{\end{enumerate}}

\newcommand{\bi}{\begin{itemize}}
\newcommand{\ei}{\end{itemize}}



\def\N{\mathbb N}

\def\Q{\mathbb Q}
\def\R{\mathbb R}
\def\C{\mathbb C}

\def\PC{\mathbb{P}_\C^1}
\def\OO{\mathcal{O}}

\def\a{\alpha}
\def\b{\beta}

\def\tt{{\boldsymbol t}}
\def\xx{{\boldsymbol x}}
\def\yy{{\boldsymbol y}}

\def\aa{{\boldsymbol \alpha}}
\def\bb{{\boldsymbol \beta}}
\def\gg{{\boldsymbol \gamma}}
\def\ee{{\boldsymbol \varepsilon}}
\def\00{{\boldsymbol 0}}
\def\11{{\boldsymbol 1}}

\def\RR{{\boldsymbol R}}
\def\PP{{P}}
\def\QQ{{Q}}

\def\d{\partial}

\hyphenation{}

\addtolength{\parskip}{3pt}

\begin{document}

\title[Tauberian theorems for $k$--summability with respect to an analytic germ]{Tauberian theorems for $k$--summability with respect to an analytic germ}

\author{Sergio A. Carrillo, Jorge Mozo-Fern\'{a}ndez, Reinhard  Sch\"{a}fke}
\address{(Sergio A. Carrillo) Fakult\"at f\"ur Mathematik, Universit\"at Wien, Oskar-Morgenstern-Platz~1, A-1090 Wien, Austria. Escuela de Ciencias Exactas e Ingenier\'{i}a, Universidad Sergio Arboleda, Calle 74, $\#$ 14-14, Bogot\'{a}, Colombia.}

\email{sergio.carrillo@univie.ac.at, sergio.carrillo@usa.edu.co}

\address{ (Jorge Mozo-Fern\'{a}ndez) Dpto. \'{A}lgebra, An\'{a}lisis Matem\'{a}tico, Geometr\'{i}a y Topolog\'{i}a, Facultad de Ciencias, Universidad de Valladolid, Campus Miguel Delibes, Paseo de Bel\'{e}n, 7, 47011 Valladolid, Spain.}

\email{jorge.mozo@uva.es}

\address{(Reinhard Sch\"{a}fke) Institut de Recherche Math\'{e}matique Avanc\'{e}e, U.F.R. de Math\'{e}matiques et Informatique, Universit\'{e} de Strasbourg et C.N.R.S., 7, rue Ren\'{e} Descartes, 67084 Strasbourg cedex, France.}

\email{schaefke@unistra.fr}

\thanks{The first author was supported by the Austrian FWF-Project P 26735-N25. The first and second authors are partially supported by the Ministerio de Econom\'{\i}a y Competitividad from Spain, under the Project ``\'{A}lgebra y geometr\'{\i}a en sistemas din\'{a}micos y foliaciones singulares" (Ref.: MTM2016-77642-C2-1-P)}


\subjclass[2010]{Primary 34M30, Secondary 34E05}
\date{\today}

\begin{abstract}The goal of this article is to establish tauberian theorems for the $k$--summability processes defined by germs of analytic functions in several complex variables. The proofs are based on the tauberian theorems for $k$--summability in one variable and in monomials, and a method of monomialization of germs of analytic functions. 
\end{abstract}

\maketitle

\section{Introduction}\label{Introduction}

This paper aims to be a step towards a general theory of multisummability in several variables, related with the notion of asymptotic expansion with respect to an analytic germ.

More precisely, consider a wide range of analytic problems, as differential, difference equations, or any other kind of more general functional equations. We can distinguish two situations when we search for local solutions at a point. At non-singular points it is customary to obtain analytic solutions under mild assumptions. It is at singular points where formal solutions occur. They can consist of formal power series including logarithms, exponential series or even more complicated objects. Then the question of how to associate  a true solution to a formal one is of great importance in the understanding of the given problem.

Borel summability and more generally $k$--summability are classical and have proved to be efficient summability methods in one variable to approach such problems, see \cite{Ramis1,Balser2}. For instance they have been successfully applied to holomorphic ordinary differential equations at irregular singular points and to families of partial differential equations in two variables of non-Kowalevskian type. Unfortunately these methods are not powerful enough to sum all formal power series solutions of the previous problems. A fundamental result in the theory of holomorphic ODEs at singular points is the \textit{multisummability} of its formal power series solutions: every formal solution can be built from $k$--summable series for different values of $k$. A cornerstone in the theory of multisummability is the following tauberian condition (see Theorem \ref{Tauberian classical}): a series $k$--summable for two different values of $k$ is convergent. 

For several variables the same questions appear when facing for instance singularly perturbed ordinary and partial differential equations. In previous works of the authors, $k$--summability in a monomial has been used effectively in these problems. In fact, the concept was investigated in detail for two variables in \cite{Monomial summ} and applied to doubly singular equations. 
Then it was also used in \cite{C, CM2} to sum formal power series solutions of singularly perturbed first order partial differential equations using a Borel-Laplace analysis adapted to this situation. 

Recently the notions of asymptotic expansions and $k$--summability in a germ of  analytic function have been defined and developed systematically in \cite{Sum wrt germs} by the second and third authors, generalizing their previous work \cite{Monomial summ} with M.\ Canalis-Durand for monomials. This theory has been proved  to behave well under blow-ups and it is also stable under the usual algebraic operations and differentiation. 
The authors are convinced that it is an important notion of summability in several variables useful
to sum formal solutions of certain partial differential equations and in the study of normal forms and reduction of singularities of holomorphic foliations. 

The tauberian theorems we present here contribute to the study of these new summability methods, extending naturally the ones for $k$--summability in one variable and the ones for monomials in \cite{CM} and \cite{C}. They provide a criterion to determine when two such methods are equivalent in the sense that they sum the same formal series. It turns out that they also associate the same value to a summable series in this case. Our theorems also imply that a series summable w.r.t.\ two essentially different methods is convergent and provide examples of series that cannot be summed w.r.t.\ any of them as it had been done in one dimension. The proofs are based on the  classical methods to prove tauberian properties for $k$--summability in one variable, c.f., \cite{RS89}, and on induction on the number of steps to monomialize the germs of analytic functions involved.

The structure of the paper is as follows: Section \ref{Remarks on Monomialization} contains the facts on monomialization of germs of analytic functions we will use during the paper, as presented in \cite{Sum wrt germs}. Section \ref{Asymptotic expansions and summability wrt an analytic germ} and \ref{Summability in an analytic germ} are devoted to recall the concept and main results on asymptotic expansions and $k$--summability in an analytic germ. Section \ref{Summability in an analytic germ} also includes new properties of $P$-$s$--Gevrey series. Finally Section \ref{Tauberian properties for summability} contains the main results of this work, namely, the tauberian properties for these summability methods (Theorems \ref{no sing directions then convergence for monomials} and \ref{Main Result}).

\begin{gracias}
{The first author wants to acknowledge professor Armin Rainer from University of Vienna for fruitful discussions and support under his FWF-Project P 26735-N25. The second author thanks University of Vienna for his stay there while preparing this work.}
\end{gracias}

\section{Remarks on Monomialization}\label{Remarks on Monomialization}

Let $\N$ denote the set of natural numbers including $0$ and $\N^+=\N\setminus\{0\}$.  We also write $\R{^+}$ for the set of positive real numbers.

Let $d\geq 2$ be an integer. We will work with $(\C^d,\00)$ and local coordinates $\xx=(x_1,\dots,x_d)$. We will write $\xx'=(x_2,\dots,x_d)$ and $\xx''=(x_3,\dots,x_d)$. $\widehat{\mathcal{O}}=\C[[\xx]]$ and  $\mathcal{O}=\C\{\xx\}$ will denote the rings of formal and convergent power series in $\xx$ with complex coefficients, respectively. $\widehat{\mathcal{O}}^\ast=\{U\in\widehat{\mathcal{O}}\hspace{0.1cm}|\hspace{0.1cm} U(\00)\neq 0\}$, $\mathcal{O}^\ast=\{U\in\mathcal{O} \hspace{0.1cm}|\hspace{0.1cm} U(\00)\neq 0\}$ will denote the corresponding groups of units. If $\boldsymbol{\beta}=(\beta_1,\dots,\beta_d)\in\N^d$ we use the multi-index notation $|\boldsymbol{\beta}|=\beta_1+\cdots+\beta_d$, $\bb!=\beta_1!\cdots\beta_d!$, $\boldsymbol{x}^{\boldsymbol{\beta}}=x_1^{\beta_1}\cdots x_d^{\beta_d}$ and  $\frac{\d^{\boldsymbol{\beta}}}{\d \boldsymbol{x}^{\boldsymbol{\beta}}}=\frac{\d^{|\boldsymbol{\beta}|}}{\d x_1^{\beta_1}\cdots \d x_d^{\beta_d}}$.

We will use blow-ups of codimension two smooth varieties. We choose the center of the blow-up to be $\{x_1=x_2=0\}$ and we will denote by  
$$M=\{([u_1,u_2],\tt)\in \mathbb{P}_\C^1\times \C^d | u_1t_2=u_2t_1\},\quad b:M\rightarrow \C^d,$$ the \textit{blow-up manifold} and the \textit{canonical projection} over the base space $\C^d$, respectively. Here $\mathbb{P}_\C^1$ denotes the complex projective line. The set $\mathbb{P}_\C^1\times \{(0,0)\}\times \C^{d-2}$ is called the \textit{exceptional divisor}. It is connected and for $d=2$ it is also compact.

$M$ is covered by affine charts, each one analytically equivalent to $\C^d$. In fact, identifying $\mathbb{P}_{\C}^1=\C\cup\{\infty\}$ as $[1,\xi]\equiv \xi\in\C$, $[0,1]\equiv\infty$, we use the charts centered at $\xi\in\C$ and $\infty$, \begin{align*}
\phi_\xi: M_\xi\longrightarrow \C^d,& \quad\left([u_1,u_2],\tt\right)\longmapsto \left(\frac{u_2}{u_1}-\xi,t_1,\tt''\right),\\
\phi_\infty: M_\infty\longrightarrow \C^d,&\quad \left([u_1,u_2],\tt\right)\longmapsto \left(\frac{u_1}{u_2},t_2,\tt''\right),
\end{align*} respectively, where $M_\xi=M_0=\{([u_1,u_2],\tt)\in M | u_1\neq 0\}$  and $M_\infty=\{([u_1,u_2],\tt)\in M | u_2\neq 0\}$. Then the map $b$ in the charts $\phi_\xi, \phi_\infty$ takes the form \begin{align*}
b_\xi=b\circ \phi_\xi^{-1}:\C^d\longrightarrow \C^d,& \quad\boldsymbol{v}\longmapsto (v_2,(\xi+v_1)v_2,\boldsymbol{v}''),\\
b_\infty=b\circ \phi_\infty^{-1}:\C^d\longrightarrow \C^d,&\quad \boldsymbol{v}\longmapsto (v_1v_2,v_2,\boldsymbol{v}'').
\end{align*} We will also use the ramifications determined by $$r_m:\C^d\longrightarrow\C^d,\quad \tt\longmapsto (t_1^m,\tt'), \quad m\geq2.$$

We say that $f\in\mathcal{O}$ has \textit{normal crossings} (at the origin) if there is a diffeomorphism $D\in\text{Diff}(\C^d,\00)$ such that $(f\circ D)(\xx)=\xx^{\bb}\cdot U(\xx)$ for some $\bb\in\N^d$ and $U\in\mathcal{O}^\ast$. Moreover, if $f_1\cdot f_2 \cdots f_n$ has normal crossings, then every $f_j$ has normal crossings. The converse is also true assuming that a common diffeomorphism $D$  can be found for all $f_j$.

Using these blow-ups and ramifications it is possible to achieve normal crossings for a given holomorphic map, see \cite{RSW}, and \cite[Lemma 2.1]{Sum wrt germs} for the main ideas of the proof.

\begin{lema}\label{Normalization} There exists a function $h:\mathcal{O}\setminus\{0\}\rightarrow \N$ with the following properties:
	\begin{enumerate}
		\item  If $h(f)=0$, then $f$ has normal crossings.
		
		\item If $h(f)>0$, then there exists a diffeomorphism $D\in\text{Diff}(\C^d,\00)$ such that either for all $\xi\in\mathbb{P}_\C^1$ $$h(f\circ D\circ b_\xi)<h(f),$$ or there exists $m\in\N$, $m\geq2$ such that $h(f\circ D\circ r_m)<h(f)$.
	\end{enumerate}
\end{lema}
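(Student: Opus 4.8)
The plan is to construct $h$ as an intrinsic measure of how far $f$ is from having normal crossings: an integer invariant that vanishes exactly on germs with normal crossings and that drops strictly under one application of the allowed operations. Since the center $\{x_1=x_2=0\}$ and the ramifications $r_m$ only affect the first two coordinates, the variables $\xx''=(x_3,\dots,x_d)$ behave as parameters, so the problem reduces to an embedded resolution of the $(x_1,x_2)$-singularity carried out uniformly in $\xx''$. I would emphasize from the outset that the bare multiplicity $\operatorname{mult}_{\00}(f)$ cannot serve as $h$: a single blow-up may leave the multiplicity unchanged (as for $x_2^2-x_1^5$, whose strict transform is $v_1^2-x_1^3$), so the invariant must encode finer data.

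To build $h$ I would first normalize $f$. After a linear change of coordinates — which is a diffeomorphism $D\in\text{Diff}(\C^d,\00)$ — one may assume $f$ is $x_2$-regular of order $p=\operatorname{mult}_{\00}(f)$, and Weierstrass preparation gives $f\circ D=U\cdot W$ with $U\in\OO^\ast$ and $W$ a Weierstrass polynomial of degree $p$ in $x_2$ whose coefficients lie in $\C\{x_1,\xx''\}$ and vanish at the origin. Units already have normal crossings (take $\bb=\00$), so all the complexity sits in $W$. I would then read off $h$ from the Newton polygon of $W$ in $(x_1,x_2)$ together with the residual (edge) polynomials attached to its compact faces, combining the multiplicity $p$ and the number and arithmetic of the edges into a single integer in lexicographic fashion; the definition must be taken so as to record the worst behaviour over the points of $\{x_1=x_2=0\}$ and to remain robust in the parameters $\xx''$.

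The decrease of $h$ is then analysed through the two operations. In the charts $b_\xi$ and $b_\infty$ the exceptional divisor contributes a monomial factor $v_2^{p}$, harmless for normal crossings, while the strict transform has a strictly simpler Newton polygon at every point of the new divisor; this is what yields the strong conclusion that $h(f\circ D\circ b_\xi)<h(f)$ holds for all $\xi\in\PC$ simultaneously, since at the finitely many points where the singularity concentrates the relevant data drops and elsewhere the transform is already normal crossings. When instead a compact edge of the Newton polygon carries a non-integral slope that a blow-up cannot clear, the ramification $r_m$ with a suitable $m\geq 2$ rectifies the lattice and again lowers $h$; this dichotomy is exactly the ``either/or'' of statement (2). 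Finally one checks that $h(f)=0$ if and only if $f$ has normal crossings, which establishes (1).

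The main obstacle, and the reason this exceeds the classical two-variable statement, is twofold. First, property (2) demands a strict drop at \emph{every} point of the exceptional divisor after a \emph{single} blow-up, which rules out multiplicity and forces the finer Newton/characteristic-exponent invariant together with a proof that one blow-up simplifies the data uniformly along the divisor. Second, everything must be controlled uniformly in the parameters $\xx''$: the multiplicity and the Newton polygon may degenerate on a proper subvariety of parameter space, so $h$ has to be defined via the generic (worst) stratum and shown to remain well behaved under the transforms. Carrying out this uniform bookkeeping is the technical heart; the argument follows \cite{RSW} and the scheme of \cite[Lemma 2.1]{Sum wrt germs}.
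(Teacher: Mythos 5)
Your proposal takes a genuinely different route from the paper, and the route has a real gap. First, for orientation: the paper does not prove this lemma at all --- it is quoted from \cite[Lemma 2.1]{Sum wrt germs}, whose substance is the monomialization theorem of \cite{RSW}, and the remark immediately following the statement reveals the intended construction of $h$: one \emph{defines} $h(f)$ as the minimal number of blow-ups and ramifications occurring in any chain of diffeomorphisms, blow-ups $b_\xi$ and ramifications $r_m$ reducing $f$ to normal crossings. With that definition, (1) is immediate and (2) is bookkeeping (the first operation of a minimal chain is either some $r_m$, or a blow-up whose remaining chain restricts, at each point $\xi$ of the exceptional divisor, to a chain of length at most $h(f)-1$, since blow-ups whose centers miss a given point are local biholomorphisms there). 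The only deep input is the \emph{existence} of finite chains, delegated to the references. You instead try to manufacture $h$ as an explicit semicontinuous invariant that strictly drops under a single admissible step --- a much harder program, essentially re-proving resolution.

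The gap in your program is the reduction to two variables ``uniform in the parameters $\xx''$''. After Weierstrass preparation in $x_2$, the coefficients are germs in the $d-1$ variables $(x_1,\xx'')$ and can be arbitrarily singular in the $\xx''$-directions; a Newton polygon taken only in $(x_1,x_2)$ records none of this, and no sequence of blow-ups with center $\{x_1=x_2=0\}$ and ramifications in $x_1$ can repair it. Concretely, take $f=x_2^2-(x_1^3+x_3^3)$ in $d=3$: it is $x_2$-regular of order $2$, but $f\circ r_m=x_2^2-x_1^{3m}-x_3^3$ and, in the chart $b_\infty$, $f\circ b_\infty=v_2^2-v_3^3-v_1^3v_2^3$ still contain the untouched cusp in the $(x_2,x_3)$-directions; moreover the coefficient $x_1^3+x_3^3$ consists of three concurrent lines, which no single diffeomorphism can make normal crossings, so one is forced to blow up the center $\{x_1=x_3=0\}$ --- i.e.\ to resolve the coefficient as a problem in $d-1$ variables first. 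This is exactly the induction on the number of variables (Weierstrass preparation, Tschirnhausen, resolution of coefficient/discriminant data in $d-1$ variables, in the Bierstone--Milman scheme followed by \cite{RSW}) that your sketch omits entirely; your invariant, being a polygon in $(x_1,x_2)$ with $\xx''$ treated as passive parameters, cannot even register progress made in the $\xx''$-directions, so it does not strictly decrease along the genuine resolution. Even granting your unproved claims, the sketch establishes at best the classical case $d=2$. Two secondary inaccuracies: plane-curve blow-ups \emph{do} clear non-integral slopes (the cusp $x_2^2-x_1^3$ resolves with no ramification), so your ``either/or'' dichotomy misidentifies the role of $r_m$ (in \cite{Sum wrt germs} ramification enters to permit Puiseux-type factorization with parameters); and a strict drop at \emph{every} $\xi\in\PC$ simultaneously requires the invariant to account for the accumulated exceptional monomial, which your lexicographic datum does not record.
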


Observe that we can assume that $h(f)\leq h(f\cdot g)$, for all $f,g\in\mathcal{O}\setminus\{0\}$, by redefining $h$ such that $h(f)=N$ is the minimal number of blow ups and ramifications in any chain of diffeomorphisms, blow ups and ramifications reducing $f$ to normal crossing. Indeed, since any factors of a germ having normal crossings must have normal crossings, too, any chain of diffeomorphisms, blow ups and ramifications reducing $f\cdot g$ to normal crossing
also reduces $f$.

We will need the following lemmas on convergence and associated elements under ramifications or blow-ups, see e.g., \cite[p. 493]{MM}. {The proof of Lemma \ref{Associated elements plus blow up} follows the same lines as at the end of the proof of Lemma 2.2 in \cite{Sum wrt germs}.}

\begin{lema}\label{Convergence is preserved by blow-ups}
	For any $\hat{f}\in\hat{\mathcal{O}}$ the following assertions are equivalent:\begin{enumerate}\item $\hat{f}\in \mathcal{O},$
		
		\item There exists $m\geq 2$ such that $\hat{f}\circ r_m\in \mathcal{O}$,
		
		\item There exists $\xi\in\mathbb{P}^1_{\C}$ such that $\hat{f}\circ b_\xi \in  \mathcal{O}$.
	\end{enumerate}
\end{lema}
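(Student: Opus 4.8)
The plan is to establish the two easy implications $(1)\Rightarrow(2)$ and $(1)\Rightarrow(3)$ and then their converses $(2)\Rightarrow(1)$ and $(3)\Rightarrow(1)$, which together give all the equivalences. Write $\hat f=\sum_{\bb\in\N^d}a_{\bb}\xx^{\bb}$ and recall that $\hat f\in\mathcal O$ is equivalent to the existence of a polyradius $\s=(\s_1,\dots,\s_d)\in(\R^+)^d$ with $\sum_{\bb}|a_{\bb}|\,\s^{\bb}<\infty$. The implications $(1)\Rightarrow(2)$ and $(1)\Rightarrow(3)$ are immediate: $r_m$ and every $b_\xi$ (for $\xi\in\PC$) are polynomial maps fixing the origin, and the composition of a convergent germ with such a map is again convergent; in fact one obtains $(2)$ for all $m$ and $(3)$ for all $\xi$.

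For $(2)\Rightarrow(1)$ I would argue by direct coefficient comparison. Since
\[
\hat f\circ r_m(\tt)=\sum_{\bb}a_{\bb}\,t_1^{m\beta_1}t_2^{\beta_2}\cdots t_d^{\beta_d}
\]
and the map $\bb\mapsto(m\beta_1,\beta_2,\dots,\beta_d)$ is injective on $\N^d$, these monomials are pairwise distinct and the $a_{\bb}$ are exactly the Taylor coefficients of the convergent germ $\hat f\circ r_m$; hence a summability estimate $\sum_{\bb}|a_{\bb}|\s_1^{m\beta_1}\s_2^{\beta_2}\cdots\s_d^{\beta_d}<\infty$ shows that $\hat f$ converges on the polydisc of radii $(\s_1^{m},\s_2,\dots,\s_d)$. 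The same mechanism settles $(3)\Rightarrow(1)$ when $\xi=\infty$, where $\hat f\circ b_\infty(\boldsymbol{v})=\sum_{\bb}a_{\bb}\,v_1^{\beta_1}v_2^{\beta_1+\beta_2}(\boldsymbol{v}'')^{\bb''}$ and $\bb\mapsto(\beta_1,\beta_1+\beta_2,\bb'')$ is injective, so convergence transfers with the polyradius $(\s_1\s_2,\s_2,\s_3,\dots,\s_d)$.

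The only delicate case is $(3)\Rightarrow(1)$ for finite $\xi\in\C$, the genuinely new one being $\xi\neq0$: here injectivity fails because $(\xi+v_1)^{\beta_2}$ distributes one coefficient over many monomials. Setting $g:=\hat f\circ b_\xi$ and grouping
\[
g(\boldsymbol{v})=\sum_{\bb}a_{\bb}\,(\xi+v_1)^{\beta_2}v_2^{\beta_1+\beta_2}(\boldsymbol{v}'')^{\bb''}
\]
by the powers of $(v_2,\boldsymbol{v}'')$, the coefficient of $v_2^{\,n}(\boldsymbol{v}'')^{\bb''}$ is the polynomial $Q_{n,\bb''}(v_1)=\sum_{\beta_2=0}^{n}a_{n-\beta_2,\beta_2,\bb''}(\xi+v_1)^{\beta_2}$ of degree $\le n$ in $v_1$, where $\bb''=(\beta_3,\dots,\beta_d)$. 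As $g$ is holomorphic it is bounded, say by $M$, on a set $\{|v_1|\le r_1\}\times\{|v_2|=r_2\}\times\{|v_3|=r_3,\dots,|v_d|=r_d\}$ inside its domain of convergence, and the multivariable Cauchy estimates in $(v_2,\boldsymbol{v}'')$ give, uniformly for $|v_1|\le r_1$,
\[
|Q_{n,\bb''}(v_1)|\le M\,r_2^{-n}\prod_{l=3}^{d}r_l^{-\beta_l}.
\]
It then remains to recover the individual $a_{n-\beta_2,\beta_2,\bb''}$ from this sup-bound on the disc $|v_1|\le r_1$. Passing to $P(w):=Q_{n,\bb''}(w-\xi)=\sum_{\beta_2=0}^{n}a_{n-\beta_2,\beta_2,\bb''}\,w^{\beta_2}$, Cauchy's inequality on $|w-\xi|\le r_1$ bounds the Taylor coefficients of $P$ centered at $\xi$, and re-centering them to the origin via the binomial theorem controls each monomial coefficient $a_{n-\beta_2,\beta_2,\bb''}$ up to a factor $(n+1)K^{n}$ with $K=K(\xi,r_1)$. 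Since $n=\beta_1+\beta_2$, absorbing the subgeometric factor $(n+1)$ produces geometric bounds $|a_{\bb}|\le C\,L^{\beta_1+\beta_2}\prod_{l\ge3}r_l^{-\beta_l}$, i.e. $\hat f\in\mathcal O$.

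I expect the last step to be the main obstacle: transferring a uniform bound on the polynomials $Q_{n,\bb''}$ from a disc that is \emph{not} centered at the origin (when $\xi\neq0$) to bounds on their monomial coefficients. The binomial re-centering resolves it at the cost of the geometric factor $K^{n}$, which is harmless because it only competes against the already geometric decay in $n=\beta_1+\beta_2$; the case $\xi=0$ is in fact immediate ($K=1$). The variables $\xx''$ are untouched by the blow-up and are carried along as passive parameters, which is exactly why the multivariable Cauchy estimate lets one treat arbitrary $d$ in a single stroke.
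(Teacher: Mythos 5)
Your proposal is correct and follows essentially the same route as the paper: the easy implications are handled by composition and injective relabelling of monomials, and the only non-trivial implication $(3)\Rightarrow(1)$ is reduced to a geometric coefficient bound $|a_{\bb}|\leq C\,L^{\beta_1+\beta_2}\prod_{l\geq 3}r_l^{-\beta_l}$, which is exactly the estimate $|f_{\bb}|\leq CA^{|\bb|+\beta_2}$ that the paper asserts (with a pointer to \cite{MM}). Your Cauchy estimates in $(v_2,\boldsymbol{v}'')$ combined with the binomial re-centering of the degree-$n$ polynomials $Q_{n,\bb''}$ for $\xi\neq 0,\infty$ merely supply the details that the paper's one-line argument leaves implicit.
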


\begin{proof}The only non-trivial statement is (3) implies (1). Consider $\hat{f}=\sum_{\bb\in\N^d} f_\bb \xx^\bb$ and assume $\hat{f}\circ b_\xi \in  \mathcal{O}$ for some  $\xi\in\mathbb{P}^1_{\C}$. Then there are coordinates $\yy=(y_1,y_2,\yy'')$ such that $\sum_{(\beta_1,\beta_2,\bb'')\in\N^d} f_\bb y_1^{\beta_1+\beta_2}y_2^{\beta_2}(\yy'')^{\bb''}$ is convergent, i.e., $|f_\bb|\leq CA^{|\bb|+\beta_2}$ for some constants $C,A>0$. Thus $\hat{f}$ is clearly convergent.
\end{proof}

\begin{lema}\label{Associated elements plus blow up} Consider $f_1, f_2\in\mathcal{O}$. Then $f_2=U\cdot f_1$ for some $U\in\mathcal{O}^\ast$ if and only if one (and in fact, both) of the following situations hold:\begin{enumerate}
		\item There exists $m\in\N$, $m\geq2$ and $U_m\in\mathcal{O}^\ast$ such that $f_2 \circ r_m= U_m \cdot (f_1 \circ r_m)$.
		
		\item For every $\xi\in\mathbb{P}^1_{\C}$ there exists $U_\xi\in\mathcal{O}^\ast$ such that $f_2 \circ b_\xi= U_\xi \cdot (f_1 \circ b_\xi)$.
	\end{enumerate}
\end{lema}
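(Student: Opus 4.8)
The plan is to establish the two equivalences ``$f_2=U f_1$ for some $U\in\mathcal{O}^\ast$'' $\Leftrightarrow$ (1) and ``$f_2=U f_1$ for some $U\in\mathcal{O}^\ast$'' $\Leftrightarrow$ (2) separately; the assertion that (1) and (2) are themselves equivalent (so that one forcing both) is then automatic, since each is shown equivalent to the same intrinsic condition on $f_1,f_2$. The two forward implications are immediate. If $f_2=U f_1$ with $U(\00)\neq 0$, then composing with $r_m$ gives $f_2\circ r_m=(U\circ r_m)(f_1\circ r_m)$, and since $r_m(\00)=\00$ we have $(U\circ r_m)(\00)=U(\00)\neq 0$, so $U_m:=U\circ r_m\in\mathcal{O}^\ast$ and (1) holds. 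Likewise $b_\xi(\00)=\00$ for every $\xi\in\mathbb{P}^1_\C$ (including $\xi=\infty$), so $U_\xi:=U\circ b_\xi\in\mathcal{O}^\ast$ satisfies $f_2\circ b_\xi=U_\xi(f_1\circ b_\xi)$ and (2) holds.

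For (1)$\Rightarrow$``$f_2=Uf_1$'' I would argue by invariant theory of a cyclic group. Let $\zeta=e^{2\pi i/m}$ and let $\Z/m\Z$ act on $\mathcal{O}$ by $(\rho\cdot F)(\tt)=F(\zeta t_1,\tt')$. The homomorphism $r_m^\ast\colon\mathcal{O}\to\mathcal{O}$, $F\mapsto F\circ r_m$, is injective, and its image is exactly the invariant subring, namely the series in which $t_1$ occurs only with exponents divisible by $m$. Now $f_1\circ r_m$ and $f_2\circ r_m$ are invariant, hence so is their quotient $U_m=(f_2\circ r_m)/(f_1\circ r_m)$ viewed as a meromorphic germ; being holomorphic by hypothesis, $U_m$ lies in the invariant subring $r_m^\ast\mathcal{O}$, so $U_m=U\circ r_m$ for a unique $U\in\mathcal{O}$, with $U(\00)=U_m(\00)\neq 0$. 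Then $f_2\circ r_m=(U\circ r_m)(f_1\circ r_m)=(Uf_1)\circ r_m$, and injectivity of $r_m^\ast$ yields $f_2=Uf_1$ with $U\in\mathcal{O}^\ast$.

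The substantive step is (2)$\Rightarrow$``$f_2=Uf_1$''. Here I would work with the meromorphic germ $g=f_2/f_1$ in the fraction field of $\mathcal{O}$. By hypothesis, for each $\xi$ the pullback $g\circ b_\xi=U_\xi$ is holomorphic near the origin of the chart $\phi_\xi$, which corresponds to the point $\xi$ of the fibre $b^{-1}(\00)\cong\mathbb{P}^1_\C$. Letting $\xi$ range over all of $\mathbb{P}^1_\C$, these neighbourhoods cover the compact fibre $b^{-1}(\00)$, so finitely many of them cover it; since $b$ is proper, there is a small polydisc $P$ about $\00$ with $b^{-1}(P)$ contained in their union. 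On $b^{-1}(P)$ the function $g\circ b$ is therefore holomorphic, and after shrinking $P$, bounded. As $b$ restricts to a biholomorphism $M\setminus E\to\C^d\setminus\{x_1=x_2=0\}$ and is surjective, it follows that $g$ is bounded on $P\setminus\{f_1=0\}$; Riemann's extension theorem then extends $g$ to a holomorphic $\tilde g$ on $P$, and $f_2=\tilde g\,f_1$ holds off $\{f_1=0\}$, hence everywhere by continuity. Finally, evaluating $g\circ b=U_\xi$ at a point of $b^{-1}(\00)$ gives $\tilde g(\00)=U_\xi(\00)\neq 0$, so $\tilde g\in\mathcal{O}^\ast$.

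The main obstacle I anticipate is precisely this last descent across the blow-up: one must pass from germ-wise holomorphy of $g\circ b$ at each point of the fibre to holomorphy on a genuine neighbourhood $b^{-1}(P)$, which requires the compactness of $b^{-1}(\00)$ together with the properness of $b$, and then invoke a Riemann/Hartogs-type removable-singularity argument across the hypersurface $\{f_1=0\}$ (whose intersection with the center has codimension two). By contrast, the ramification direction is purely algebraic, through the invariant subring, and presents no analytic difficulty.
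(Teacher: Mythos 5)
Your proof is correct, and its overall architecture matches the paper's: the forward implications by composition, the ramification direction via invariance under the rotation $\boldsymbol{t}\mapsto(e^{2\pi i/m}t_1,\boldsymbol{t}')$ (with the image of $r_m^\ast$ identified as the invariant subring, which the paper uses implicitly), and, for the blow-up direction, a gluing of the local units over the compact exceptional fibre followed by a descent to the base. The one place you genuinely deviate is the descent mechanism. The paper glues the functions $u_\xi=U_\xi\circ\phi_\xi$ into a holomorphic $u$ on a neighbourhood of $\mathbb{P}^1_\C\times\{\boldsymbol{0}\}\subseteq M$, observes that $u$ is \emph{constant} on the compact fibre, and then obtains $U$ with $U\circ b=u$ by Hartogs' theorem, extending across the codimension-two centre. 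You instead work throughout with the meromorphic quotient $g=f_2/f_1$, make the uniformization explicit via properness of $b$ (finite subcover of $b^{-1}(\boldsymbol{0})$ plus closedness of $b$ to fit $b^{-1}(P)$ inside the union of chart neighbourhoods), and descend by boundedness together with Riemann's removable-singularity theorem across the analytic set $\{f_1=0\}\cup\{x_1=x_2=0\}$. Both descents rest on the same two pillars, compactness of the exceptional fibre and a removable-singularity principle, so the difference is one of implementation rather than of idea: your route never needs to construct $U$ upstairs and makes explicit the properness step that the paper leaves implicit in the phrase ``some neighbourhood of $\mathbb{P}^1_\C\times\{\boldsymbol{0}\}$'', while the paper's route is marginally shorter because constancy of $u$ on the compact $\mathbb{P}^1_\C$ makes the blow-down immediate. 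One small point, common to both arguments and handled by the paper in its opening sentence: you should dispose of the degenerate case $f_1\equiv 0$ (which forces $f_2\equiv 0$, so $U=1$ works) at the outset, since your density and identity-theorem steps, e.g.\ the identification of $g\circ b_\xi$ with $U_\xi$ and the gluing of the local units on overlaps, all presuppose $f_1\not\equiv 0$.
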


\begin{proof}Assume $f_1$ and $f_2$ are not identically zero. If (1) is true, then the function $U_m$ is invariant under right composition with the rotation $\boldsymbol{t}\mapsto (e^{2\pi i/m}t_1,\boldsymbol{t}')$, since the functions $f_j\circ r_m$, $j=1,2$ are invariant. Then $U_m=U\circ r_m$ for some $U\in\mathcal{O}^\ast$ as we wanted to show. 
	
	If (2) holds, then for every $\xi\in\PC$ there exists 
	an open neighborhood $\Omega_\xi'$ of $\00\in\C^d$, where $f_1\circ b_\xi$, $f_2\circ b_\xi$ and $U_\xi$ are defined. We consider the open neighborhoods $\Omega_\xi=\phi_\xi^{-1}(\Omega_\xi')$ of
	$(\xi,\00)\in M$ and the holomorphic function  $u_\xi:\Omega_\xi\to\C$ defined by $u_\xi=U_\xi\circ \phi_\xi$. 
	By definition, for $\xi,\zeta\in\PC$, we have
	$u_\xi (p)= u_\zeta (p)$ for $p\in \Omega_\xi\cap \Omega_\zeta$ with $f_1(b(p))\neq 0$.
	Since $f_1$ is not identically zero, this means that $u_\xi$ and $u_\zeta$ coincide on an open and dense subset
	of $\Omega_\xi\cap \Omega_\zeta$. Therefore $u_\xi=u_\zeta$ on this intersection and thus all
	$u_\xi$, $\xi\in\PC$, define a holomorphic function $u:\Omega\rightarrow \C$,
	where $\Omega$ is some neighborhood of $\PC\times \{ \00 \}\subseteq M$. 
	$\PC$ being compact, $u$ is constant over it, so there exists a holomorphic $U:\Omega'\rightarrow \C$,
	$\Omega'$ a neighborhood of $\00\in \C^d$, such that $U\circ b= u$ (apply Hartogs' Theorem).
	By construction, $U\in\OO^\ast$ and we have $f_2\circ b_\xi = (U\circ b_\xi)\cdot (f_1\circ b_\xi )$, for all $\xi\in\PC$. Thus we obtain that  $f_2= U\cdot f_1$, as desired.
\end{proof}

Finally, we will also make use of the monomial transformations $$\pi_{ij}:\C^d\rightarrow\C^d,\quad \pi_{ij}(\xx)=(x_1,\dots,\underbrace{x_ix_j}_{j \text{th entry}},\dots,x_d),\quad i,j=1,\dots,d, i\neq j,$$ that correspond to the charts of the blow-up with center $\{x_i=x_j=0\}$ and can be obtained from $b_0, b_\infty$ after permutations of the coordinates. We will call a \textit{monomial blow-up} a finite composition of these monomial transformations.

We will work with the partial order on $\R^d$ defined by $\boldsymbol{a}\leq \boldsymbol{b}$ if $a_j\leq b_j$, for all $j=1,\dots,d$. In particular $\boldsymbol{a}\not\leq \boldsymbol{b}$ if $a_j>b_j$ for some $j$. We will also write $\boldsymbol{a}< \boldsymbol{b}$ if $a_j<b_j$, for all $j=1,\dots,d$.

We consider $\Lambda_d:=(\N^d\setminus \{\00\})\times \R^+/\sim$ where $\sim$ is the equivalence relation defined by $(\aa,1/k)\sim (\aa',1/k')$ if $k\aa=k'\aa'$. {To simplify notations we identify an element $(\aa,1/k)$ and its equivalence class}. On $\Lambda_d$ we consider the partial order $\preceq$ given by $(\aa,1/k)\preceq(\aa',1/k')$ if $k\aa\leq  k'\aa'$. We will also write $(\aa,1/k)\prec(\aa',1/k')$ if  $k\aa < k'\aa'$.

The pull-back of $\xx^\aa$ under $\pi_{ij}$ is given by $\pi_{ij}^\ast(\xx^\aa)=\xx^{\aa}x_i^{\a_j}$, for any $\aa\in\N^d$. We will also denote by  $\pi_{ij}^\ast:\Lambda_d\rightarrow \Lambda_d$ the map given by $\pi_{ij}^\ast(\aa,1/k):=(\aa+\a_j e_i,1/k)$. Here $e_i$  is the $i$th vector of the canonical base of $\C^d$. We use analogous notations for any monomial blow-up $\pi:\C^{d}\rightarrow \C^{d}$.

\begin{nota}\label{Note on <}If the entries of $\aa'$ are nonzero, then $(\aa,1/k)\prec(\aa',1/k')$ if and only if $$\max_{1\leq j\leq d}\{\a_{j}/\a_{j}'\}<k'/k.$$
\end{nota}

We note $(\Lambda_d,\preceq)$ is not a totally-ordered set. However given a finite subset of it we can always apply an adequate monomial blow-up to obtain a totally-ordered set, as the following lemma shows.

\begin{lema}\label{Ordered monomials} Let $\{(\aa_i,1/k_i)\}_{i=1}^n\subset\Lambda_d$ be given. Then there is a monomial blow-up $\pi$ such that $\{\pi^\ast(\aa_i,1/k_i)\}_{i=1}^n$ is totally ordered w.r.t. $\prec$ and the entries of all the new monomials are different from zero.
\end{lema}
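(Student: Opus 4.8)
The plan is to translate the action of monomial blow-ups on $\Lambda_d$ into elementary operations on vectors and then to build $\pi$ in three explicit stages. Set $\bb_i:=k_i\aa_i\in\R^d$; these vectors are nonnegative, nonzero, and \emph{pairwise distinct}, the last point because $(\aa,1/k)\sim(\aa',1/k')$ means exactly $k\aa=k'\aa'$, so distinct elements of $\Lambda_d$ give distinct $\bb_i$. Everything we need is read off these vectors: $(\aa_i,1/k_i)\prec(\aa_j,1/k_j)$ iff $\bb_i<\bb_j$ coordinatewise, and $(\aa_i,1/k_i)$ has all entries nonzero iff $\bb_i$ does. The generator $\pi_{ij}^\ast$ fixes $1/k$ and sends $\bb\mapsto \bb+\b_j e_i$, i.e.\ it adds the $j$-th coordinate to the $i$-th. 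In particular each generator has nonnegative entries and maps a strictly positive vector to a strictly positive one, so once full support is attained it is never lost; it remains to produce, by a finite composition of such moves, vectors that are strictly positive and pairwise strictly comparable.

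\emph{Stage 1 (full support).} First apply $\pi_{1m}^\ast$ for $m=2,\dots,d$; this replaces the first coordinate of every $\bb_i$ by $\sum_{l=1}^{d}\b_{i,l}=|\bb_i|>0$ and leaves the others unchanged. Then apply $\pi_{l1}^\ast$ for $l=2,\dots,d$, adding the now positive first coordinate to each of the others. After this composition every $\bb_i$ is strictly positive, and the $\bb_i$ stay distinct because each move is invertible.

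\emph{Stage 2 (separate a coordinate).} Next I make the first coordinates $\b_{i,1}$ pairwise distinct. Applying $(\pi_{1m}^\ast)^{N_m}$ for $m=2,\dots,d$ turns the first coordinate of $\bb_i$ into $\ell(\bb_i)=\b_{i,1}+\sum_{m=2}^{d}N_m\b_{i,m}$ and does not touch the other (positive) coordinates. Choosing $N_m=\Lambda^{m}$, the difference $\ell(\bb_i)-\ell(\bb_j)$ is a polynomial in $\Lambda$ whose coefficients are not all zero (since $\bb_i\neq\bb_j$), hence nonzero for all large $\Lambda$; taking $\Lambda$ beyond the finitely many roots over all pairs makes $\ell$ injective on $\{\bb_i\}$, so all first coordinates become distinct while full support is preserved. \emph{Stage 3 (amplify into a total order).} Finally apply $(\pi_{l1}^\ast)^{N}$ for $l=2,\dots,d$ with $N$ a large integer. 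This leaves the first coordinate of each $\bb_i$ unchanged and replaces the $l$-th coordinate ($l\geq 2$) by $\b_{i,l}+N\b_{i,1}$. For a pair with, say, $\b_{i,1}<\b_{j,1}$, the first coordinates already satisfy the strict inequality, and for $l\geq 2$ the quantity $(\b_{i,l}-\b_{j,l})+N(\b_{i,1}-\b_{j,1})$ is negative once $N$ exceeds a bound depending only on that pair. Taking $N$ larger than these finitely many bounds gives $\bb_i<\bb_j$ coordinatewise for every pair, i.e.\ the images are pairwise strictly comparable and (by Stage 1) still of full support. Composing the three stages yields the desired monomial blow-up $\pi$; since coordinatewise order is antisymmetric and transitive, pairwise comparability is exactly a total ordering with respect to $\prec$.

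The only genuinely delicate point I expect is Stage 2: one must force a single coordinate to take $n$ distinct values using moves of a fixed type, and the reason the whole scheme closes up is structural, namely that the amplifying moves of Stage 3 add the first coordinate to the others and hence never disturb that distinguishing first coordinate, so the separation obtained in Stage 2 survives and is promoted to a full coordinatewise order.
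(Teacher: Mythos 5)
Your proof is correct, but it takes a genuinely different route from the paper's. The paper argues by local, pairwise repair: after making all entries nonzero (which it declares straightforward), it takes any non-comparable pair $\bb_i,\bb_j$, picks coordinates $l\neq m$ with $\b_{i,l}<\b_{j,l}$ and $\b_{i,m}>\b_{j,m}$, and applies $\pi_{ml}^{\circ N}$ with $N>\frac{\b_{i,m}-\b_{j,m}}{\b_{j,l}-\b_{i,l}}$ to remove that inversion, observing that comparability, once achieved for a pair, is preserved by all further monomial blow-ups (the maps $\bb\mapsto \bb+\b_q e_p$ are monotone), so finitely many such steps suffice. You instead build one global composition: after your Stage~1 (essentially the paper's normalization step, but made explicit), you encode the whole family into a single coordinate via $\ell(\bb)=\b_1+\sum_{m\geq 2}\Lambda^m\b_m$ with $\Lambda$ generic --- in effect choosing a linear form injective on the $\bb_i$ --- and then amplify that distinguished coordinate into all the others. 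Your version buys an explicit a priori description of $\pi$, no termination argument, and automatic strictness: Stages~2--3 yield $\bb_i<\bb_j$ in \emph{every} coordinate, whereas the paper's proof is phrased in terms of the weak order $\leq$ and leaves implicit the tie-breaking needed for the strict conclusion $\prec$ (a tie $\b_{i,l}=\b_{j,l}$ in a comparable pair is removed by one further $\pi_{lm}$, but this is not spelled out there). The paper's version buys brevity and a local mechanism that disturbs as little as possible. The individual steps of your argument all check out: $\pi_{ij}^\ast$ acts on $\bb=k\aa$ by $\bb\mapsto\bb+\b_je_i$, your Stage~1 produces strictly positive vectors while preserving distinctness (each elementary move is invertible), the polynomial-in-$\Lambda$ separation is sound because distinct $\bb_i$ give a not-identically-zero polynomial, and Stage~3 fixes the first coordinate, so the separation obtained in Stage~2 indeed survives.
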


\begin{proof}It is straightforward to see that using the transformations $\pi_{ij}$ we may assume the entries of the monomials are not zero.
Then the problem is equivalent to order w.r.t. $<$ the vectors  $\{\bb_1,\dots,\bb_n\}\subseteq (\R^+)^d$, where $\bb_j=(\b_{j,1},\dots,\b_{j,d})=k_j\aa_j$.	

We say that $\bb_i$ and $\bb_j$ are comparable if $\bb_i\leq \bb_j$ or $\bb_j\leq \bb_i$. If $\bb_i$ and $\bb_j$ are not comparable, there exist indices $l\neq m$ such that $\b_{i,l}<\b_{j,l}$ and $\b_{i,m}>\b_{j,m}$. After the monomial blow-up $\pi_{ml}^{\circ N}$ with $N>\frac{\b_{i,m}-\b_{j,m}}{\b_{j,l}-\b_{i,l}},$ we obtain new vectors $\bb_i'$, $\bb_j'$ with $\b_{i,l}'=\b_{i,l}<\b_{j,l}=\b_{j,l}'$ and $\b_{i,m}'<\b_{j,m}'$. Then after a finite number of such transformations, we obtain comparable vectors. Note that if two vectors are comparable, further monomial blow-ups preserve this property. \end{proof}

\begin{nota}If all $k_i/k_j\in\Q$, i.e., $k_i/k_1=p_i/p_1$ for some integers $p_i$, take $\lambda=p_1/k_1=p_i/k_i$. Then $(\aa_i,1/k_i)\sim (p_i\aa_i, \lambda)$. In this case the problem is equivalent to monomialize the polynomial $$\prod_{i=1}^d \xx^{p_i\aa_i} \cdot \prod_{i<j} \left(\xx^{p_i\aa_i}-\xx^{p_j\aa_j}\right),$$ see e.g., \cite{BM}. Let us note that this is a particular case of a toric ideal (generated by products of binomials). In this case, reduction of singularities and monomialization turns out to be much simpler than in general cases. In particular, it is combinatorial, without the need of using diffeomorphisms during the process.
\end{nota}

We can extend the previous equivalence relation to germs of analytic functions other than monomials as follows. Consider $P_0,P_1\in\mathcal{O}\setminus\{0\}$, $P_0(\00)=P_1(\00)=0$ and $k_0, k_1>0$. We will write $(P_0,1/k_0)\sim (P_1,1/k_1)$ if we can find $p_0, p_1\in\N^+, U\in\mathcal{O}^\ast$ such that $$p_0/k_0=p_1/k_1,\quad \PP_0^{p_0}= U\cdot \PP_1^{p_1}.$$ This equivalence relation is preserved under ramifications and blow-ups as the following lemma shows.

\begin{lema}\label{Equivalence relation and blow ups}
	Consider $P_0,P_1\in\mathcal{O}\setminus\{0\}$, $P_0(\00)=P_1(\00)=0$ and $k_0, k_1>0$. Then $(P_0,1/k_0)\sim (P_1,1/k_1)$ if and only if one of the following statements hold:
	\begin{enumerate}
		\item There exists $m\in\N$, $m\geq2$ such that $(P_0\circ r_m,1/k_0)\sim (P_1\circ r_m,1/k_1)$.
		
		\item For every $\xi\in\mathbb{P}^1_{\C}$, $(P_0\circ b_\xi,1/k_0)\sim (P_1\circ b_\xi,1/k_1)$.
	\end{enumerate}
\end{lema}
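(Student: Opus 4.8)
The plan is to reduce everything to Lemma \ref{Associated elements plus blow up} applied to the powers $P_0^{p_0}$ and $P_1^{p_1}$, once the exponents are made uniform. Recall that by definition $(P_0,1/k_0)\sim(P_1,1/k_1)$ means there are $p_0,p_1\in\N^+$ and $U\in\mathcal{O}^\ast$ with $p_0/k_0=p_1/k_1$ and $P_0^{p_0}=U\cdot P_1^{p_1}$; the second condition says precisely that $P_0^{p_0}$ and $P_1^{p_1}$ are associated in $\mathcal{O}$. The two facts I would use throughout are that composition with $r_m$ or $b_\xi$ commutes with taking powers, $(P_j^{p})\circ r_m=(P_j\circ r_m)^{p}$ and likewise for $b_\xi$, and that both $r_m$ and every $b_\xi$ (including $\xi=\infty$) fix $\00$, so they send $\mathcal{O}^\ast$ into $\mathcal{O}^\ast$.

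For the forward implication I would simply compose the defining identity $P_0^{p_0}=U\cdot P_1^{p_1}$ with $r_m$, respectively with $b_\xi$ for each $\xi\in\PC$. Using the observations above this yields $(P_0\circ r_m)^{p_0}=(U\circ r_m)\cdot(P_1\circ r_m)^{p_1}$ with $U\circ r_m\in\mathcal{O}^\ast$ and the \emph{same} pair $p_0,p_1$; since $p_0/k_0=p_1/k_1$ is unchanged, this gives $(P_0\circ r_m,1/k_0)\sim(P_1\circ r_m,1/k_1)$, establishing (1), and the identical argument for every $b_\xi$ establishes (2).

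For the converse from (1) the exponents are already uniform: the hypothesis provides a single $m$, a single pair $p_0,p_1$ with $p_0/k_0=p_1/k_1$, and $V\in\mathcal{O}^\ast$ with $(P_0\circ r_m)^{p_0}=V\cdot(P_1\circ r_m)^{p_1}$, i.e.\ $(P_0^{p_0})\circ r_m=V\cdot(P_1^{p_1})\circ r_m$. Applying Lemma \ref{Associated elements plus blow up}, statement (1), to $f_2=P_0^{p_0}$ and $f_1=P_1^{p_1}$ produces $U\in\mathcal{O}^\ast$ with $P_0^{p_0}=U\cdot P_1^{p_1}$, and together with $p_0/k_0=p_1/k_1$ this is exactly $(P_0,1/k_0)\sim(P_1,1/k_1)$.

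The converse from (2) is the delicate point, since a priori the exponents $p_0^\xi,p_1^\xi$ furnished by the hypothesis depend on $\xi$; this uniformization across all charts is where I expect the main obstacle. Their ratio, however, does not depend on $\xi$: from $p_0^\xi/k_0=p_1^\xi/k_1$ we get $p_0^\xi/p_1^\xi=k_0/k_1$ for all $\xi$, which in particular forces $k_0/k_1\in\Q$; writing $k_0/k_1=a/b$ in lowest terms ($a,b\in\N^+$) gives $p_0^\xi=a\,n_\xi$ and $p_1^\xi=b\,n_\xi$ for some $n_\xi\in\N^+$. The hypothesis then reads $\big((P_0\circ b_\xi)^{a}\big)^{n_\xi}=V_\xi\cdot\big((P_1\circ b_\xi)^{b}\big)^{n_\xi}$. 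Since $\mathcal{O}=\C\{\xx\}$ is a unique factorization domain, an identity $f^{n}=V g^{n}$ with $V\in\mathcal{O}^\ast$ and $f,g\neq 0$ forces $f=W g$ for some $W\in\mathcal{O}^\ast$ (compare prime factorizations), so for every $\xi$ there is $U_\xi\in\mathcal{O}^\ast$ with $(P_0\circ b_\xi)^{a}=U_\xi\cdot(P_1\circ b_\xi)^{b}$, that is $(P_0^{a})\circ b_\xi=U_\xi\cdot(P_1^{b})\circ b_\xi$ for all $\xi$. Now Lemma \ref{Associated elements plus blow up}, statement (2), applied to $f_2=P_0^{a}$ and $f_1=P_1^{b}$ yields $U\in\mathcal{O}^\ast$ with $P_0^{a}=U\cdot P_1^{b}$; and $a/k_0=b/k_1$ holds by the choice of $a,b$, so $(P_0,1/k_0)\sim(P_1,1/k_1)$. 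The clean resolution of the $\xi$-dependence is thus the root-extraction step $f^{n}\sim g^{n}\Rightarrow f\sim g$, which lets us pass to the minimal exponents $a,b$ that work simultaneously in every chart.
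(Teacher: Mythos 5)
Your proposal is correct and follows essentially the same route as the paper: the forward direction by composing the defining identity, the converse from (1) by applying Lemma \ref{Associated elements plus blow up}(1) to $P_0^{p_0}$ and $P_1^{p_1}$, and the converse from (2) by writing $k_0/k_1$ in lowest terms so that all chart-dependent exponents become multiples of a single pair, then invoking Lemma \ref{Associated elements plus blow up}(2). Your only addition is to make explicit (via unique factorization in $\mathcal{O}$) the root-extraction step $f^{n}=Vg^{n}\Rightarrow f=Wg$, which the paper uses implicitly when it asserts $(P_0\circ b_\xi)^{p_0}/(P_1\circ b_\xi)^{p_1}\in\mathcal{O}^\ast$.
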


\begin{proof}If $(P_0,1/k_0)\sim (P_1,1/k_1)$, both  conditions (1) and $(2)$ clearly hold. Conversely, if (1) holds, we can find $p_{0}, p_{1}\in\N^+$ and $U_m\in\mathcal{O}^\ast$, such that $$p_{0}/k_0=p_{1}/k_1,\quad (P_0\circ r_m)^{p_{0}}=U_m\cdot  (\PP_1\circ r_m)^{p_{1}}.$$ Then the relation $(P_0,1/k_0)\sim (P_1,1/k_1)$ follows from Lemma \ref{Associated elements plus blow up}. Now assume that (2) holds. By definition we can find $p_{0,\xi}, p_{1,\xi}\in\N^+$ and $U_{\xi}\in\mathcal{O}^\ast$, such that $$p_{0,\xi}/k_0=p_{1,\xi}/k_1,\quad (P_0\circ b_\xi)^{p_{0,\xi}}=U_{\xi}\cdot  (\PP_1\circ b_\xi)^{p_{1,\xi}}.$$ Now write  $k_0/k_1=p_0/p_1$ where $p_0,p_1\in\N^+$ and $(p_0,p_1)=1$. Then for each $\xi\in\mathbb{P}_\C^1$ we can find $m_\xi\in \N^+$ such that $p_{0,\xi}=m_\xi p_0$ and $p_{1,\xi}=m_\xi p_1$. We conclude that $(\PP_0\circ b_\xi)^{p_0}/(\PP_1\circ b_\xi)^{p_1}\in \mathcal{O}^\ast$ for each $\xi\in\mathbb{P}_\C^1$. Then again by Lemma \ref{Associated elements plus blow up} there is $U\in \mathcal{O}^\ast$ such that $\PP_0^{p_0}=U\cdot \PP_1^{p_1}$, i.e., $(P_0,1/k_0)\sim (P_1,1/k_1)$.
\end{proof}

\section{Asymptotic expansions in an analytic germ}\label{Asymptotic expansions and summability wrt an analytic germ}

The goal of this section is to present a summary on asymptotic expansion in a germ of analytic function, based on \cite{Sum wrt germs}. We have not included proofs, except for Lemma \ref{Vandermonde}. The aim here is only to establish the necessary background to be able to state and prove the tauberian properties in Section \ref{Tauberian properties for summability}.

Usual domains in $\C$ where holomorphic maps admit an asymptotic expansion are sectors at e.g., the origin. We will denote them as $$V(a,b;r):=\{t\in\C \hspace{0.1cm}|\hspace{0.1cm} 0<|t|<r, a<\text{arg}(t)<b\}=S(\theta,b-a;r),$$ emphasizing on its \textit{bisecting direction} $\theta=(b+a)/2$,  \textit{opening} $b-a$ and radius $0<r\leq\infty$. A subsector of $S$ is simply $S'=S(\theta',b'-a';r')$ where $a<a'<b'<b$, $0<r'<r$. The disk centered at the origin with radius $r>0$ will be denoted by $D_r=\{t\in\C \hspace{0.1cm}|\hspace{0.1cm} |t|<r\}$.

We fix a Banach space $(E,\|\cdot \|)$. We will use the notation $\mathcal{O}(\Omega, E)$ and $\mathcal{O}_b(\Omega, E)$ for the space of holomorphic and holomorphic and bounded $E$-valued maps defined on an open set $\Omega\subseteq \C^d$.

Let $S$ be a sector, $f\in\mathcal{O}(S, E)$ and let $\hat{f}=\sum_{n=0}^{\infty} a_n t^n\in E[[t]]$ be its \textit{asymptotic expansion} on $S$ (written $f\sim \hat{f}$ as $S\ni t\rightarrow 0$), i.e.,  for each subsector $S'$ and each $N\in\N$, there exists $C_N(S')>0$ such that \begin{equation}\label{def asym classic}
\left\|f(t)-\sum_{n=0}^{N-1} a_n t^n\right\|\leq C_N(S')|t|^N, \hspace{0.3cm} \text{ on } S'.
\end{equation}

The asymptotic expansion is said to be of \textit{$s$--Gevrey type} ($s>0$ and written $f\sim_s \hat{f}$ as $S\ni t\rightarrow 0$) if we can choose $C_N(S')=C(S')A(S')^N N!^s$, for some $C(S'), A(S')$ independent of $N$. In this case we conclude that $\hat{f}\in E[[t]]_s$ is a $s$-\textit{Gevrey series in $t$}, i.e., there are constants $B,D>0$ such that $\|a_n\|\leq BD^n n!^s$, for all $n\in\N$.

\begin{nota}\label{Lema polinomios} To have $f\sim_s\hat{f}$ as $S\ni t\rightarrow 0$ it is actually sufficient to have inequalities (\ref{def asym classic}) only for the values $N=Mp$, where $M\in \N^+$ is fixed. The reader may check this assertion with the aid of the following lemma that we will use later.
\end{nota}

\begin{lema}\label{Vandermonde} Let $V=V(a,b;r)$ be a sector, $0<\rho<r$ and $M$ be a positive integer. There is a constant $C_V(\rho,M)$ with the following property:
If $H(t)=a_0+a_1t+\cdots+a_Mt^{M-1}\in E[t]$ is a polynomial and $K:V\rightarrow \R^+$ is a map such that $\left\|H(t)\right\|\leq K(|t|)$, for all $t\in V$, then $\|a_j\|\leq C_V(\rho,M) K(\rho)$, for all $j.$
\end{lema}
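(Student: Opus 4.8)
The plan is to exploit that a polynomial of degree at most $M-1$ is determined by its values at $M$ distinct points, together with the fact that the sector $V$ meets the circle $|t|=\rho$ in an arc of positive length. First I would fix $M$ distinct evaluation points on that arc: since $a<b$, the set $\{\rho e^{i\theta}\mid a<\theta<b\}$ is contained in $V$ (because $0<\rho<r$) and is infinite, so one may choose $t_1,\dots,t_M\in V$ with $|t_i|=\rho$ and $t_i\neq t_\ell$ for $i\neq\ell$. These points depend only on $V$, $\rho$ and $M$, and in particular not on $H$ or $K$.

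Next, evaluating $H$ at these points gives the linear system $H(t_i)=\sum_{j=0}^{M-1}a_j\,t_i^{\,j}$ for $i=1,\dots,M$, whose coefficient matrix is the Vandermonde matrix $W=(t_i^{\,j})_{1\le i\le M,\,0\le j\le M-1}$. Since the $t_i$ are pairwise distinct, $\det W=\prod_{1\le i<\ell\le M}(t_\ell-t_i)\neq 0$, so $W$ is invertible and $W^{-1}$ is a fixed complex $M\times M$ matrix determined only by the chosen nodes. Writing $w_{ji}$ for the entries of $W^{-1}$, I would then recover each coefficient as $a_j=\sum_{i=1}^M w_{ji}\,H(t_i)$; this is valid coefficientwise in the Banach space $E$ since $W^{-1}$ has scalar entries.

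Finally, because each $t_i$ lies on $|t|=\rho$, the hypothesis $\|H(t)\|\le K(|t|)$ yields $\|H(t_i)\|\le K(\rho)$, and hence $\|a_j\|\le\big(\sum_{i=1}^M|w_{ji}|\big)K(\rho)$. Setting $C_V(\rho,M):=\max_{0\le j\le M-1}\sum_{i=1}^M|w_{ji}|$ gives the claim, with a constant depending only on $V$, $\rho$ and $M$ through the choice of nodes. The argument is essentially linear algebra and presents no serious obstacle; the only point requiring a moment's care is the very first one, namely that the positive opening $b-a>0$ of the sector supplies enough distinct evaluation points on the circle of radius $\rho$, which is exactly what makes the Vandermonde matrix invertible and the resulting constant uniform over all admissible $H$ and $K$.
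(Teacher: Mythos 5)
Your proposal is correct and follows essentially the same route as the paper: the paper also evaluates $H$ at $M$ distinct points of modulus $\rho$ in $V$, inverts the resulting Vandermonde matrix $G$, and bounds the coefficients via $\|a_j\|\leq M\|G^{-1}\|_1 K(\rho)$, which differs from your entrywise bound $\sum_i |w_{ji}|$ only in the bookkeeping of the matrix norm. Nothing further is needed.
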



\begin{proof}Take $t_0, t_1,\dots,t_{M-1}$ distinct points in $V$ with $|t_j|=\rho$  and let $G=(t_i^j)_{0\leq i,j\leq M-1}$ the corresponding Vandermonde matrix. On $E^M$ consider the norm $\|(z_1,\dots,z_{M})^t\|_1:=\sum_{j=1}^{M} \|z_j\|$ and on $\C^{M\times M}$ the corresponding matrix norm $\|A\|_1=\sup_{v\in E^M\setminus\{0\}} \|Av\|_1/\|v\|_1$ . Then from our hypothesis we see that $$\|a_{j}\|\leq \|(a_{0},\dots,a_{M-1})^t\|_1=\|G^{-1}G(a_{0},\dots,a_{M-1})^t\|_1\leq\|G^{-1}\|_1\sum_{i=1}^M\|H(t_i)\|\leq  M\|G^{-1}\|_1 K(\rho),$$ as required.\end{proof}

A key point to generalize asymptotic expansions in a germ is the following: $f\sim \hat{f}$ as $S\ni t\rightarrow 0$ if and only if  there exists $(f_N)_{N\in\N}\subset\mathcal{O}_b(D_R, E) $ such that for all subsectors $S'$ and $N\in\N$ there are constants $C_N(S')>0$ such that \begin{equation*}\label{def asym classic with sequence of function}\|f(t)-f_N(t)\|\leq C_N(S')|t|^N, \hspace{0.3cm}\text{ on }S'\cap D_R.\end{equation*} In the case $f\sim_s\hat{f}$ as $S\ni t\rightarrow 0$ we also require that $C_N(S')=C(S')A(S')^N N!^s$ and $\|f_N(t)\|\leq DB^N N!^s$, for all $|t|\leq R$ and $N\in\N$, for some constants $C(S'),A(S'),B,D>0$. In any case, the series $\hat{f}$ is completely determined by $f$ since $a_n=\lim_{S'\ni t\rightarrow 0} \frac{f^{(n)}(t)}{n!}$, for any subsector $S'$, and it is given by the limit of the Taylor series at the origin of the $f_n$, in the $\mathfrak{m}$-topology of $E[[t]]$, $\mathfrak{m}=(t)$.

For several variables, we use the notation $\hat{\mathcal{O}}(E)=E[[\xx]]$  and $\mathcal{O}(E)=E\{\xx\}$  for the space of formal and convergent power series in $\xx$ with coefficients in $E$, respectively. For any $\boldsymbol{r}=(r_1,\dots,r_d)\in(\R^+)^d$, $D_{\boldsymbol{r}}=\{\xx\in \C^d \hspace{0.1cm}|\hspace{0.1cm} |x_j|<r_j, j=1,\dots,d\}$ will denote the polydisk centered at the origin with polyradius $\boldsymbol{r}$. If $r_j=r$, for all $j$, we will write the Cartesian product $D_r^d$ instead. We denote by $J:\mathcal{O}(D_{\boldsymbol{r}},E)\rightarrow \mathcal{O}(E)$ the Taylor map assigning to a function its Taylor series at the origin.

Given $\boldsymbol{\a}\in\N^{d}\setminus \{\00\}$, any 	power series  $\hat{f}=\sum_{\bb\in\N^d} f_\bb \xx^\bb\in \hat{\mathcal{O}}(E)$ can be written uniquely as \begin{equation}\label{para definir Tpq}
\hat{f}=\sum_{n=0}^\infty \hat{f}_{\aa,n}(\boldsymbol{x})\boldsymbol{x}^{n\boldsymbol{\a}},\quad \hat{f}_{\aa,n}(\xx)=\sum_{\aa\not\leq\bb}f_{n\boldsymbol{\a}+\boldsymbol{\beta}}\boldsymbol{x}^{\boldsymbol{\beta}}.
\end{equation} 

Analogously, given $\PP \in \hat{\mathcal{O}}\setminus\{0\}$, $\PP(\00)= 0$ and an injective linear form $\ell:\N^d\rightarrow\R^+$,  $\ell(\aa)=\ell_1\a_1+\cdots+\ell_d\a_d$, every $\hat{f}\in \hat{\mathcal{O}}(E)$ can be written uniquely in the form \begin{equation}\label{to define T_l}
\hat{f}=\sum_{n=0}^\infty \hat{f}_{\PP,\ell,n}(\xx) \PP^n, \quad \hat{f}_{\PP,\ell,n}(\xx)\in \Delta_{\ell}(\PP,E).	
\end{equation}

Here the linear form $\ell$ is used to order the monomials by:   $\xx^\aa<_\ell\xx^\bb\text{ if }\ell(\aa)<\ell(\bb)$. We write  $\nu_\ell(\hat{f})=\aa$ if $\xx^\aa=\min_\ell\{\xx^\bb | f_\bb\neq 0\}$, where the minimum is taken according to $<_\ell$ and $$\Delta_\ell(\PP,E):=\left\{\sum g_\bb \xx^\bb\in \hat{\mathcal{O}}(E) \hspace{0.1cm}|\hspace{0.1cm} g_\bb=0 \text{ if } \bb\in \nu_\ell(\PP)+\N^d\right\}.$$ In the case $\PP=\xx^\aa$ we will simply write $\Delta(\xx^\aa,E)$. Then $\Delta_\ell(\PP,E)=\Delta(\xx^{\nu_\ell(\PP)},E)$. The decomposition (\ref{to define T_l}) follows from the \textit{Generalized Weierstrass Division} determined by $\PP$ and $\ell$, see \cite[Lemma 2.4, 2.6]{Sum wrt germs}.

\begin{prop}\label{Generalized Weierstrass Division} Let $P$ and $\ell$ as above. For every $\hat{g}\in \hat{\mathcal{O}}(E)$, there exist unique $q\in \hat{\mathcal{O}}(E)$ and $r\in \Delta_\ell(\PP,E)$ such that $g=q\PP+r$. Furthermore, if $\rho>0$ is  sufficiently small, then for every $g\in\mathcal{O}_b(D_{\rho(\ell)})$, $\rho(\ell)=(\rho^{\ell_1},\dots,\rho^{\ell_d})$, there exist unique $r\in \mathcal{O}_b(D_{\rho(\ell)})$ with $J(r)\in \Delta_\ell(P,E)$ and $q\in \mathcal{O}_b(D_{\rho(\ell)})$ such that $g=qP+r$. The corresponding operators $$Q_{P,\ell}, R_{P,\ell}:\mathcal{O}_b(D_{\rho(\ell)})\rightarrow \mathcal{O}_b(D_{\rho(\ell)}),\quad g\mapsto Q_{P,\ell}(g)=q, g\mapsto R_{P,\ell}(g)=r,$$ are linear and continuous.
\end{prop}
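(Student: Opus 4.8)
The plan is to use the injective linear form $\ell$ to well-order the monomials and run a triangular coefficient comparison for the formal part, and then to install the analytic estimates on the $\ell$-weighted polydisks $D_{\rho(\ell)}$ via a fixed-point reduction to division by a single monomial.

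Since $\ell_1,\dots,\ell_d>0$, each sublevel set $\{\bb\in\N^d\mid\ell(\bb)\le c\}$ is finite, so $\ell$ induces a total well-order on $\N^d$ assigning distinct weights to distinct monomials. Write $\boldsymbol a=\nu_\ell(\PP)$ for the $\ell$-leading exponent of $\PP$ and $P_{\boldsymbol a}\neq0$ for the corresponding coefficient; then $\Delta_\ell(\PP,E)=\Delta(\xx^{\boldsymbol a},E)$ is the span of the monomials $\xx^\bb$ with $\bb\not\ge\boldsymbol a$. For the formal division I would match coefficients of $g=q\PP+r$ in increasing $\ell$-order. The coefficient of $\xx^\gg$ in $q\PP$ is $\sum_{\boldsymbol\delta+\bb=\gg}q_{\boldsymbol\delta}P_\bb$; every $\bb$ with $P_\bb\neq0$ has $\ell(\bb)\ge\ell(\boldsymbol a)$, so each term satisfies $\ell(\boldsymbol\delta)\le\ell(\gg)-\ell(\boldsymbol a)$, with equality only when $\bb=\boldsymbol a$. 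The resulting system is triangular for the well-order (note $\ell(\boldsymbol a)>0$ because $\PP(\00)=0$): when $\gg\ge\boldsymbol a$ one puts $r_\gg=0$ and solves for $q_{\gg-\boldsymbol a}=P_{\boldsymbol a}^{-1}(g_\gg-\cdots)$, and when $\gg\not\ge\boldsymbol a$ there is no quotient unknown and $r_\gg$ is read off, only strictly lower $\ell$-weight coefficients of $q$ entering in either case. This produces unique $q\in\hat{\mathcal{O}}(E)$ and $r\in\Delta_\ell(\PP,E)$.

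For the bounded statement the decisive metric fact is $\|\xx^\bb\|_{D_{\rho(\ell)}}=\prod_j(\rho^{\ell_j})^{\b_j}=\rho^{\ell(\bb)}$, so on the weighted polydisk the $\ell$-weight is exactly the order of magnitude of a monomial. I would first treat the pure monomial $P_{\boldsymbol a}\xx^{\boldsymbol a}$: the truncation-and-division operators $Q_0,R_0:\mathcal{O}_b(D_{\rho(\ell)})\to\mathcal{O}_b(D_{\rho(\ell)})$, defined by $h=Q_0(h)\,P_{\boldsymbol a}\xx^{\boldsymbol a}+R_0(h)$ with $J(R_0(h))\in\Delta(\xx^{\boldsymbol a},E)$, are linear and continuous on the \emph{same} polydisk, with $\|Q_0\|\lesssim\rho^{-\ell(\boldsymbol a)}$. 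This follows by the iterated one-variable Cauchy estimates combined with the maximum principle (bounding on circles of radius tending to $\rho^{\ell_j}$ from below), or directly from the monomial case of \cite[Lemma 2.4, 2.6]{Sum wrt germs}. Writing $\PP=P_{\boldsymbol a}\xx^{\boldsymbol a}+\PP'$ with $\PP'$ of $\ell$-order $>\ell(\boldsymbol a)$, the division $g=q\PP+r$ with $J(r)\in\Delta_\ell(\PP,E)$ becomes the fixed-point problem $q=Q_0(g-q\PP')$ and $r=R_0(g-q\PP')$.

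The hard part will be the contraction estimate for $T\colon q\mapsto Q_0(q\PP')$. Since the lowest $\ell$-weight occurring in $\PP'$ exceeds $\ell(\boldsymbol a)$ by some gap $\eta>0$, one gets $\|\PP'\|_{D_{\rho(\ell)}}\le C\rho^{\ell(\boldsymbol a)+\eta}$ for $\rho$ below a fixed threshold, whence, using submultiplicativity of the sup-norm and the bound on $\|Q_0\|$, $\|T\|\le\|Q_0\|\,\|\PP'\|\le C'\rho^{\eta}$, which is $<1$ for $\rho$ small. Then $q=\sum_{n\ge0}(-T)^nQ_0(g)$ converges in operator norm and $r=R_0(g-q\PP')$; both depend linearly and continuously on $g$, yielding the operators $Q_{P,\ell}$ and $R_{P,\ell}$. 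Uniqueness is inherited from the formal part, and $J(r)\in\Delta_\ell(\PP,E)$ holds by construction of $R_0$. I expect the only genuine obstacle to be this quantitative balance between the amplification $\rho^{-\ell(\boldsymbol a)}$ coming from dividing by the leading monomial and the smallness $\rho^{\ell(\boldsymbol a)+\eta}$ of the tail $\PP'$; it is precisely the $\ell$-homogeneity $\|\xx^\bb\|_{D_{\rho(\ell)}}=\rho^{\ell(\bb)}$ of the weighted polydisk that makes these two powers cancel favorably.
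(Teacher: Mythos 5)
Your proof is correct, and it is worth noting that the paper itself gives no argument for this proposition: it is imported from \cite[Lemmas 2.4, 2.6]{Sum wrt germs}, and your route --- triangular induction along the well-order that $\ell$ induces on $\N^d$ (sublevel sets are finite) for the formal part, then the splitting $P=P_{\boldsymbol a}\xx^{\boldsymbol a}+P'$ and the Neumann series $q=\sum_{n\geq 0}(-T)^nQ_0(g)$ on the weighted polydisk, where the $\ell$-homogeneity $\sup_{D_{\rho(\ell)}}|\xx^{\bb}|=\rho^{\ell(\bb)}$ makes the amplification $\|Q_0\|\lesssim\rho^{-\ell(\boldsymbol a)}$ lose against $\|P'\|\lesssim\rho^{\ell(\boldsymbol a)+\eta}$ --- is exactly the standard mechanism behind those lemmas. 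Two small points deserve to be made explicit: first, the gap $\eta=\min\{\ell(\bb)-\ell(\boldsymbol a)\,:\,P_{\bb}\neq 0,\ \bb\neq\boldsymbol a\}$ is positive precisely because the sublevel sets of $\ell$ are finite (and if $P$ is a constant times a monomial then $P'=0$ and the fixed point is trivial); second, the uniform-in-$\rho$ bound on $R_0$ cannot be obtained by summing Cauchy estimates over the infinitely many monomials with $\bb\not\geq\boldsymbol a$, but follows, as your phrase ``iterated one-variable Cauchy estimates'' suggests, by writing $R_0=\mathrm{Id}-\pi_1\circ\cdots\circ\pi_d$, where $\pi_j$ removes the Taylor jet of order $<\a_j$ in $x_j$ and satisfies $\|\pi_j\|\leq 1+\a_j$. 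With these two details filled in, uniqueness descends from the formal statement through the injective Taylor map $J$, and the continuity of $Q_{P,\ell}$ and $R_{P,\ell}$ comes with the explicit norm bound $\|Q_{P,\ell}\|\leq\|Q_0\|/(1-\|T\|)$, so the argument is complete.
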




\begin{nota}\label{Remark 1}We remark the following facts that will be used later:
\begin{enumerate}\item For any $\hat{f}\in \hat{\mathcal{O}}(E)$ and $N\in\N^+$, since decomposition (\ref{to define T_l}) is unique, we have the relation  $$\hat{f}_{\PP^N,\ell,n}=\hat{f}_{\PP,\ell,nN}+\hat{f}_{\PP,\ell,nN+1}\PP+\cdots+\hat{f}_{\PP,\ell,nN+N-1}\PP^{N-1}.$$ 
		
\item If $\PP\in \mathcal{O}\setminus\{0\}$, then given $f\in \mathcal{O}(E)$ there exist $\rho>0$ and a unique sequence $(f_{\PP,\ell,n})_{n\in\N}$ in $\mathcal{O}_b(D_\rho^d,E)$ with $J(f_{\PP,\ell,n})\in \Delta_\ell(\PP,E)$, for all $n\in\N$, such that $f$ can also be written in the
		form $$f(\xx)=\sum_{n=0}^\infty f_{\PP,\ell,n}(\xx) \PP(\xx)^n,\quad \text{ for } |\xx|:=\max_{1\leq j\leq d} |x_j|\leq \rho.$$ Using the operators $Q_{P,\ell}$, $R_{P,\ell}$, the functions $f_{P,\ell,n}$ are given by $$f_{P,\ell,n}=R_{P,\ell}\circ Q_{P,\ell}^n(f).$$
\end{enumerate}
\end{nota}

Using decompositions (\ref{para definir Tpq}) and (\ref{to define T_l}) we obtain isomorphisms \begin{equation}\label{Isomoporphisms Ta TP}
	\hat{T}_{\boldsymbol{\a}}:\hat{\mathcal{O}}(E)\rightarrow \Delta(\xx^\aa,E)[[t]],\quad \hat{T}_{\PP,\ell}:\hat{\mathcal{O}}(E)\rightarrow \Delta_\ell(\PP,E)[[t]],
\end{equation} that satisfy $(\hat{T}_\aa\hat{f})(\xx^\aa)=\hat{f}$ and $(\hat{T}_{\PP,\ell} \hat{f})(\PP)=\hat{f}$, for all series $\hat{f}\in \hat{\mathcal{O}}(E)$, i.e., when we substitute $t=\xx^\aa$ or $t=\PP(\xx)$, respectively, we recover the initial series $\hat{f}$.

From now on, we will assume $\PP\in \mathcal{O}\setminus\{0\}$, $\PP(\00)=0$, is a germ of analytic function. For asymptotic expansions in $\xx^\aa$ or $\PP$ we will need to restrict our attention to formal power series in $\hat{\mathcal{O}}(E)$ for which the application of the previous isomorphisms gives us meaningful coefficients, i.e.,\ holomorphic maps. For this purpose we introduce the following spaces:

\begin{enumerate}
\item For the monomial case we will denote: \begin{align*}
\hat{\mathcal{O}}'_{r}(E)&:=\bigcap_{j=1}^d I_j(\mathcal{O}_b(D_r^{d-1},E)[[x_j]]),&  \mathcal{E}^{\boldsymbol{\a}}_r&:=\{ g\in\mathcal{O}_b(D_r^d,E)\hspace{0.1cm}|\hspace{0.1cm} J(g)\in \Delta(\xx^\aa,E)\},\\
\hat{\mathcal{O}}'(E)&:=\bigcup_{r>0} \hat{\mathcal{O}}'_r(E),& \mathcal{E}^{\boldsymbol{\a}}&:=\bigcup_{r>0}\mathcal{E}^{\boldsymbol{\a}}_r.
\end{align*} 
Here $I_j(\sum_n f_nx_j^n)=\sum_n I_j(f_n)x_j^n$, $I_j(f)(\xx)=f(x_1,\dots,x_{j-1},x_{j+1},\dots,x_d)$ and $\hat{\mathcal{O}}'_{r}(E)$ is embedded into $\hat{\mathcal{O}}'_{r'}(E)$, $0<r'<r$, by restriction. If $\hat{f}\in \hat{\mathcal{O}}'(E)$, then $\hat{T}_\aa \hat{f}\in \mathcal{E}^\aa[[t]] $ and all coefficients $\hat{f}_{\aa,n}=f_{\aa,n}$ have a common radius of convergence at the origin. 

\

\item For the general case we recall that a sequence $\{f_n\}_{n\in\N}\subset \mathcal{O}_b(D_r^d,E)$ is an \textit{asymptotic sequence for} $\hat{f}\in \hat{\mathcal{O}}(E)$ if $J(f_n)$ converges in the $\mathfrak{m}$-adic topology of $\hat{\mathcal{O}}(E)$ ($\mathfrak{m}=(\xx)$) to $\hat{f}$. If, moreover, $J(f_n)\equiv \hat{f} \text{ mod } \PP^n \hat{\mathcal{O}}(E)$, for all $n\in\N$, then we will say that the sequence is a \textit{$\PP$--asymptotic sequence for $\hat{f}$}. Then we can define:

\begin{align*}
\hat{\mathcal{O}}_{r}^\PP(E)&:=\left\{\hat{f}\in \hat{\mathcal{O}}(E) \hspace{0.1cm}|\hspace{0.1cm} \hat{f} \text{ has a }\PP-\text{asymptotic }\right.& 
\mathcal{E}^\PP_{\ell,r}&:=\mathcal{E}^{\nu_\ell(\PP)}_r,\\
&\hspace{2,9cm}\left.\text{sequence in } \mathcal{O}_b(D_r^d,E) \right\}, & &\\
\hat{\mathcal{O}}^\PP(E)&:=\bigcup_{r>0} \hat{\mathcal{O}}^\PP_{r}(E),&\mathcal{E}^{\PP}_{\ell}&:=\bigcup_{r>0}\mathcal{E}^{\PP}_{\ell,r}.
\end{align*}
We will refer to the elements of $\hat{\mathcal{O}}^\PP(E)$ as \textit{$\PP$--asymptotic series}. Note that $\mathcal{E}^{\boldsymbol{\a}}_r$ and $\mathcal{E}^\PP_{\ell,r}$ become Banach spaces with the norm $\|g\|=\sup_{|\xx|< r} \|g(\xx)\|$.
\end{enumerate}

If $\hat{f}\in \hat{\mathcal{O}}^\PP(E)$, then $\hat{T}_{\PP,\ell}\hat{f}\in \mathcal{E}^\PP_{\ell}[[t]]$ and all coefficients $\hat{f}_{\PP,\ell,n}=f_{\PP,\ell,n}$ have a common radius of convergence at the origin \cite[Coro. 4.10]{Sum wrt germs}. When necessary we will employ the notation $\hat{T}_{\PP,\ell}\hat{f}\mid_{D_\rho^d}$ to empathize the fact that the coefficients $f_{\PP,\ell,n}$ are defined on $D_\rho^d$, i.e.,  $\hat{T}_{\PP,\ell}\hat{f}\mid_{D_\rho^d}=\sum_{n=0}^\infty f_{\PP,\ell,n}  t^n\in \mathcal{E}^\PP_{\ell,\rho}[[t]]$.

In the analytic setting it is natural to work with \textit{$\PP$-sectors}, i.e., sets of the form $$\Pi_{P}=\Pi_{P}(a,b;\RR)=\left\{\boldsymbol{x}\in \C^d \hspace{0.1cm}|\hspace{0.1cm} \PP(\xx)\neq0,  a<\text{arg}(\PP(\xx))<b,\hspace{0.1cm} 0<|x_j|<R_j, \text{ for }j=1, \dots,d\right\},$$ where $a<b$ are real numbers and $\RR=(R_1,\dots,R_d)\in(\R^+)^d$ is a polyradius. For $\PP(\xx)=\xx^\aa$ we will simply write $\Pi_\aa$. The values $b-a$ and $\theta=(b+a)/2$ are called the \textit{opening} and the \textit{bisecting direction} of the $\PP$-sector $\Pi_\PP$. We will also use the notation $\Pi_{\PP}(a,b;\RR)=S_{\PP}(\theta,b-a;\RR)=S_{\PP}$. The notion of subsector is also clear.

Here any convenient branch of $\arg$ may be used. Anyhow, we we will only consider $\PP$-sectors of opening not greater than $2\pi$.

It is possible to construct operators $T_\aa$ and $T_{\PP,\ell}$ sharing the same properties as their formal counterparts (\ref{Isomoporphisms Ta TP}) for holomorphic maps defined on $\xx^\aa$- and $\PP$-sectors, respectively. We recall this main and technical result \cite[Lemma 3.8, Thm. 4.7]{Sum wrt germs} in the following theorem.

\begin{teor}\label{Existence TlP} Let $\ell:\N^d\rightarrow \R^+$ be an injective linear form and $\PP\in \mathcal{O}\setminus\{0\}$, $\PP(\00)=0$. Let $\Pi_\PP=\Pi_\PP(a,b;\RR)$ be a $\PP$-sector. Then there exists $\rho, \sigma ,L>0$ such that $\PP(D_{\rho}^d)\subset D_\sigma$ and the following properties hold:
\begin{enumerate}
\item If $f:\Pi_\PP\rightarrow E$ is a holomorphic map, then there exists a uniquely determined holomorphic map $T_{\PP,\ell}f:V(a, b;\sigma)\times D_\rho^d\rightarrow E$ such that $J((T_{\PP,\ell}f)(t,\cdot))\in \Delta_\ell(\PP,E)$ for any $t$ and $$(T_{\PP,\ell}f)(\PP(\xx),\xx)=f(\xx),\quad \xx\in \Pi_\PP, |\xx|<\rho .$$

\item Given a function $K:(0,S)\rightarrow \R^+$, $S\geq \sup_{\xx\in	\Pi_\PP} |\PP(\xx)|$, such that $\|f(\xx)\|\leq K(|\PP(\xx)|)$ for $\xx\in \Pi_\PP$, we have $$\|(T_{\PP,\ell} f)(t,\xx)\|\leq \frac{L}{|t|} K(|t|),\quad t\in V(a,b;\sigma), |\xx|< \rho.$$

\item If $\PP(\xx)=\xx^\aa$, then we can choose $\sigma=\RR^\aa$ and the inequality in (2) takes the form $$\|(T_{\aa} f)(t,\xx)\|\leq \frac{\RR^\aa}{|t|} K(|t|)\prod_{j=1}^d \left(1-\frac{|x_j|}{R_j}\right)^{-1},\quad t\in V(a,b;\RR^\aa), \xx\in D_\RR.$$
\end{enumerate}
\end{teor}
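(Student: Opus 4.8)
The plan is to treat the two layers of the statement separately: the construction for a general germ $\PP$ (parts (1)--(2)) and the sharper, fully explicit version for monomials (part (3)). I would establish uniqueness first, then prove existence for general $\PP$ by a sectorial Weierstrass division along the fibres of $\PP$, and finally specialize to the monomial case to read off the constants in (3).

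\emph{Uniqueness.} If $g_1,g_2$ both satisfy (1), their difference $h$ is holomorphic on $V(a,b;\sigma)\times D_\rho^d$, vanishes on each fibre $\{\PP(\xx)=t\}$ (because $h(\PP(\xx),\xx)=0$), and satisfies $J(h(t,\cdot))\in\Delta_\ell(\PP,E)$ for every $t$. Vanishing on $\{\PP=t\}$ makes $h$ divisible by $\PP(\xx)-t$, say $h(t,\xx)=(\PP(\xx)-t)\,G(t,\xx)$ with $G$ jointly holomorphic. Expanding $G(t,\cdot)=\sum_n G_n(t)\,\PP^n$ with $G_n(t)\in\Delta_\ell(\PP,E)$ and imposing $J(h(t,\cdot))\in\Delta_\ell(\PP,E)$ forces $G_{n-1}=t\,G_n$ for all $n\ge1$, so $h(t,\xx)=-t\,G_0(t)$ is independent of $\xx$; then $h(\PP(\xx),\xx)=0$ gives $G_0\equiv0$, whence $h\equiv0$.

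\emph{Existence for general $\PP$.} Fix $\rho,\sigma>0$ with $\PP(D_\rho^d)\subset D_\sigma$. For $t\in V(a,b;\sigma)$ the fibre $\{\PP(\xx)=t\}\cap D_\rho^d$ is contained in $\Pi_\PP$, so $f$ is defined on it, and I would set $(T_{\PP,\ell}f)(t,\cdot)$ equal to the unique element of $\Delta_\ell(\PP,E)$ that agrees with $f$ on this fibre. For $t=0$ this is the decomposition $f=\sum_n f_{\PP,\ell,n}\PP^n$ of Remark \ref{Remark 1}, governed by the splitting $g=Q_{\PP,\ell}(g)\PP+R_{\PP,\ell}(g)$ of Proposition \ref{Generalized Weierstrass Division}; for $t\neq0$ it is the corresponding division of $f$ modulo $\PP(\xx)-t$, a holomorphic deformation in $t$. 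By construction $(T_{\PP,\ell}f)(t,\xx)-f(\xx)$ is divisible by $\PP(\xx)-t$, so substituting $t=\PP(\xx)$ returns $f$, while each $\xx$-slice lies in $\Delta_\ell(\PP,E)$. Representing this matching as a Cauchy integral in $t$ whose integrand is controlled by $K$ yields (2): the kernel $1/(\tau-t)$ produces the factor $1/|t|$ and the norm of the division produces the constant $L$. For $\PP=\xx^\aa$ the fibres $\{\xx^\aa=t\}$ are explicit, so the matching $\Delta(\xx^\aa,E)$-representative can be written down by Cauchy integrals over the circles $|\xi_j|=R_j$ in each variable; this realizes $T_\aa f$ on $V(a,b;\RR^\aa)\times D_\RR$, shows $\sigma=\RR^\aa$ is admissible, and makes the constant explicit, the transverse kernels contributing the factors $(1-|x_j|/R_j)^{-1}$ and the kernel in $t$ contributing $\RR^\aa/|t|$, which is exactly (3).

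\emph{Main obstacle.} The crux is that $f$ lives only on the $\PP$-sector $\Pi_\PP$, so the division by $\PP-t$ must be carried out along fibres that stay inside $\Pi_\PP$ yet remain large enough to pin down a single element of $\Delta_\ell(\PP,E)$; near the coordinate hyperplanes these fibres meet $\partial\Pi_\PP$, and one must verify that the matching still extends holomorphically there. The second, quantitative difficulty is to produce one function holomorphic \emph{jointly} in $(t,\xx)$ with bounds uniform on the whole sector, controlling simultaneously the blow-up $1/|t|$ as $t\to0$ and the boundary factors $(1-|x_j|/R_j)^{-1}$ as $\xx\to\partial D_\RR$, rather than merely a $t$-wise family of one-variable objects.
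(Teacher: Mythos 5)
You should first be aware that this paper contains no proof of this theorem: Section~\ref{Asymptotic expansions and summability wrt an analytic germ} explicitly recalls it from \cite{Sum wrt germs} (Lemma~3.8 for the monomial case, Theorem~4.7 in general), so your attempt must be measured against the argument in that source. Your uniqueness step is essentially sound, up to one slip: from $h=(\PP-t)G$ and $J(h(t,\cdot))\in\Delta_\ell(\PP,E)$ you correctly get the recursion $G_{n-1}(t)=tG_n(t)$, but $h(t,\cdot)=-tG_0(t)$ is \emph{not} independent of $\xx$, since $G_0(t)\in\Delta_\ell(\PP,E)$ is still a function of $\xx$; the clean conclusion comes from the recursion together with the operator bounds of Proposition~\ref{Generalized Weierstrass Division}, which give $\|G_0(t)\|=|t|^n\|G_n(t)\|\leq |t|^n\|R_{\PP,\ell}\|\,\|Q_{\PP,\ell}\|^n\sup\|G(t,\cdot)\|\to 0$ for $|t|$ small, whereas vanishing of $h(\PP(\xx),\xx)$ alone only gives vanishing of $G_0(t)(\xx)$ on a hypersurface of $V\times D_\rho^d$ and does not by itself force $G_0\equiv 0$.

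The genuine gap is in existence, in both halves of your plan. For general $\PP$, defining $(T_{\PP,\ell}f)(t,\cdot)$ as ``the unique element of $\Delta_\ell(\PP,E)$ that agrees with $f$ on the fibre'' presupposes exactly what the theorem asserts: Proposition~\ref{Generalized Weierstrass Division} divides only dividends that are holomorphic and bounded on a full polydisk, while $f$ lives only on $\Pi_\PP$, so ``the corresponding division of $f$ modulo $\PP(\xx)-t$'' is not an available operation, and producing from the fibre data a bounded element of $\Delta_\ell(\PP,E)$ on a $t$-independent polydisk, jointly holomorphic in $(t,\xx)$ and with the uniform bound $L|t|^{-1}K(|t|)$, is precisely the technical content of \cite{Sum wrt germs}, Theorem~4.7. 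Your proposed anchor at $t=0$ is also unavailable: $0\notin V(a,b;\sigma)$, and a sectorial $f$ admits no convergent decomposition $f=\sum_n f_{\PP,\ell,n}\PP^n$ in general --- Remark~\ref{Remark 1}(2) applies to germs in $\mathcal{O}(E)$, not to functions on $\PP$-sectors. Likewise, in the monomial case the formula you propose, Cauchy integrals ``over the circles $|\xi_j|=R_j$ in each variable,'' cannot be written down as stated: on the full torus the argument of $\xxi^\aa$ sweeps all of $\R$, so the torus is not contained in $\Pi_\aa$ and $f$ is undefined on most of it; moreover there $|\xxi^\aa|=\RR^\aa\neq|t|$, so the bound $\|f(\xxi)\|\leq K(|\xxi^\aa|)$ could never produce the factor $K(|t|)$ required in (2)--(3). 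The construction of \cite{Sum wrt germs} instead integrates over $(d-1)$-dimensional cycles lying \emph{inside} the fibre $\{\xxi^\aa=t\}$ (via ramified substitutions such as $\xi_d=(t/\xi_1^{\a_1}\cdots\xi_{d-1}^{\a_{d-1}})^{1/\a_d}$, which do stay in $\Pi_\aa$ because $\arg(\xxi^\aa)=\arg t\in(a,b)$ there), enforces the support condition $J((T_\aa f)(t,\cdot))\in\Delta(\xx^\aa,E)$ by splitting the resulting Laurent-type expansions, and reads off the constants $(1-|x_j|/R_j)^{-1}$ and $\RR^\aa/|t|$ from those explicit kernels. You have correctly named this obstacle in your last paragraph, but naming it is where your proof stops; overcoming it is the theorem.
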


Finally we are in position to recall the notion of $\xx^\aa$- and $\PP$--asymptotic expansions.

\begin{defi}\label{Def P-asym}Let $f\in\mathcal{O}(\Pi_{\boldsymbol{\PP}},E)$, $\Pi_{\PP}=\Pi_{\PP}(a,b;\RR)$ and $\hat{f}\in \hat{\mathcal{O}}(E)$. We will say that $f$ has $\hat{f}$ as \textit{$\PP$--asymptotic expansion on $\Pi_{\PP}$} if $\hat{f}\in \hat{\mathcal{O}}^\PP_{r}(E)$ for some $r>0$ and if there is a $\PP$--asymptotic sequence $\{f_n\}_{n\in\N}$ in $ \mathcal{O}_b(D_r^d,E)$ such that for all $N\in\N$ and every subsector $\Pi_\PP'\subset \Pi_\PP$ there exists $C_N>0$ such that \begin{equation}\label{Def P-asym inq}
\|f(\xx)-f_N(\xx)\|\leq C_N |\PP(\xx)|^N,\quad \text{ on } \Pi'_\PP\cap D_r^d.
\end{equation}
We will denote this situation by $f\sim^{\PP} \hat{f}$ on $\Pi_{\PP}$. If $\PP(\xx)=\xx^\aa$,  we will write $f\sim^{\aa} \hat{f}$ on $\Pi_{\aa}$.
\end{defi}

The main purpose of the operators $\hat{T}_{\aa}, T_{\aa}, \hat{T}_{\PP,\ell}$ and $T_{\PP,\ell}$ is to provide a characterization of $\xx^\aa$- and $\PP$--asymptotic expansion in terms of classical asymptotic expansions in one variable, respectively. In this context, we state the following result \cite[Thm. 4.9]{Sum wrt germs}.

\begin{teor}\label{C P-asym in terms of Tl}
	Let $\ell:\N^d\rightarrow \R^+$ be an injective linear form. Then $f\sim^\PP \hat{f}$ on $\Pi_\PP=\Pi_\PP(a,b;\RR)$ if and only if there exists $\rho>0$ such that $\hat{T}_{\PP,\ell}\hat{f}\mid_{D_\rho^d}=\sum_{n=0}^\infty f_{\PP,\ell,n}  t^n\in \mathcal{E}^\PP_{\ell,\rho}[[t]]$ and one of the following two equivalent conditions holds:\begin{enumerate}
		\item We can choose $f_N=\sum_{n=0}^{N-1} f_{\PP,\ell,n}\PP^n$ in Definition \ref{Def P-asym}, i.e., for every $N\in\N$ and $\Pi'_\PP\subset \Pi_\PP$ there exists $L_N>0$ such that \begin{equation}\label{Formula Pasym}
		\left\|f(\boldsymbol{x})-\sum_{n=0}^{N-1}f_{\PP,\ell,n}(\xx)\PP(\xx)^n \right\|\leq L_N|\PP(\xx)|^N,\quad \text{ on } \Pi_{\PP}'\cap D_\rho^d.
		\end{equation}
		
		\item The function $T_{\PP,\ell}f$ from Theorem \ref{Existence TlP} is defined on $V(a,b;\sigma)\times D_\rho^d\rightarrow\C$ for some positive $\sigma$ and satisfies $$T_{\PP,\ell} f\sim \hat{T}_{\PP,\ell}\hat{f}\mid_{D_\rho^d} \text{ as } V (a,b;\sigma)\ni t\rightarrow 0.$$
	\end{enumerate}
\end{teor}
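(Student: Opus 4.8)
The plan is to prove the cycle of implications $f\sim^\PP \hat{f}\Rightarrow(2)\Rightarrow(1)\Rightarrow f\sim^\PP\hat{f}$, which at once yields the equivalence of (1) and (2) and their equivalence with the definition. The single engine throughout is the transfer operator $T_{\PP,\ell}$ of Theorem \ref{Existence TlP}, used through its defining substitution identity $(T_{\PP,\ell}f)(\PP(\xx),\xx)=f(\xx)$, its linearity, the uniqueness clause in part (1), and the growth estimate in part (2). The key elementary fact I would isolate first is that $T_{\PP,\ell}$ transfers $\PP$-adic blocks to monomials: if $g\in\mathcal{O}_b(D_\rho^d,E)$ with $J(g)\in\Delta_\ell(\PP,E)$, then the candidate $G(t,\xx)=g(\xx)t^m$ satisfies $G(\PP(\xx),\xx)=g(\xx)\PP(\xx)^m$ and $J(G(t,\cdot))=t^mJ(g)\in\Delta_\ell(\PP,E)$ for each fixed $t$, so by the uniqueness in Theorem \ref{Existence TlP}(1) we get $T_{\PP,\ell}(g\,\PP^m)(t,\xx)=g(\xx)t^m$.

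For $f\sim^\PP\hat{f}\Rightarrow(2)$, I would start from a $\PP$-asymptotic sequence $\{f_n\}\subset\mathcal{O}_b(D_r^d,E)$ realizing the bounds (\ref{Def P-asym inq}), shrink $r$ to a radius $\rho$ on which $T_{\PP,\ell}$ is defined, and apply $T_{\PP,\ell}$ to $f-f_N$. Feeding $K(s)=C_Ns^N$ into Theorem \ref{Existence TlP}(2) gives $\|T_{\PP,\ell}(f-f_N)(t,\xx)\|\le LC_N|t|^{N-1}$ on $V(a,b;\sigma)\times D_\rho^d$. To identify $T_{\PP,\ell}f_N$ I would write $f_N$ in its $\PP$-adic form $f_N=\sum_m (f_N)_{\PP,\ell,m}\PP^m$ (Remark \ref{Remark 1}(2)) and apply the block identity termwise, obtaining $T_{\PP,\ell}f_N(t,\xx)=\sum_m (f_N)_{\PP,\ell,m}(\xx)t^m$; the convergence and termwise passage are justified by the common radius of convergence of the coefficients. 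Since $J(f_N)\equiv\hat{f}\bmod\PP^N$, uniqueness of the decomposition (\ref{to define T_l}) forces $(f_N)_{\PP,\ell,m}=f_{\PP,\ell,m}$ for $m<N$, so $T_{\PP,\ell}f_N-\sum_{m<N}f_{\PP,\ell,m}t^m=O(|t|^N)$. Combining the two estimates, and using the order $N+1$ in place of $N$ to absorb the single lost power of $|t|$ coming from the factor $1/|t|$, yields $\|T_{\PP,\ell}f(t,\xx)-\sum_{m<N}f_{\PP,\ell,m}(\xx)t^m\|\le C'_N|t|^N$ uniformly in $\xx\in D_\rho^d$, i.e.\ $T_{\PP,\ell}f\sim\hat{T}_{\PP,\ell}\hat{f}\mid_{D_\rho^d}$, which is (2).

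For $(2)\Rightarrow(1)$, I would fix a subsector $\Pi'_\PP\subset\Pi_\PP$; under $t=\PP(\xx)$ it is carried into a genuine subsector $V'\subset V(a,b;\sigma)$, on which the classical expansion gives $\|T_{\PP,\ell}f(t,\xx)-\sum_{m<N}f_{\PP,\ell,m}(\xx)t^m\|\le C_N|t|^N$; substituting $t=\PP(\xx)$ and using $(T_{\PP,\ell}f)(\PP(\xx),\xx)=f(\xx)$ gives exactly (\ref{Formula Pasym}). Finally $(1)\Rightarrow f\sim^\PP\hat{f}$ is almost immediate, since (1) is Definition \ref{Def P-asym} for the concrete choice $f_N=\sum_{m<N}f_{\PP,\ell,m}\PP^m$; it only remains to check these are a bona fide $\PP$-asymptotic sequence. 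Indeed $J(f_N)=\sum_{m<N}\hat{f}_{\PP,\ell,m}\PP^m\equiv\hat{f}\bmod\PP^N$ by (\ref{to define T_l}), and since $\PP(\00)=0$ we have $\PP^N\hat{\mathcal{O}}(E)\subseteq\mathfrak{m}^N$, which gives $\mathfrak{m}$-adic convergence $J(f_N)\to\hat{f}$; the hypothesis $\hat{T}_{\PP,\ell}\hat{f}\mid_{D_\rho^d}\in\mathcal{E}^\PP_{\ell,\rho}[[t]]$ guarantees $\hat{f}\in\hat{\mathcal{O}}^\PP_\rho(E)$, so all requirements of the definition are met. This closes the cycle.

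The main obstacle is Step A, and within it the precise identification of $T_{\PP,\ell}f_N$: one must match the genuinely one-variable asymptotic expansion of the transferred function against the $\PP$-adic coefficients of $\hat{f}$, which hinges on the uniqueness of both the division-type decomposition (\ref{to define T_l}) and of the transfer in Theorem \ref{Existence TlP}(1). The only quantitative care needed is to account for the one power of $|t|$ lost in the estimate of Theorem \ref{Existence TlP}(2), which is handled cleanly by reindexing $N\mapsto N+1$; all the subsector bookkeeping under $t=\PP(\xx)$ is routine.
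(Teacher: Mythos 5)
Your overall architecture is sound and is in fact the intended one: the cycle $f\sim^\PP\hat f\Rightarrow(2)\Rightarrow(1)\Rightarrow f\sim^\PP\hat f$ driven by the uniqueness clause of Theorem \ref{Existence TlP}(1) (giving linearity and the block identity $T_{\PP,\ell}(g\,\PP^m)(t,\xx)=g(\xx)t^m$), the identification $(f_N)_{\PP,\ell,m}=f_{\PP,\ell,m}$ for $m<N$ via uniqueness of the division (\ref{to define T_l}), and the reindexing $N\mapsto N+1$ to absorb the power of $|t|$ lost in Theorem \ref{Existence TlP}(2). Note that the present paper does not prove this statement at all (Section \ref{Asymptotic expansions and summability wrt an analytic germ} explicitly omits proofs), quoting it from \cite[Thm.\ 4.9]{Sum wrt germs}, whose argument runs along these same lines; your Steps B and C are correct modulo the harmless shrinking of $\rho$ that you flag as routine, and your verification in Step C that the partial sums form a bona fide $\PP$--asymptotic sequence is exactly right.

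There is, however, a genuine gap in Step A as written. Theorem \ref{Existence TlP}(2) requires the majorization $\|(f-f_N)(\xx)\|\leq K(|\PP(\xx)|)$ to hold on the \emph{whole} $\PP$-sector on which the transfer operator was constructed, whereas Definition \ref{Def P-asym} furnishes (\ref{Def P-asym inq}) only on proper subsectors $\Pi'_\PP$, with $C_N=C_N(\Pi'_\PP)$ blowing up as $\Pi'_\PP$ exhausts $\Pi_\PP$. So you cannot feed $K(s)=C_Ns^N$ into the full-sector operator, and your asserted conclusion --- a bound $\|T_{\PP,\ell}f(t,\xx)-\sum_{m<N}f_{\PP,\ell,m}(\xx)t^m\|\leq C_N'|t|^N$ valid on all of $V(a,b;\sigma)\times D_\rho^d$ --- is strictly stronger than condition (2) and fails in general: an asymptotic expansion never yields constants uniform up to the boundary rays, and here the input constants already degrade there. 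The repair is to run the entire transfer argument once per subsector, applying Theorem \ref{Existence TlP} to $\Pi_\PP(a',b';\RR_0)$ for each $a<a'<b'<b$; but then one must know that the constants $\rho$ and $\sigma$ produced by Theorem \ref{Existence TlP} can be chosen \emph{independently of the arc} $(a',b')$, for otherwise the remainder estimates live on polydisks $D^d_{\rho(a',b')}$ that may shrink as the subsector grows, and one does not obtain an expansion valued in the single Banach space $\mathcal{E}^\PP_{\ell,\rho}$ that condition (2) demands. This arc-uniformity is visible in the monomial case, where Theorem \ref{Existence TlP}(3) gives $\rho=\RR$, $\sigma=\RR^\aa$ and an explicit constant independent of $(a,b)$, and it does hold for the general construction in \cite{Sum wrt germs}, but it is not part of the statement of Theorem \ref{Existence TlP} as quoted here; your proof must invoke or establish it explicitly, since without it the passage from subsector estimates on shrinking polydisks to condition (2) with one fixed $\rho$ does not follow.
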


\begin{nota}\label{Remark 2}We remark the following facts on the notion of $\PP$--asymptotic expansions:
\begin{enumerate}\item The previous definition is independent of the chosen $\PP$--asymptotic sequence with limit $\hat{f}$.

\item $\PP$--asymptotic expansions are stable under addition and partial derivatives. If $E$ is a Banach algebra, then $\hat{\mathcal{O}}(E)$ is an algebra. In this case $\PP$--asymptotic expansions are stable under products as well. This is not obvious from the definition, except for addition.

\item The $\PP$--asymptotic expansion of a function on a $\PP$-sector, if it exists, is unique. Indeed, if $f\sim^\PP \hat{f}=\sum f_\bb \xx^\bb$ on $\Pi_\PP$,  then $$\lim_{\Pi'_{\PP}\ni\boldsymbol{x}\rightarrow\boldsymbol{0}} \frac{1}{\boldsymbol{\beta}!}\frac{\d^{\boldsymbol{\beta}}f}{\d \boldsymbol{x}^{\boldsymbol{\beta}}}(\boldsymbol{x})=f_\bb,\quad \text{for any subsector } \Pi_\PP'\subset \Pi_\PP.$$ For $\bb=\00$ the formula follows from inequality (\ref{Formula Pasym}) for $N=1$ in Theorem \ref{C P-asym in terms of Tl}(2). For an arbitrary $\bb$ the limit follows using the stability of $\sim^\PP$ under derivatives.

\item Consider two associated elements $\PP, \QQ\in \mathcal{O}\setminus\{0\}$, i.e.,\ $\QQ=U\cdot \PP$ where $U\in\mathcal{O}^\ast$ is a unit. Then  $\hat{\mathcal{O}}^\PP(E)=\hat{\mathcal{O}}^{\QQ}(E)$. Furthermore, if $|\xx|<r$, then $\theta_1<\text{arg } U(\xx)<\theta_2$, for some $\theta_1<\theta_2$ and $\theta_2-\theta_1$ can be made as small as desired if $r$ is small enough. It follows that if $f\sim^\PP \hat{f}$ on $\Pi_\PP(a,b;\RR)$, then $f\sim^{\QQ} \hat{f}$ on $\Pi_{\QQ}(a+\theta_1,b+\theta_2,\RR)$, if the polyradius $\RR$ is taken small enough.

\item If $f\sim^\PP \hat{f}$ on $\Pi_\PP(a,b;\RR)$, then $f\circ r_m\sim^{\PP\circ r_m} \hat{f}\circ r_m$ on $\Pi_{\PP\circ r_m}(a,b;\RR')$ and $f\circ b_\xi\sim^{\PP\circ b_\xi} \hat{f}\circ b_\xi$ on $\Pi_{\PP\circ b_\xi} (a,b;\RR')$ for any $m\geq 2$, $\xi\in\mathbb{P}_\C^1$ for some $\RR'$ is small enough.
\end{enumerate}
\end{nota}

\begin{nota}\label{Remark 5}We remark that the germ $\PP$ we have worked with may not depend on all $\xx$. To fix ideas assume $\xx=(\xx_1,\xx_2)\in\C^n\times \C^{d-n}$ and $\PP(\xx)=\PP(\xx_1)$. Then the variables $\xx_2$ are interpreted as regular parameters and instead of working in $E$ we work in the Banach space $\mathcal{O}_b(D_{\boldsymbol{\rho}},E)$, for some $\boldsymbol{\rho}\in(\R^+)^{d-n}$.
\end{nota}

\section{Summability in an analytic germ}\label{Summability in an analytic germ}

In this section we recall $\PP$-$s$--Gevrey asymptotic expansions and summability in a germ of analytic function. In particular we define $P$-$s$--Gevrey series and find a new characterization in Lemma \ref{Characterization of P-s-Gevrey} that allow us to easily prove basic properties of these series. We also include the key Lemmas \ref{Bounds for formal gevrey series} and \ref{P and PN} that will be used in the last section of the paper.

We say that {$\hat{f}\in\hat{\mathcal{O}}(E)$} is a \textit{$P$-$s$--Gevrey series}, $s\geq0$, if there is a $P$--asymptotic sequence $\{f_n\}_{n\in\N}\subset \mathcal{O}_b(D_r^d,E)$ for $\hat{f}$, such that $\|f_n(\xx)\|\leq CA^n n!^s$, for all $|\xx|\leq r$, $n\in\N$. In this case we say that $\{f_n\}_{n\in\N}$ is a \textit{$P$-$s$--asymptotic sequence} for $\hat{f}$. We will use the notation $\hat{\mathcal{O}}^{P,s}(E)$ for the set of $P$-$s$--Gevrey series. In the case $\PP(\xx)=\xx^\aa$ we will write $E[[\boldsymbol{x}]]_{s}^{\boldsymbol{\a}}=\hat{\mathcal{O}}^{\xx^\aa,s}(E)$ instead.
	
Given any injective linear form $\ell:\N^d\rightarrow \R^+$, $\hat{f}\in \hat{\mathcal{O}}^{P,s}(E)$ if and only if there is $\rho>0$ and a sequence $\{g_n\}_{n\in\N}\subset \mathcal{O}_b(D_\rho^d,E)$ with $J(g_n)\in\Delta_\ell(P,E)$, for all $n\in\N$ such that $\hat{T}_{P,\ell}\hat{f}=\sum_{n=0}^\infty g_n t^n$ is a $s-$Gevrey series in $t$, see \cite[Def./Prop. 7.5]{Sum wrt germs}. In fact, the restriction on the supports of the $g_n$ can be removed, as the following lemma shows.

\begin{lema}\label{Characterization of P-s-Gevrey}Let $\hat{f}\in\hat{\mathcal{O}}(E)$ be a series. Then $\hat{f}\in \hat{\mathcal{O}}^{P,s}(E)$ if and only if there are $r,C,A>0$ and a sequence  $\{f_n\}_{n\in\N}\in\mathcal{O}_b(D_r^d)$ such that $\hat{f}=\sum_{n=0}^\infty f_n P^n$ and  $\|f_n(\xx)\|\leq CA^n n!^s$, for all $|\xx|\leq r$, $n\in\N$.	
\end{lema}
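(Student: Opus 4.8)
The plan is to prove the two implications separately, observing that the forward one is essentially a restatement of the known characterization recalled just above the lemma (from \cite[Def./Prop. 7.5]{Sum wrt germs}), while the actual content lies in the converse, where the support condition $J(f_n)\in\Delta_\ell(P,E)$ is dropped. For the forward implication, I would fix any injective linear form $\ell$ and apply that result: it supplies $\rho>0$ and coefficients $f_{P,\ell,n}\in\mathcal{O}_b(D_\rho^d,E)$ for which $\hat{T}_{P,\ell}\hat{f}=\sum_{n\ge0}f_{P,\ell,n}\,t^n$ is $s$--Gevrey in $t$, i.e.\ $\|f_{P,\ell,n}\|\le CA^n n!^s$. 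Since $(\hat{T}_{P,\ell}\hat{f})(P)=\hat{f}$, substituting $t=P$ gives $\hat{f}=\sum_{n\ge0}f_{P,\ell,n}P^n$, so the sequence $f_n:=f_{P,\ell,n}$ is exactly the one required; the extra property $J(f_{P,\ell,n})\in\Delta_\ell(P,E)$ is simply not used.

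For the converse I would \emph{not} pass to the canonical coefficients at all, but instead exhibit a $P$-$s$--asymptotic sequence directly from the given decomposition by taking partial sums. Shrinking $r$ so that $P$ is holomorphic and bounded, say $|P|\le M$, on $D_r^d$, I would set $S_N:=\sum_{n=0}^{N-1}f_nP^n\in\mathcal{O}_b(D_r^d,E)$. First I would check that $\{S_N\}_{N\in\N}$ is a $P$--asymptotic sequence for $\hat{f}$: applying the Taylor map gives $\hat{f}-J(S_N)=\sum_{n\ge N}J(f_n)P^n=P^N\sum_{n\ge N}J(f_n)P^{n-N}\in P^N\hat{\mathcal{O}}(E)$, hence $J(S_N)\equiv\hat{f}\bmod P^N\hat{\mathcal{O}}(E)$, and since $\mathrm{ord}(P)\ge1$ also $J(S_N)\to\hat{f}$ in the $\mathfrak{m}$--adic topology. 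Then I would verify the Gevrey bound: $\|S_N(\xx)\|\le\sum_{n=0}^{N-1}C(AM)^n n!^s$, and since each of the $N$ terms is at most $C\,\max(1,AM)^N N!^s$, this is bounded by $C'A'^N N!^s$ with $C'=C$ and $A'=2\max(1,AM)$. Therefore $\{S_N\}$ is a $P$-$s$--asymptotic sequence for $\hat{f}$, and $\hat{f}\in\hat{\mathcal{O}}^{P,s}(E)$ by the very definition.

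I do not anticipate a genuine obstacle; the observation that shortens the argument is that the definition of $\hat{\mathcal{O}}^{P,s}(E)$ only requires \emph{some} $P$--asymptotic sequence with Gevrey bounds, not the canonical support-restricted one, so the partial sums of any Gevrey-bounded $P$--adic decomposition serve directly. The only computation demanding a little care is the summation of $\sum_{n<N}(AM)^n n!^s$, where the factor $N$ from the number of terms must be absorbed into the base $A'^N$ (using $N\le 2^N$); the congruence $J(S_N)\equiv\hat{f}\bmod P^N$ certifying the $P$--asymptotic property is immediate from the given decomposition, so no use of the Weierstrass operators $Q_{P,\ell},R_{P,\ell}$ is needed.
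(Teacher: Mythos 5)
Your proof is correct, and its converse direction takes a genuinely different, more elementary route than the paper's. The paper never forms partial sums: instead it fixes an injective linear form $\ell$, expands each given coefficient as $f_n=\sum_{j\geq0} f_{n,j}P^j$ via the Generalized Weierstrass Division (Proposition \ref{Generalized Weierstrass Division}), with $f_{n,j}=R_{P,\ell}\circ Q_{P,\ell}^j(f_n)$ supported in $\Delta_\ell(P,E)$, regroups to obtain the canonical coefficients $F_N=\sum_{j=0}^N f_{j,N-j}$ of $\hat{T}_{P,\ell}\hat f$, and bounds $\|F_N\|\leq C\|R_{P,\ell}\|\,\|Q_{P,\ell}\|^N\sum_{j=0}^N \bigl(A/\|Q_{P,\ell}\|\bigr)^j j!^s$ using the finiteness of the operator norms of $R_{P,\ell}$ and $Q_{P,\ell}$; it then concludes via the cited characterization \cite[Def./Prop.~7.5]{Sum wrt germs}. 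You bypass the Weierstrass operators entirely and verify the definition of $\hat{\mathcal{O}}^{P,s}(E)$ directly: the partial sums $S_N=\sum_{n=0}^{N-1}f_nP^n$ satisfy $J(S_N)\equiv\hat f \bmod P^N\hat{\mathcal{O}}(E)$, hence converge $\mathfrak{m}$-adically since $P\in\mathfrak{m}$, and obey $\|S_N\|\leq C'A'^N N!^s$ after absorbing the factor $N$ into the geometric base — and since $\hat{\mathcal{O}}^{P,s}(E)$ is defined by the existence of \emph{some} $P$-$s$--asymptotic sequence, this suffices. Your argument is shorter and logically lighter (the converse needs neither the division operators nor the citation), while the paper's route buys the stronger output that the \emph{canonical}, support-restricted coefficients $\hat f_{P,\ell,N}$ themselves satisfy $s$--Gevrey bounds, which is the form actually used downstream (e.g., in Remark \ref{Remark 3}(2) and Lemma \ref{P and PN}); if you ever need that, you must still invoke \cite[Def./Prop.~7.5]{Sum wrt germs} to pass from your sequence $\{S_N\}$ to the canonical one, but that citation is equally available, so nothing is lost for the lemma as stated. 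Your forward direction coincides with the paper's.
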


\begin{proof}{If $\hat{f}\in \hat{\mathcal{O}}^{P,s}(E)$, the statement follows by \cite[Def./Prop. 7.5]{Sum wrt germs} as seen above}. Conversely, let us fix an injective linear form $\ell:\N^d\rightarrow \R^+$. Using Proposition \ref{Generalized Weierstrass Division} and Remark \ref{Remark 1}(2) we can find $\rho>0$ small enough such that for all $n\in\N$ we can write $$f_n(\xx)=\sum_{j=0}^\infty f_{n,j}(\xx) \PP^j(\xx),\quad |\xx|<\rho,\quad \text{where } f_{n,j}=R_{\PP,\ell}\circ Q_{\PP,\ell}^j(f_n)\text{ and } J(f_{n,j})\in\Delta_\ell(\PP,E).$$ In particular we see that $\hat{T}_{\PP.\ell}\hat{f}=\sum_{N=0}^\infty F_N t^N,$ where $F_N=\sum_{j=0}^N f_{j,N-j}.$ Since the operators $R_{\PP,\ell}$ and $Q_{\PP.\ell}$ are linear and continuous their operator norms $\|R_{\PP,\ell}\|, \|Q_{\PP.\ell}\|$ are finite and we obtain the bound \begin{align*}\|F_N(\xx)\|&\leq \sum_{j=0}^{N} \|R_{\PP,\ell}\| \|Q_{\PP.\ell}\|^{N-j} \sup_{|\yy|\leq \rho} \|f_j(\yy)\|\\	
&\leq \sum_{j=0}^{N} \|R_{\PP,\ell}\| \|Q_{\PP.\ell}\|^{N-j} CA^j j!^s=C\|R_{\PP,\ell}\| \|Q_{\PP.\ell}\|^N \sum_{j=0}^{N} \left(\frac{A}{\|Q_{\PP.\ell}\|}\right)^j j!^s,\end{align*} for $|\xx|<\rho$. Then it is clear that we can find constants $B,D>0$ such that $\|F_N(\xx)\|\leq DB^N N!^s$, for all $|\xx|\leq \rho$, $N\in\N$ and thus $\hat{f}\in \hat{\mathcal{O}}^{P,s}(E)$.
\end{proof}

\begin{coro}Let $P,Q\in\mathcal{O}\setminus\{0\}$ such that $P(\00)=Q(\00)=0$. The following assertions hold: \begin{enumerate}
		\item $\hat{\mathcal{O}}^{P,s}(E)$ is stable under sums and partial derivatives. If $E$ is a Banach algebra, then $\hat{\mathcal{O}}^{P,s}(E)$ is also stable under products.
		\item For any $N\in\N^+$, $\hat{\mathcal{O}}^{P^N,Ns}(E)=\hat{\mathcal{O}}^{P,s}(E)$.
		\item If $Q$ divides $P$, then 	
		$\hat{\mathcal{O}}^{P,s}(E)\subseteq \hat{\mathcal{O}}^{Q,s}(E)$.
	\end{enumerate}
\end{coro}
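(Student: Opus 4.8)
The whole statement reduces to the clean characterization of $\hat{\mathcal{O}}^{P,s}(E)$ provided by Lemma \ref{Characterization of P-s-Gevrey}: a series lies in $\hat{\mathcal{O}}^{P,s}(E)$ precisely when it admits \emph{some} representation $\hat{f}=\sum_{n\geq0} f_n P^n$ with $f_n\in\mathcal{O}_b(D_r^d,E)$ and $\|f_n\|\leq CA^n n!^s$. Since the lemma asks only for existence of such a representation (no support condition on the $f_n$), to prove each closure property I will simply exhibit a representation of the desired Gevrey type for the new series. For sums this is immediate: given $\hat{f}=\sum f_n P^n$ and $\hat{g}=\sum g_n P^n$ on a common polydisk (shrink $r$ if necessary), the series $\hat{f}+\hat{g}=\sum (f_n+g_n)P^n$ has coefficients bounded by $C'A'^n n!^s$ with $A'=\max\{A_f,A_g\}$. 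When $E$ is a Banach algebra, the Cauchy product $\hat{f}\hat{g}=\sum_N\big(\sum_{n+m=N}f_n g_m\big)P^N$ works the same way, using $\|f_n g_m\|\leq\|f_n\|\,\|g_m\|$, the submultiplicativity $n!\,m!\leq N!$ for $n+m=N$, and the fact that the $N+1$ terms of each convolution are absorbed by slightly enlarging the geometric factor.

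The only genuinely computational point in (1) is stability under a partial derivative $\partial_{x_i}$. Differentiating $\hat{f}=\sum f_n P^n$ termwise gives $\partial_{x_i}\hat{f}=\sum_n(\partial_{x_i}f_n)P^n+\sum_n n f_n(\partial_{x_i}P)P^{n-1}$, and I would bound each sum separately. For the first, Cauchy's inequality on a slightly smaller polydisk $D_{r'}^d$ turns the bound on $f_n$ into $\|\partial_{x_i}f_n\|\leq C_1 A^n n!^s$, so this is already a representation of the required type. For the second, reindex $n\mapsto m+1$ to write it as $\sum_{m\geq0}(m+1)f_{m+1}(\partial_{x_i}P)P^m$; here $\partial_{x_i}P$ is bounded near the origin and the coefficient bound $(m+1)\,C A^{m+1}(m+1)!^s=C A^{m+1}(m+1)^{1+s}(m!)^s$ is of Gevrey type $s$ once the polynomial factor $(m+1)^{1+s}$ is absorbed into an enlarged geometric base. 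Adding the two $P$-expansions yields a representation of $\partial_{x_i}\hat{f}$ with coefficients bounded by $C''B''^n n!^s$, whence $\partial_{x_i}\hat{f}\in\hat{\mathcal{O}}^{P,s}(E)$.

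For (2) I would pass between $P$- and $P^N$-expansions. Starting from $\hat{f}=\sum_n f_n (P^N)^n$ with $\|f_n\|\leq CA^n n!^{Ns}$, setting $g_{Nn}=f_n$ and $g_m=0$ otherwise gives a $P$-expansion, and the inequality $(n!)^N\leq(Nn)!$ shows $n!^{Ns}\leq(Nn)!^{s}$, so the coefficients are of Gevrey type $s$ in $P$; hence $\hat{\mathcal{O}}^{P^N,Ns}(E)\subseteq\hat{\mathcal{O}}^{P,s}(E)$. Conversely, grouping a $P$-expansion $\hat{f}=\sum_m f_m P^m$ into blocks $m=Nn+j$, $0\leq j<N$, yields $\hat{f}=\sum_n\big(\sum_{j=0}^{N-1}f_{Nn+j}P^j\big)(P^N)^n$, and the opposite estimate $(Nn)!\leq N^{Nn}(n!)^N$ shows the block coefficients are of Gevrey type $Ns$ in $P^N$; this gives the reverse inclusion. (Alternatively, Remark \ref{Remark 1}(1) relates the two decompositions directly.) Finally, for (3), if $Q$ divides $P$ write $P=QW$ with $W\in\mathcal{O}$ holomorphic, hence bounded by some $W_0$ on a small polydisk; then $\hat{f}=\sum_n f_n P^n=\sum_n (f_n W^n)Q^n$ with $\|f_n W^n\|\leq C(AW_0)^n n!^s$, so $\hat{f}\in\hat{\mathcal{O}}^{Q,s}(E)$.

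The sum, product and divisibility cases are routine; the main obstacle is the partial-derivative case, where the product rule breaks the single $P$-expansion into two pieces and one must combine a Cauchy estimate for $\partial_{x_i}f_n$ with the reindexing that produces the extra factor $n$, checking that the resulting polynomial-in-$n$ growth is harmless at Gevrey level $s$. The factorial inequalities $(n!)^N\leq(Nn)!\leq N^{Nn}(n!)^N$ underlying (2) are standard but must be invoked in the correct direction for each inclusion.
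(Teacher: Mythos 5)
Your proposal is correct and takes essentially the same route as the paper: both reduce every assertion to the representation criterion of Lemma \ref{Characterization of P-s-Gevrey} and then manipulate an expansion $\hat{f}=\sum_n f_n P^n$ directly (Cauchy inequalities plus absorption of the factor $n\,\partial_{x_i}P$ for derivatives, regrouping into blocks of length $N$ for (2), and rescaling $f_n\mapsto f_nW^n$ with $P=QW$ for (3)). The only cosmetic difference is that in (2) you invoke the elementary bounds $(n!)^N\le (Nn)!\le N^{Nn}(n!)^N$, both in the correct directions, where the paper uses the Stirling-type limit (\ref{Limit}); these are interchangeable here.
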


\begin{proof}Fix an injective linear form $\ell:\N^d\rightarrow \R^+$ and let $\hat{f}\in \hat{\mathcal{O}}^{P,s}(E)$. Then there is $\rho>0$ and a sequence $\{g_n\}_{n\in\N}\subset \mathcal{O}_b(D_\rho^d,E)$ with $J(g_n)\in\Delta_\ell(P,E)$, for all $n\in\N$ such that $\hat{f}=\sum_{n=0}^\infty g_n \PP^n$ and $\|g_n(\xx)\|\leq CA^n n!^s$, for all $|\xx|\leq \rho$, $n\in\N$. 
		
To prove (1) note that $\frac{\d\hat{f}}{\d x_j}=\sum_{n=0}^\infty \left(\frac{\d g_n}{\d x_j}+(n+1)g_{n+1}\frac{\d P}{\d x_j}\right) \PP^n$ and then $\frac{\d\hat{f}}{\d x_j}\in \hat{\mathcal{O}}^{P,s}(E)$ follows from Cauchy inequalities applied to $g_n$ and Lemma \ref{Characterization of P-s-Gevrey}.

For (2) we recall that the limit \begin{equation}\label{Limit}
\lim_{n\to\infty}\frac{(nk)!^{1/k}}{k^n n!}n^{\frac12-\frac1{2k}}=\frac{(2\pi k)^{\frac{1}{2k}}}{\sqrt{2\pi}},\quad \text{ for any integer }k\geq1,
\end{equation} allows to interchange, up to a geometric factor of $n$, the terms $(nk)!^{1/k}$ and $n!$. To see that $\hat{\mathcal{O}}^{P,s}(E)\subseteq \hat{\mathcal{O}}^{P^N,Ns}(E)$ let $\hat{f}\in \hat{\mathcal{O}}^{P,s}(E)$ as before. Then $\hat{f}=\sum_{n=0}^\infty h_n\PP^{Nn}$, $h_n=\sum_{j=0}^{N-1} g_{nN+j}\PP^j$. Using the limit (\ref{Limit}) we can find constants $B,D>0$ such that $\|h_n(x)\|\leq DB^n n!^{Ns}$, for all $|\xx|\leq \rho$, $n\in\N$ and thus $\hat{f}\in \hat{\mathcal{O}}^{P^N,Ns}(E)$. Conversely, assume $\hat{f}=\sum_{n=0}^\infty \overline{g}_n \PP^{nN}\in \hat{\mathcal{O}}^{P^N,Ns}(E)$ and $\|\overline{g}_n(\xx)\|\leq CA^{n}n!^{Ns}$, for all $|\xx|\leq \rho$, $n\in\N$. Then $\hat{f}=\sum_{m=0}^\infty \overline{h}_m \PP^{m}$, where $\overline{h}_m=\overline{g}_n$ if $m=Nn$ and $0$ otherwise. The limit (\ref{Limit}) implies once more the required bounds for the $\overline{h}_m$ and then $\hat{f}\in \hat{\mathcal{O}}^{P,s}(E)$.

Finally to prove (3) assume $P=Q\cdot R$, where $R\in\mathcal{O}_b(D^d_\rho,\C)$. Then $\hat{f}=\sum_{n=0}^\infty (g_nR^n)\QQ^n$ and $\|g_n(\xx)R^n(\xx)\|\leq CB^n n!^s$, $B=A\cdot \sup_{|\xx|\leq \rho} |R(\xx)|$. Thus $\hat{f}\in \hat{\mathcal{O}}^{Q,s}(E)$ by Lemma \ref{Characterization of P-s-Gevrey}.
\end{proof}

For the case $P(\xx)=\xx^\aa$, we see that $\hat{f}$ is a $\xx^\aa$-$s$--Gevrey series if for some $r>0$, $\hat{T}_{\boldsymbol{\a}} \hat{f}\in\mathcal{E}_r^{\boldsymbol{\a}}[[t]]$ and it is a $s$--Gevrey series in $t$, i.e., there are constants $C,A>0$ such that $\|f_{\aa,n}\|\leq CA^n n!^s$, for all $n\in\N$. This condition can be directly identified from the coefficients of $\hat{f}$. The proof is the same as in \cite[Lemma 3.1]{C}	but it is included here for sake of completeness. {It is worth mentioning that Lemma \ref{Bounds for formal gevrey series} (2) below is the crucial point to prove our main result, namely, Theorem \ref{Main Result}.}

\begin{lema}\label{Bounds for formal gevrey series}Assume the entries of $\aa,\aa'\in\N^d$ are not zero and let $\hat{f}=\sum f_{\boldsymbol{\beta}}\boldsymbol{x}^{\boldsymbol{\beta}}\in \hat{\mathcal{O}}(E)$ be a series. Then:\begin{enumerate}\item  $\hat{f}\in E[[\boldsymbol{x}]]_{s}^{\boldsymbol{\a}}$ if and only if there are constants $C,A>0$ satisfying $$\|f_{\boldsymbol{\beta}}\|\leq CA^{|\boldsymbol{\beta}|}\min_{1\leq j\leq d} \beta_j!^{s/\a_j},\quad \boldsymbol{\beta}\in\N^d.$$ In particular, we obtain again that  $E[[\xx]]_{Ns}^{N\aa}=E[[\xx]]_{s}^{\aa}$ for all $N\in\N^+$.
		
\item If $\hat{f}\in E[[\boldsymbol{x}]]_{s}^{\boldsymbol{\a}'}$, then there exist $r>0$ such that $\hat{T}_{\boldsymbol{\a}} \hat{f} \mid_{D^d_r}$ is a $\max_{1\leq j\leq d}\{\a_j/\a_j'\}s$--Gevrey series in some $\mathcal{E}^{\boldsymbol{\a}}_r$.\end{enumerate}\end{lema}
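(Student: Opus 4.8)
The plan is to translate both statements into estimates on Taylor coefficients and then into Gevrey bounds in the single variable $t$, using the two elementary factorial inequalities $(m!)^k\le(mk)!$ and $(mk)!\le k^{mk}(m!)^k$, valid for all $m,k\in\N^+$ (both follow by bounding the multinomial coefficient $\binom{mk}{m,\dots,m}$ between $1$ and $k^{mk}$). The characterization recalled just before the lemma says that $\hat{f}\in E[[\xx]]_s^{\aa}$ is equivalent to $\hat{T}_{\aa}\hat{f}=\sum_n \hat{f}_{\aa,n}t^n$ lying in some $\mathcal{E}_r^{\aa}[[t]]$ and being $s$--Gevrey in $t$, so everything reduces to controlling the sup-norms $\|f_{\aa,n}\|_{D_r^d}$ against a coefficient bound of the form $\|f_{\gg}\|\le CA^{|\gg|}\min_j\gamma_j!^{s/\a_j}$.

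For part (1), the basic observation is that decomposition (\ref{para definir Tpq}) writes any index as $\gg=n\aa+\bb$ with $\aa\not\le\bb$, and that the corresponding $n$ is exactly $n=\min_{1\le j\le d}\lfloor\gamma_j/\a_j\rfloor$; in particular $n\a_j\le\gamma_j$ for every $j$. For the forward implication I would start from $\|f_{\aa,n}\|_{D_r^d}\le CA^n n!^s$, apply Cauchy's inequalities to get $\|f_{n\aa+\bb}\|\le CA^n n!^s r^{-|\bb|}$, and then use $(n!)^{\a_j}\le(n\a_j)!\le\gamma_j!$ (first inequality, then monotonicity) to conclude $n!\le\min_j\gamma_j!^{1/\a_j}$; since $n\le|\gg|$ and $|\bb|\le|\gg|$, collecting the geometric factors yields $\|f_{\gg}\|\le C'(A')^{|\gg|}\min_j\gamma_j!^{s/\a_j}$. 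For the converse I would estimate $\|f_{\aa,n}\|_{D_r^d}\le\sum_{\aa\not\le\bb}\|f_{n\aa+\bb}\|r^{|\bb|}$; choosing for each such $\bb$ an index $j_0$ with $\beta_{j_0}<\a_{j_0}$ gives $\min_j(n\a_j+\beta_j)!^{s/\a_j}\le((n+1)\a_{j_0})!^{s/\a_{j_0}}$, and the second factorial inequality bounds this by $(\a_{j_0}^{\,n+1}(n+1)!)^s\le(\bar a^{\,n+1}(n+1)!)^s$ with $\bar a=\max_j\a_j$, uniformly in $\bb$. Taking $r<1/A$ makes $\sum_{\bb}(Ar)^{|\bb|}$ converge, and absorbing $(n+1)!^s$ and the remaining powers into geometric factors produces $\|f_{\aa,n}\|_{D_r^d}\le C'(A')^n n!^s$; this simultaneously shows $f_{\aa,n}\in\mathcal{E}_r^{\aa}$ and that the series is $s$--Gevrey in $t$. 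The equality $E[[\xx]]_{Ns}^{N\aa}=E[[\xx]]_s^{\aa}$ is then immediate, since the coefficient criterion reads $\min_j\gamma_j!^{Ns/(N\a_j)}=\min_j\gamma_j!^{s/\a_j}$.

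Part (2) follows the same pattern as the converse in (1), now starting from the hypothesis rewritten via (1) as $\|f_{\gg}\|\le CA^{|\gg|}\min_j\gamma_j!^{s/\a_j'}$. The only new point is to track the exponent: for $j_0$ with $\beta_{j_0}<\a_{j_0}$ one gets $\min_j(n\a_j+\beta_j)!^{s/\a_j'}\le((n+1)\a_{j_0})!^{s/\a_{j_0}'}$, and writing $((n+1)\a_{j_0})!^{1/\a_{j_0}'}=\left(((n+1)\a_{j_0})!^{1/\a_{j_0}}\right)^{\a_{j_0}/\a_{j_0}'}$ the second factorial inequality gives the bound $(\a_{j_0}^{\,n+1}(n+1)!)^{\a_{j_0}/\a_{j_0}'}$. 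Since this base is $\ge1$ and $\a_{j_0}/\a_{j_0}'\le\kappa:=\max_j\a_j/\a_j'$, it is dominated by $(\bar a^{\,n+1}(n+1)!)^{\kappa}$, and the same summation over $\{\bb:\aa\not\le\bb\}$ (with $r<1/A$) together with geometric bookkeeping give $\|f_{\aa,n}\|_{D_r^d}\le C'(A')^n n!^{\kappa s}$, i.e.\ $\hat{T}_{\aa}\hat{f}\mid_{D_r^d}$ is $\kappa s$--Gevrey in some $\mathcal{E}_r^{\aa}$. I expect the main obstacle to be precisely this combinatorial core---relating $n!$ with $n=\min_j\lfloor\gamma_j/\a_j\rfloor$ to $\min_j\gamma_j!^{1/\a_j}$ in both directions, and controlling the mixed exponent $\a_{j_0}/\a_{j_0}'$ in part (2)---whereas the convergence of the infinite sum over $\{\bb:\aa\not\le\bb\}$ (which forces the shrinking radius $r<1/A$) and the passage between $(n+1)!$ and $n!$ are routine once the right $r$ is fixed.
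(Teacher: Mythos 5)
Your proposal is correct and takes essentially the same route as the paper: Cauchy's inequalities applied to $f_{\aa,n}$ at the origin (together with $n!^{\a_j}\le (n\a_j)!\le \gamma_j!$, which the paper leaves implicit) for the forward direction of (1), and a direct estimate of decomposition (\ref{para definir Tpq}) over $\{\bb \mid \aa\not\leq\bb\}$ using the bound $(n\a_j+\beta_j)!\le((n+1)\a_j)!$ at an index $j_0$ with $\beta_{j_0}<\a_{j_0}$, a geometric series with $rA<1$, and the exponent control $\a_{j_0}/\a_{j_0}'\le\max_j\{\a_j/\a_j'\}$ for the converse and for (2). The only cosmetic difference is that you interchange $(mk)!^{1/k}$ and $m!$ via the elementary multinomial bounds $(m!)^k\le(mk)!\le k^{mk}(m!)^k$, whereas the paper invokes the Stirling-type limit (\ref{Limit}); both give the same interchange up to geometric factors.
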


\begin{proof}$(1)$ Assume there are constants $B,D>0$ such that $\|f_{\aa,n}\|\leq DB^n n!^s$, for all $n\in\N$. Given $\boldsymbol{\gamma}\in\N^d$, let $n=\min_{1\leq j\leq d}\lfloor\gamma_j/\a_j\rfloor$, where $\lfloor\cdot\rfloor$  denotes the floor function. Thus $\boldsymbol{\gamma}=n\aa+\bb$ with $\beta_l<\a_l$ for some $l$. Then by Cauchy's inequalities we see that $$\|f_{\boldsymbol{\gamma}}\|=\|f_{n\boldsymbol{\a}+\boldsymbol{\beta}}\|=\left\|\frac{1}{\boldsymbol{\beta}!}\frac{\d^{\boldsymbol{\beta}}f_{\aa,n}}{\d \boldsymbol{x}^{\boldsymbol{\beta}}}(\boldsymbol{0})\right\|\leq\frac{DB^n}{r^{|\boldsymbol{\beta}|}}n!^s,$$ which yields one implication. The converse follows by the same argument as in (2) below.
 	
 	$(2)$ If $\|f_{\boldsymbol{\beta}}\|\leq CA^{|\boldsymbol{\beta}|}\min_{1\leq j\leq d}\{\beta_j!^{s/\a_j'}\}$, for all $\boldsymbol{\beta}\in\N^d$, we can directly estimate the growth of the $f_{\aa,n}$ by means of equation (\ref{para definir Tpq}): if $|\xx|<r$,  and $rA<1$ we obtain\begin{align*}
 	\|f_{\aa,n}(\boldsymbol{x})\|=\left\|\sum_{\aa\not\leq\gg} f_{n\aa+\gg} \xx^\gg\right\|&\leq \sum_{j=1}^{d}\sum_{\beta_j=0}^{\a_j-1} \sum_{\gg\in \N^d, \gamma_j=\b_j} CA^{n|\aa|+|\gg|} r^{|\gg|} \min_{1\leq l\leq d}\{(n\a_l+\gamma_l)!^{s/\a_l'}\}\\
 	&\leq \frac{CA^{n|\boldsymbol{\a}|}}{(1-rA)^{d-1}}\sum_{j=1}^{d}\sum_{\beta_j=0}^{\a_j-1}   (n\a_j+\beta_j)!^{s/\a_j'}(rA)^{\beta_j}.
 	\end{align*} If we write $s'=\max_{1\leq j\leq d} \{\a_j /\a_j'\} s$, using the limit (\ref{Limit}) we can find constants $B,D>0$ such that $$(n\a_j+\beta_j)!^{s/\a_j'}\leq (\a_j(n+1))!^{s/\a_j'}\leq DB^n(n+1)!^{s \a_j /\a_j'}\leq DB^n(n+1)!^{s'}\leq D(2^{s'}B)^n n!^{s'},$$ for all $n\in\N$. Then it is clear that we can find constants $K,M>0$ such that $\|f_{\aa,n}(\boldsymbol{x})\|\leq KM^n n!^{s'}$, for all $|\xx|<r$ and all $n\in\N$, as we wanted to show.
 \end{proof}

For convergent series, i.e., for $s=0$, we also see directly that $\hat{f}\in \mathcal{O}(E)$ if and only if $\hat{T}_{\aa}\hat{f}\in \mathcal{E}^{\aa}_r\{t\}$ for some $r>0$. For the general case we also have $\hat{f}\in \mathcal{O}(E)$ if and only if $\hat{T}_{\PP,\ell}\hat{f}\in \mathcal{E}^{\PP}_{\ell,r}\{t\}$ for some $r>0$. One implication is the content of Remark \ref{Remark 1}(2). The converse follows by simply replacing $t=\PP(\xx)$.

\begin{defi}\label{Def Pasym Gevrey} Let $\Pi_\PP=\Pi_\PP(a,b;\RR)$ be a $\PP$-sector, $f\in\mathcal{O}(\Pi_\PP,E)$ and $\hat{f}\in\mathcal{O}(E)$. We will say that $f$ has $\hat{f}$ as \textit{$\PP$-$s$--Gevrey asymptotic expansion on $\Pi_\PP$} if $f\sim^\PP \hat{f}$ on $\Pi_\PP$ and furthermore:
	
	\begin{enumerate}
	\item One of the sequences $\{f_n\}_{n\in\N}$ of Definition \ref{Def P-asym} satisfies $\|f_N(\xx)\|\leq KA^N N!^s$, for all $N\in\N$, $|\xx|<r$. In this case $\{f_n\}_{n\in\N}$ is called a \textit{$\PP$-$s$--asymptotic sequence for $\hat{f}$}.
	
	\item We can find constants $C,A>0$ such that $C_N=CA^N N!^s$, where $C_N$ is the constant in inequality (\ref{Def P-asym inq}).
	\end{enumerate}
This notion is independent of the choice of the $\PP$-$s$--asymptotic series. We will denote this by $f\sim^{\PP}_{s} \hat{f}$ on $\Pi_{\PP}$. If $\PP(\xx)=\xx^\aa$, we will write $f\sim^{\aa}_{s} \hat{f}$ on $\Pi_{\aa}$.
\end{defi}

\begin{nota}\label{Remark 3}We remark the following facts on the notion of $\PP$-$s$--Gevrey asymptotic expansions:\begin{enumerate}
\item Definition \ref{Def Pasym Gevrey} is independent of the choice of the $\PP$-$s$--asymptotic sequence for $\hat{f}$.

\item The analog of Theorem \ref{C P-asym in terms of Tl} holds in this setting:
Let $\ell:\N^d\rightarrow \R^+$ be an injective linear form. Then $f\sim_s^\PP \hat{f}$ on $\Pi_\PP=\Pi_\PP(a,b;\RR)$ if and only if there exists $\rho>0$ such that $\hat{T}_{\PP,\ell}\hat{f}\mid_{D_\rho^d}=\sum_{n=0}^\infty f_{\PP,\ell,n}  t^n\in \mathcal{E}^\PP_{\ell,\rho}[[t]]$ is a formal $s$--Gevrey series and one of the following two equivalent conditions holds:\begin{enumerate}
		\item For every $\Pi'_\PP\subset \Pi_\PP$, there exist $C,B>0$ such that for every $N\in\N$ \begin{equation}\label{Formula P-s-asym}
		\left\|f(\boldsymbol{x})-\sum_{n=0}^{N-1}f_{\PP,\ell,n}(\xx)\PP(\xx)^n \right\|
       \leq C  B^N N!^s|\PP(\xx)|^N,\quad \text{ on } \Pi_{\PP}'\cap D_\rho^d.
		\end{equation}
		
		\item The function $T_{\PP,\ell}f$ from Theorem \ref{Existence TlP} is defined on $V(a,b;\sigma)\times D_\rho^d\rightarrow\C$ for some positive $\sigma$ and satisfies $$T_{\PP,\ell} f\sim_s \hat{T}_{\PP,\ell}\hat{f}\mid_{D_\rho^d} \text{ as } V (a,b;\sigma)\ni t\rightarrow 0.$$
	\end{enumerate}

\item If $f\sim_s^\PP \hat{f}$ on $\Pi_\PP=\Pi_\PP(a,b;\RR)$ and $\hat f$ and $P$ are divisible by some $x_j$, $j=1,\dots,d$, then $x_j^{-1}f\sim_s^\PP x_j^{-1}\hat{f}$ on $\Pi_P$. Indeed, assume that (\ref{Formula P-s-asym}) holds. The hypotheses on divisibility imply that $x_j^{-1}f_{P,\ell,0}(\xx)$ is analytic at the origin. Thus we can divide (\ref{Formula P-s-asym}) by $x_j$ to obtain $$\left\|x_j^{-1}f(\xx)-x_j^{-1}f_{P,\ell,0}(\xx)-\sum_{n=1}^{N-1}f_{P,\ell,n}(\xx)Q(\xx)P(\xx)^{n-1}\right\|\leq CK A^N N! |P(\xx)|^{N-1},$$ where $Q(\xx)=x_j^{-1}P(\xx)$ and $K=\sup_{\xx\in \Pi_\PP'}|Q(\xx)|$.

\item In the monomial case, if $f\sim^{\aa}_{s} \hat{f}$ on $\Pi_{\aa}$ in the sense of Theorem \ref{C P-asym in terms of Tl} (1), then it follows from inequalities (\ref{Formula Pasym}) that $\hat{f}\in E[[\boldsymbol{x}]]^{\boldsymbol{\a}}_{s}$ \cite[Prop. 3.11, Remark 3.12]{Sum wrt germs}.

\item $\PP$-$s$--asymptotic expansions are stable under addition and partial derivatives and if $E$ is a Banach algebra, they are stable under products as well. They are stable under left composition with analytic functions as well.
\end{enumerate}
\end{nota}

We can compare asymptotic expansions in different powers of some analytic germ. On this matter we will use the following lemma, proof of which follows the same lines as in \cite[Prop. 3.5]{CM} for the particular case treated there.

\begin{lema}\label{P and PN} $f\sim_s^\PP\hat{f}$ on $\Pi_\PP(a,b;\RR)$ if and only if $f\sim_{Ms}^{\PP^M}\hat{f}$ on $\Pi_\PP(a/M,b/M;\RR)$ for any $M\in\N^+$.
\end{lema}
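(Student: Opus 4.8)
The plan is to reduce the statement to the one–variable situation via the transfer operator $T_{\PP,\ell}$ and the characterization of $\PP$-$s$--Gevrey asymptotics in Remark \ref{Remark 3}(2), together with a direct comparison between $\PP$-sectors and $\PP^M$-sectors. First I would fix an injective linear form $\ell:\N^d\to\R^+$; note that the same $\ell$ serves to order monomials for $\PP$ and for $\PP^M$, since $\nu_\ell(\PP^M)=M\nu_\ell(\PP)$ and hence $\Delta_\ell(\PP^M,E)$ relates to $\Delta_\ell(\PP,E)$ through the multiplicative structure. The geometric observation underlying the change $a\mapsto a/M$, $b\mapsto b/M$ is that $\arg(\PP(\xx)^M)=M\arg(\PP(\xx))$, so a choice of branch identifies the $\PP^M$-sector $\Pi_{\PP^M}(a,b;\RR)$ with the $\PP$-sector $\Pi_\PP(a/M,b/M;\RR)$ as \emph{subsets of $\C^d$}: the constraints $0<|x_j|<R_j$ are identical and $a<\arg(\PP^M(\xx))<b$ is equivalent to $a/M<\arg(\PP(\xx))<b/M$. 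Thus $f$ lives on the same domain in both formulations, and only the exponent and Gevrey index in the asymptotic estimate change.

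Next I would translate the estimate. By Remark \ref{Remark 1}(1) the coefficient sequence for $\PP^M$ is built from that for $\PP$ via $\hat{f}_{\PP^M,\ell,n}=\sum_{j=0}^{M-1}\hat{f}_{\PP,\ell,nM+j}\PP^j$, so if $\hat T_{\PP,\ell}\hat f\mid_{D_\rho^d}=\sum f_{\PP,\ell,n}t^n$ is $s$--Gevrey in $t$, then $\hat T_{\PP^M,\ell}\hat f\mid_{D_\rho^d}=\sum f_{\PP^M,\ell,n}t^n$ is $Ms$--Gevrey, and conversely; this is exactly the content of part (2) of the Corollary after Lemma \ref{Characterization of P-s-Gevrey} (namely $\hat{\mathcal{O}}^{P^M,Ms}(E)=\hat{\mathcal{O}}^{P,s}(E)$), whose proof rests on the factorial limit \eqref{Limit}. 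For the analytic tail estimate I would work with the partial sums: observe that
\begin{equation*}
\sum_{n=0}^{NM-1} f_{\PP,\ell,n}\PP^n=\sum_{n=0}^{N-1} f_{\PP^M,\ell,n}\PP^{Mn},
\end{equation*}
so that the truncation of the $\PP^M$-expansion at order $N$ coincides with the truncation of the $\PP$-expansion at order $NM$. The inequality \eqref{Formula P-s-asym} for $\PP$ read along the subsequence of indices $N'=NM$ gives a bound of the form $C\,B^{NM}(NM)!^{s}\,|\PP(\xx)|^{NM}=C\,B^{NM}(NM)!^{s}\,|\PP^M(\xx)|^{N}$, and the limit \eqref{Limit} converts $(NM)!^{s}$ into $N!^{Ms}$ up to a geometric factor, yielding precisely the $\PP^M$-$Ms$ estimate. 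Conversely, the $\PP^M$ estimates give \eqref{Formula P-s-asym} for $\PP$ only along the arithmetic progression $N'\equiv 0\pmod M$, so the remaining residues must be recovered.

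The main obstacle is exactly this last point: the definition of $\sim_s^\PP$ requires inequality \eqref{Formula P-s-asym} for \emph{every} $N$, whereas the $\PP^M$ hypothesis supplies it only for $N$ divisible by $M$. I would close this gap by invoking Remark \ref{Lema polinomios}, which asserts that controlling the Gevrey asymptotics along the subsequence $N=Mp$ suffices; the mechanism is Lemma \ref{Vandermonde}, applied to the one–variable transform $T_{\PP,\ell}f$. Concretely, passing through Remark \ref{Remark 3}(2) I would phrase the equivalence at the level of $T_{\PP,\ell}f\sim_s\hat T_{\PP,\ell}\hat f$ in the single variable $t$, where the statement becomes the classical fact that $g(t)\sim_s\hat g(t)$ as $V(a,b;\sigma)\ni t\to 0$ is equivalent to $g(\tau^M)\sim_{Ms}\hat g(\tau^M)$ as $V(a/M,b/M;\sigma^{1/M})\ni\tau\to 0$; here $t=\tau^M$ is the one–variable ramification $r_M$ and the Gevrey indices transform by the Stirling asymptotics \eqref{Limit}. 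Lemma \ref{Vandermonde} supplies the intermediate coefficients omitted by the subsequence argument, so that the residue classes $N'\not\equiv 0\pmod M$ are filled in automatically, completing the converse and hence the equivalence.
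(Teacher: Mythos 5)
Your proposal is correct and follows essentially the same route as the paper's proof: the forward direction via the partial-sum identity from Remark \ref{Remark 1}(1), the inequalities (\ref{Formula P-s-asym}) along the subsequence $N=Mp$, and the limit (\ref{Limit}); and the converse via the transfer to the one-variable function $T_{\PP,\ell}f$ through Theorem \ref{Existence TlP}(2), with Lemma \ref{Vandermonde} recovering the intermediate coefficients and Remark \ref{Lema polinomios} filling in the indices $N\not\equiv 0 \pmod M$. The only loose point is your gloss that the converse \emph{becomes} the classical one-variable ramification fact --- strictly, $T_{\PP^M,\ell}f(t,\xx)$ is not $T_{\PP,\ell}f(\tau,\xx)$ with $t=\tau^M$, since its $t$-coefficients are the blocks $\sum_{j=0}^{M-1}f_{\PP,\ell,nM+j}\PP^j$ rather than the $f_{\PP,\ell,n}$ themselves --- but since you actually run the subsequence--Vandermonde mechanism directly on $T_{\PP,\ell}f$, exactly as the paper does, this does not affect the validity of the argument.
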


\begin{proof} Let us fix an injective linear form $\ell:\N^d\rightarrow\R^+$. Using Remark \ref{Remark 1} (1), we see that $$\hat{T}_{\PP^M,\ell}\hat{f}=\sum_{n=0}^\infty (f_{\PP,\ell,nM}+f_{\PP,\ell,nM+1}\PP+\cdots+f_{\PP,\ell,nM+M-1}\PP^{M-1})t^n,\quad \hat{f}\in \hat{\mathcal{O}}^\PP(E).$$

If $f\sim_s^\PP\hat{f}$ on $\Pi_\PP(a,b;\RR)$, we can conclude that	also $f\sim_{Ms}^{\PP^M}\hat{f}$ on $\Pi_\PP(a/M,b/M;\RR)$ using inequalities (\ref{Formula Pasym}) for the values $N=Mp$, $p\in\N$ and the limit (\ref{Limit}) for $k=M$ to adjust the constant $L_{pM}=LB^{pM}(pM)!^s$ to a constant of the form $L'_p=L'D^p p!^{Ms}$.

Conversely, if $f\sim_{Ms}^{\PP^M}\hat{f}$ on $\Pi_\PP(a/M,b/M;\RR)$, this implies that inequalities (\ref{Formula Pasym}) hold only for the values $N=pM$, $p\in\N$:$$\left\|f(\boldsymbol{x})-\sum_{n=0}^{Mp-1}f_{\PP,\ell,n}(\xx)\PP(\xx)^n \right\|\leq C'B^p p!^{Ms}|\PP(\xx)|^{Mp}.$$ We can apply Theorem \ref{Existence TlP} (2) with $K(u)=u^{Mp}$ to conclude that \begin{equation}\label{Inq TPl power}
\left\|T_{\PP,\ell}f(t,\xx)-\sum_{n=0}^{Mp-1} f_{\PP,\ell,n}(\xx)t^n \right\|\leq LC'B^p p!^{Ms}|t|^{Mp-1},
\end{equation} in the corresponding sector but with $|t|<\sigma$ and $|\xx|<\rho$, where $\sigma,\rho>0$ are small enough. Using (\ref{Inq TPl power}) for $p$ and $p+1$ we conclude that $$\|f_{\PP,\ell,Mp}(\xx)t+ f_{\PP,\ell,Mp+1}(\xx)t^2+\cdots+ f_{\PP,\ell,Mp+M-1}(\xx)t^{M}\|\leq LC'B^p p!^{Ms}+LC'B^{p+1}(p+1)!^{Ms}|t|^M,$$ in the same domain. Applying Lemma \ref{Vandermonde} with $K(u)=LC'B^p p!^{Ms}\left(1/u+B(p+1)^{Ms}u^{M-1}\right)$ we can conclude that $\hat{T}_{\PP,\ell}\hat{f}$ is indeed $s$--Gevrey in $t$, i.e., there are constants $C,A>0$ such that $\|f_{\PP,\ell,n}(\xx)\|\leq CA^n n!^s$, for all $|\xx|<\rho$ and $n\in\N$. Using again (\ref{Inq TPl power}) for $p$ and $p+1$ it is straightforward to check that $$\left\|T_{\PP,\ell}f(t,\xx)-\sum_{n=0}^{Mp-1} f_{\PP,\ell,n}(\xx)t^n \right\|\leq C''(A'')^p (pM)!^{s}|t|^{Mp},$$ for large enough constants $C'', A''>0$ independent of $p$. An application of Remark \ref{Lema polinomios} shows that $T_{\PP,\ell} f\sim_s \hat{T}_{\PP,\ell}\hat{f}$ in $V(a,b;\sigma)$ as we wanted to show.
\end{proof}

For Gevrey asymptotic expansions in one variable, we know that $f\sim_s 0$ on $S$ if and only if for every subsector $S'\subset S$ there are constants $C,A>0$ such that $\|f(t)\|\leq C\exp(-1/A|t|^{1/s}),$ $t\in S'$. Furthermore the cornerstone to define $k$--summability in one variable is \textit{Watson's lemma}: if $f\sim_s 0$ on $S(\theta,b-a;r)$ and $b-a> s\pi$, then $f\equiv 0$. Then it is natural to say for $\hat{f}\in \hat{\mathcal{O}}(E)$, $k>0$ and $\theta\in\R$ that: \begin{enumerate} \item The series $\hat{f}$ is \textit{$k$--summable on $S=S(\theta,b-a;r)$ with sum $f\in\mathcal{O}(S,E)$} if $b-a>\pi/k$ and $f\sim_{1/k} \hat{f}$ on $S$. We also say that $\hat{f}$ is \textit{$k$--summable in the direction $\theta$}. The corresponding space is denoted as $E\{x\}_{1/k,\theta}$.
	
	\item The series $\hat{f}$ is \textit{$k$--summable} if it is $k$--summable in all directions up to a finite number of them mod. $2\pi$ (the singular directions). The corresponding space is denoted as $E\{x\}_{1/k}$.\end{enumerate}

For proofs in Section \ref{Tauberian properties for summability}, we recall the following characterization of $k$--summability in terms of Borel-Laplace transformations.

\begin{prop}\label{laplace}
A series $\hat f(x)=\sum_{n=0}^\infty a_n x^n\in E[[ x]]_{1/k}$ is $k$--summable 
in a direction $\theta$ if and only if the following statements hold:
\begin{enumerate}
\item Its formal Borel transform $g(t)=\sum_{n=0}^\infty {a_n} \xi^n/{\Gamma(1+n/k)}$ is 
analytic in a neighborhood of the origin.
\item The function $g$ can be continued analytically in some infinite sector 
$S=V(\theta-\delta,\theta+\delta;\infty)$ containing the ray $\arg \xi=\theta$.
\item It has exponential growth there, i.e.,\ 
there are positive constants such that
$$
\|g(\xi)\|\leq C\cdot \exp\left({A}/{|\xi|^{k}}\right),\quad \xi\in S.
$$
\end{enumerate} Hence the Laplace integral
$f(x)=\int_{\arg \xi=\tilde \theta} e^{-(\xi/x)^k}
\ {g} (\xi) d(\xi/x)^k$ defining the
sum of $\hat f$ converges for $x$ in a certain sector
$V=V\left(\theta-\pi/2k-\tilde\delta/k,\theta+\pi/2k+\tilde\delta/k;r\right)$,
$0<\tilde\delta<\delta$, and suitably chosen $\tilde\theta$ close to $\theta$.
It satisfies $f\sim_{1/k}\hat f$ on $V$.
\end{prop}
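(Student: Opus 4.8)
The plan is to establish the equivalence through the classical $k$-Borel and $k$-Laplace transforms, handling the two implications separately. First I observe that condition (1) is automatic for any $\hat f\in E[[x]]_{1/k}$ and uses no summability: from $\|a_n\|\le CA^n n!^{1/k}$, Stirling's formula and $\Gamma(1+n/k)\sim(n/k)!$ give $\|a_n\|/\Gamma(1+n/k)\le C'(Ak^{1/k})^n$, so $g(\xi)=\sum_n a_n\xi^n/\Gamma(1+n/k)$ has positive radius of convergence at the origin. Hence the genuine content is the equivalence between $k$-summability in the direction $\theta$ and the pair (2)--(3) governing the analytic continuation of $g$, together with the final assertion that the Laplace integral of such a $g$ reproduces the sum.

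For the implication (1)--(3) $\Rightarrow$ summability (the ``Hence'' part), I would define $f$ by the $k$-Laplace integral along a ray $\arg\xi=\tilde\theta$ with $\tilde\theta$ close to $\theta$. Convergence on the stated sector $V$ follows because for $x\in V$ one can choose $\tilde\theta$ so that the real part of $(\xi/x)^k$ stays positive along the ray; then $e^{-(\xi/x)^k}$ decays of order $k$ in $|\xi|$, and this domination absorbs the exponential bound in (3) once $r$ is small. The admissible set of $\arg x$ is controlled by the requirement $|\tilde\theta-\arg x|<\pi/(2k)$ together with the freedom to move $\tilde\theta$ strictly inside $S$, which is exactly what produces the opening of $V$. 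To get $f\sim_{1/k}\hat f$, I would split $g(\xi)=\sum_{n=0}^{N-1}\frac{a_n}{\Gamma(1+n/k)}\xi^n+R_N(\xi)$ near the origin, integrate the polynomial part termwise via the moment identity $\int_{\arg\xi=\tilde\theta}e^{-(\xi/x)^k}\xi^n\,d(\xi/x)^k=\Gamma(1+n/k)\,x^n$ to recover $\sum_{n<N}a_nx^n$, and bound the remainder integral. Choosing the cut-off radius of the near-origin part proportionally to $1/|x|$ yields a Gevrey bound $\|f(x)-\sum_{n<N}a_nx^n\|\le KB^N N!^{1/k}|x|^N$; since the opening of $V$ exceeds $\pi/k$, this is precisely $k$-summability in the direction $\theta$.

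For the converse I would recover the continuation of $g$ from the sum $f$ on $S=S(\theta,b-a;r)$, $b-a>\pi/k$, by the inverse ($k$-Borel) transform: integrating $f(x)\exp((\xi/x)^k)$ against $d(\xi/x)^k$ over a truncated Hankel-type contour lying in $S$. Cauchy's theorem makes the value independent of the admissible contour, and this is what furnishes the analytic continuation of the Borel series into an infinite sector $V(\theta-\delta,\theta+\delta;\infty)$ as in (2), the half-opening $\delta$ being governed by the surplus $b-a-\pi/k>0$. The exponential growth (3) is then extracted from the Gevrey remainder estimates $\|f(x)-\sum_{n<N}a_nx^n\|\le CA^N N!^{1/k}|x|^N$ by inserting them into the contour integral and truncating at the least term, of order $N\approx c\,|\xi|^{k}$; this optimal-truncation estimate is the standard mechanism converting Gevrey-$1/k$ data for $f$ into exponential growth of order $k$ for $g$. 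I expect this to be the main obstacle, since it simultaneously requires the contour deformation giving analyticity on the full sector of opening $2\delta$ and the delicate least-term estimate yielding the precise growth.

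Finally, well-definedness and the fact that the two constructions are mutually inverse rest on Watson's lemma recalled above: uniqueness of Gevrey-$1/k$ sums on sectors of opening larger than $\pi/k$ shows that $f$ is independent of the auxiliary choices of $\tilde\theta$ and of the near-origin cut-off, and that Laplace after Borel returns the same $f$. As an alternative, one could reduce the whole statement to ordinary Borel summability ($k=1$) through the ramification relating the $k$-Borel/Laplace pair to the classical one, but the direct argument keeps the openings of the sectors transparent.
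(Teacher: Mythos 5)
The paper does not prove this proposition: it is recalled as a classical characterization of $k$--summability (the Ramis--Sibuya/Balser theory, cf.\ \cite{RS89,Balser2}), stated only to be quoted in the proofs of Theorem \ref{no sing directions then convergence for monomials} and Lemma \ref{tauberian general case}. Your reconstruction is the standard proof from that literature and is essentially correct: condition (1) is indeed automatic from the Gevrey bound (the Borel coefficients $a_n/\Gamma(1+n/k)$ grow at most geometrically, by the limit (\ref{Limit}) or Stirling); the Laplace direction works via the moment identity $\int_{\arg\xi=\tilde\theta} e^{-(\xi/x)^k}\xi^n\,d(\xi/x)^k=\Gamma(1+n/k)\,x^n$ together with a remainder estimate; and the converse uses the truncated inverse transform, with analyticity of the continuation governed by the surplus opening $b-a-\pi/k$ and the growth (3) extracted by least-term truncation at $N\approx c\,|\xi|^k$. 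One small slip: in the Laplace direction, cutting the near-origin part at a radius proportional to $1/|x|$ is not meaningful, since the Borel series converges only on a fixed disc; the standard mechanism is either a fixed cutoff $\rho$, with the far-field contribution $O(e^{-c/|x|^k})$ absorbed into the Gevrey bound since $\inf_N B^NN!^{1/k}|x|^N$ is of that order, or the uniform estimate $\|g(\xi)-\sum_{n<N}a_n\xi^n/\Gamma(1+n/k)\|\leq KB^N|\xi|^N\exp(A|\xi|^k)$ along the ray, after which the moment integral directly produces the factor $\Gamma(1+N/k)$. With that correction your argument is complete and agrees with the classical proof the paper implicitly invokes.
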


For null $\PP$-$s$--Gevrey asymptotic expansions we have the two similar statements: First, $f\sim^\PP_{1/k} \hat{0}$ on $\Pi_\PP$ if and only if for every subsector $\Pi_\PP'\subset \Pi_\PP$ there are constants $C,A>0$ such that $$\|f(\xx)\|\leq C\exp(-1/A|\PP(\xx)|^{k}),\quad \xx\in\Pi_\PP'.$$ Second, we have a version of \textit{Watson's lemma}: If  $f\sim^\PP_{1/k} \hat{0}$ on $\Pi_\PP(a,b;\RR)$ and $b-a>\pi/k$, then $f\equiv 0$. These statements justify the following definition.

\begin{defi}\label{ddd}Let $\hat{f}\in \hat{\mathcal{O}}(E)$, $k>0$ and $\theta\in\R$ be a direction.
	\begin{enumerate}
		\item The series $\hat{f}$ is called \textit{$\PP$-$k$--summable on $S_{\PP}=S_{\PP}(\theta,b-a,\RR)$} with sum $f\in\mathcal{O}(S_{\PP},E)$ if $b-a>\pi/k$ and $f\sim_{1/k}^{\PP} \hat{f}$ on $S_{\PP}$. We also say that $\hat{f}$ is \textit{$\PP$-$k$--summable in the direction $\theta$}. The space of $\PP$-$k$--summable series in the direction $\theta$ will be denoted by $E\{\xx\}^{\PP}_{1/k,\theta}$.
		\item The series $\hat{f}$ is called \textit{$\PP$-$k$--summable}, if it is $\PP$-$k$--summable in all directions up to a finite number of them mod. $2\pi$ (the singular directions). The corresponding space is denoted as $E\{\xx\}^{\PP}_{1/k}$.
\end{enumerate} \noindent If $\PP(\xx)=\xx^\aa$, we will simply write $E\{\xx\}^{\aa}_{1/k,\theta}$ and $E\{\xx\}^{\aa}_{1/k}$, respectively.
 	
Note that both $E\{\boldsymbol{x}\}^{\PP}_{1/k,\theta}$ and $E\{\boldsymbol{x}\}^{\PP}_{1/k}$ are vector spaces stable by partial derivatives and they inherit naturally a structure of algebra if $E$ is a Banach algebra.
\end{defi}

\begin{nota}\label{Remark 4} We emphasize the following properties that will be used in the next section:	
\begin{enumerate}
\item If $\PP,\QQ\in \mathcal{O}\setminus\{0\}$, $\PP(\00)=\QQ(\00)=0$, are associated, then it follows from Remark \ref{Remark 2}(4) that $$E\{\xx\}^{\PP}_{1/k,\theta}=E\{\xx\}^{\QQ}_{1/k,\theta},\quad \text{ and }\quad E\{\xx\}^{\PP}_{1/k}=E\{\xx\}^{\QQ}_{1/k}.$$

\item Lemma \ref{P and PN} implies that  $$E\{\xx\}^{\PP^N}_{N/k,N\theta}=E\{\xx\}^{\PP}_{1/k,\theta},\quad E\{\xx\}^{\PP^N}_{N/k}=E\{\xx\}^{\PP}_{1/k},\quad N\in\N^+.$$	

\item By Remark \ref{Remark 2}(5) we see that if $\hat{f}\in E\{\boldsymbol{x}\}^{\PP}_{1/k,\theta}$, then  $\hat{f}\circ r_m\in E\{\boldsymbol{x}\}^{\PP\circ r_m}_{1/k,\theta}$ and $\hat{f}\circ b_\xi\in E\{\boldsymbol{x}\}^{\PP\circ b_\xi}_{1/k,\theta}$, for all $m\geq 2$ and all $\xi\in\mathbb{P}_\C^1$. The converse is even more interesting and also true, although we will not use it in this paper.
\end{enumerate}
\end{nota}

\section{Tauberian properties for $\PP$-$k$--summability}\label{Tauberian properties for summability}

In one variable, we have the following classical statements providing tauberian properties for $k$--summability which we will generalize for $k$--summability in an analytic germ.

\begin{teor}\label{Tauberian classical} The followings statements are true for $0<k<k'$ and $0<k_0, k_1,\dots,k_n$:\begin{enumerate}\item If $\hat{f}\in E\{t\}_{1/k}$ has no singular directions, then it is convergent.

\item $E[[x]]_{1/k'}\cap E\{t\}_{1/k}=E\{t\}_{1/k'}\cap E\{t\}_{1/k}=E\{t\}.$

\item Consider $\hat{f}_j\in E\{t\}_{1/k_j}$ for $j=1,\dots,n$ and assume that $0<k_1<\cdots<k_n$. Then  $\hat{f}_1+\cdots+\hat{f}_n=0$ implies that $\hat f_j\in E\{t\}$, for all $j=1,\dots,n$.
	\end{enumerate}
\end{teor}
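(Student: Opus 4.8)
The plan is to prove this classical theorem about Gevrey summability in one variable via the Borel-Laplace characterization recalled in Proposition \ref{laplace}, reducing everything to the analytic continuation and growth behaviour of Borel transforms. The three statements are in fact intimately linked, so I would organize the proof around statement (2), derive (1) as a consequence of (2), and then obtain (3) by an inductive argument.

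First I would prove statement (2), which is the heart of the matter. The inclusion $E\{t\}\subseteq E\{t\}_{1/k'}\cap E\{t\}_{1/k}\subseteq E[[x]]_{1/k'}\cap E\{t\}_{1/k}$ is immediate, since a convergent series is trivially summable of every order and a sum of a $k'$-summable series is $1/k'$-Gevrey. The content is the reverse inclusion $E[[x]]_{1/k'}\cap E\{t\}_{1/k}\subseteq E\{t\}$. Let $\hat{f}$ be $1/k'$-Gevrey and also $k$--summable. Since $\hat{f}$ is $k$--summable, it has only finitely many singular directions, so in every non-singular direction $\theta$ its Borel transform of order $k$, say $g_k(\xi)=\sum a_n\xi^n/\Gamma(1+n/k)$, continues analytically to an infinite sector around $\arg\xi=\theta$ with exponential growth of order $k$. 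On the other hand, because $\hat{f}$ is $1/k'$-Gevrey with $k'>k$, the coefficients satisfy $\|a_n\|\leq BD^n n!^{1/k'}$; dividing by $\Gamma(1+n/k)$, whose growth is of order $n!^{1/k}$ with $1/k>1/k'$, shows that $g_k$ is in fact an \emph{entire} function of exponential order strictly less than $k$ (indeed of order at most $(1/k-1/k')^{-1}>k$, hence growth strictly slower than $k$). Thus $g_k$ is entire with exponential growth of order $<k$ in \emph{every} direction, and a Phragmén-Lindelöf argument upgrades the finitely many directional bounds to a global bound $\|g_k(\xi)\|\leq C\exp(A|\xi|^{k})$ valid on all of $\C$. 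The key technical point is then: an entire Borel transform that is globally of exponential order $k$ produces, via the Laplace integral in all directions, a function holomorphic in a full punctured neighbourhood of the origin whose asymptotic expansion is $\hat{f}$; by the uniqueness of asymptotic expansions and Riemann's removable singularity theorem this sum is holomorphic at $0$, so $\hat{f}$ converges.

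Statement (1) follows quickly from the same circle of ideas. If $\hat{f}\in E\{t\}_{1/k}$ has no singular directions, then its order-$k$ Borel transform $g_k$ continues analytically and has exponential growth of order $k$ in \emph{every} direction, hence defines a global object with the bound $\|g_k(\xi)\|\leq C\exp(A|\xi|^{k})$ on all rays; again Phragmén-Lindelöf gives a uniform such bound on all of $\C$, the Laplace integral taken in every direction glues to a single-valued holomorphic function on a full punctured disk with asymptotic expansion $\hat{f}$, and removability of the origin forces convergence. Alternatively one can simply invoke (2) by noting that a $k$--summable series with no singular directions is, via the global exponential bound, seen to be summable of all orders and in particular $1/k''$-Gevrey for some $k''>k$, placing it in $E[[x]]_{1/k''}\cap E\{t\}_{1/k}=E\{t\}$.

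Finally I would prove statement (3) by induction on $n$. The case $n=1$ is trivial. For the inductive step, apply the Borel transform of order $k_n$ (the largest) to the relation $\hat{f}_1+\cdots+\hat{f}_n=0$. Each $\hat{f}_j$ with $j<n$ is $1/k_j$-Gevrey with $k_j<k_n$, so by the computation in (2) its order-$k_n$ Borel transform is entire of exponential order strictly less than $k_n$; the order-$k_n$ Borel transform of $\hat{f}_n$ equals minus the sum of these, hence is also entire of order strictly below $k_n$, which forces $\hat{f}_n$ to be $1/k''$-Gevrey for some $k''>k_n$. Combined with $\hat{f}_n\in E\{t\}_{1/k_n}$, statement (2) yields $\hat{f}_n\in E\{t\}$. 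Subtracting the now-convergent $\hat{f}_n$ leaves $\hat{f}_1+\cdots+\hat{f}_{n-1}\in E\{t\}$, and absorbing this convergent remainder into any one term reduces to the relation $\hat{f}_1+\cdots+\hat{f}_{n-1}=(\text{convergent})$ among fewer summands; the induction hypothesis then gives $\hat{f}_j\in E\{t\}$ for all $j<n$ as well.

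I expect the main obstacle to be the Phragmén-Lindelöf step in (2) and (1): one must carefully control the opening angles of the finitely many sectors of exponential growth and verify that the aperture condition $b-a>\pi/k$ together with the order-$k$ growth is compatible with a Phragmén-Lindelöf principle that upgrades finitely many sectorial exponential bounds to a single global bound of the \emph{same} order $k$ on an entire function. The delicate accounting is that the entire function is a priori only known to be of order $\leq(1/k-1/k')^{-1}$, and one has to confirm that having exponential order-$k$ growth in the non-singular directions, across sectors whose union omits only finitely many rays, indeed propagates to a uniform order-$k$ bound everywhere rather than a weaker one.
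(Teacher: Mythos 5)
Your proposal contains a genuine error, and it is concentrated in statement (3): you run the induction from the wrong end of the chain $k_1<\cdots<k_n$. You apply the Borel transform of the \emph{largest} order $k_n$ to the relation and claim that each $\hat{f}_j$ with $j<n$, being $1/k_j$--Gevrey with $k_j<k_n$, has an order-$k_n$ Borel transform that is entire of order below $k_n$. The Gevrey scale runs the other way: $k_j<k_n$ means $1/k_j>1/k_n$, so the coefficients of that Borel transform are of size $m!^{1/k_j-1/k_n}$ up to geometric factors, which \emph{diverge} -- the order-$k_n$ Borel transform of a series that is only $1/k_j$--Gevrey is a purely formal object, not an entire function. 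Nor can the step be patched at the top: from the relation one only gets that $\hat{f}_n=-\sum_{j<n}\hat{f}_j$ is $1/k_1$--Gevrey, the \emph{coarsest} class, which is useless for applying (2) to $\hat{f}_n$. The same sign error infects your write-up of (2), where you assert that $g_k$ is ``entire of exponential order strictly less than $k$'': with $1/\mu=1/k-1/k'$ one has $\mu=(1/k-1/k')^{-1}>k$, so the a priori bound is of order $\mu>k$, i.e.\ \emph{weaker} than order-$k$ growth -- your own closing paragraph concedes exactly this. For (2) the mechanism is nevertheless salvageable, and it is precisely what the paper replays in the proof of Lemma \ref{tauberian general case} (following Ramis--Sibuya, Thm.\ 3.8.2): $g$ is entire of order at most $\mu$; one takes rays flanking a putative singular direction within $\delta<\pi/(2\mu)$, multiplies by $\exp\left(-D(\xi e^{-i\theta})^k\right)$ with $D\cos(k\delta)=C$, and applies Phragm\'en--Lindel\"of on the sector of opening $2\delta<\pi/\mu$; this shows there are no singular directions at all, and then (1) gives convergence. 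Had your ``order $<k$'' claim been true, no Phragm\'en--Lindel\"of would be needed in the first place.

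The correct induction for (3) runs from the bottom: all $\hat{f}_j$ with $j\geq 2$ lie in $E[[t]]_{1/k_2}$ (since $1/k_j\leq 1/k_2$), hence $\hat{f}_1=-(\hat{f}_2+\cdots+\hat{f}_n)\in E[[t]]_{1/k_2}$, and since also $\hat{f}_1\in E\{t\}_{1/k_1}$ with $k_1<k_2$, statement (2) yields $\hat{f}_1\in E\{t\}$; one then absorbs $\hat{f}_1$ into $\hat{f}_2$ and induces on $n$. This is exactly the scheme the paper uses in the $N=0$ step of the proof of Theorem \ref{Main Result}, where after ordering the couples it is the \emph{first} (lowest) series whose convergence is extracted via Lemma \ref{Bounds for formal gevrey series}(2) and Lemma \ref{tauberian general case}(1). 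Two further remarks on (1): your appeal to Phragm\'en--Lindel\"of there is superfluous and, as literally stated (ray bounds only, no a priori order control), not even applicable; Proposition \ref{laplace} gives order-$k$ bounds on full open sectors, so the compactness/finite-subcover argument -- as in the proof of Theorem \ref{no sing directions then convergence for monomials} -- already furnishes the global bound $\|g(\xi)\|\leq K\exp\left(A|\xi|^k\right)$ on $\C$. From there the paper concludes more directly than your gluing-of-Laplace-sums route: Cauchy's inequalities on disks of radius $\left(\frac{n}{Ak}\right)^{1/k}$ plus Stirling's formula bound the coefficients geometrically, giving convergence without any removable-singularity argument.
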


We will use that, by Remark \ref{Remark 3}(2), a series $\hat{f}$ is $\PP$-$k$--summable in some direction $\theta$ if and only if 
there exist $r=r_\theta>0$ such that $\hat{T}_{\PP,\ell}\hat{f} \mid_{D^d_{r_\theta}}$ is $k$--summable in direction $\theta$ in $\mathcal{E}^\PP_{\ell,r_\theta}$  in the classical sense. Unfortunately, $r_\theta$ might tend to 0 when $\theta$ tends to a singular direction. Therefore, $\PP$-$k$--summability of a series $\hat{f}$ does not imply that $\hat{T}_{\PP,\ell}\hat{f} \mid_{D^d_r}$ is $k$--summable in $\mathcal{E}^\PP_{\ell,r}$ for some fixed $r>0$. For a counterexample, see \cite{Monomial summ}, Section 6. Nevertheless, we have

\begin{teor}\label{no sing directions then convergence for monomials}If $\hat{f}\in E\{\boldsymbol{x}\}^{\PP}_{1/k}$ has no singular directions, then $\hat{f}$ is convergent.
 \end{teor}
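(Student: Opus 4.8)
The plan is to argue by induction on the number $h(\PP)$ of monomialization steps furnished by Lemma \ref{Normalization}, reducing the general germ $\PP$ to a monomial, for which the statement is the already available tauberian theorem for $\xx^\aa$-$k$--summability (the monomial case, see \cite{C}), itself resting on the one–variable result Theorem \ref{Tauberian classical}(1). The reason for going through the geometry of the monomialization rather than through a fixed injective linear form $\ell$ is exactly the phenomenon pointed out before the statement: $\PP$-$k$--summability of $\hat f$ does not force $\hat T_{\PP,\ell}\hat f\mid_{D^d_r}$ to be $k$--summable in a \emph{fixed} Banach space $\mathcal E^\PP_{\ell,r}$, since the radius $r_\theta$ may collapse as $\theta$ approaches a singular direction. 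The crucial observation that makes the inductive scheme work is that the hypothesis of having \emph{no} singular direction (i.e.\ summability in every direction) is robust under all the operations involved.

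First I would record two invariances. (i) Composition with a diffeomorphism $D\in\text{Diff}(\C^d,\00)$: since $(\PP\circ D)(\yy)=\PP(D(\yy))$, a $\PP$-sector $\Pi_\PP(a,b;\RR)$ pulls back to a set containing a $(\PP\circ D)$-sector $\Pi_{\PP\circ D}(a,b;\RR')$ with the \emph{same} arguments $a,b$, and sending a $\PP$-$\tfrac1k$--asymptotic sequence $\{f_N\}$ to $\{f_N\circ D\}$ shows $\hat f\sim^\PP_{1/k}\hat f$ transforms into $\hat f\circ D\sim^{\PP\circ D}_{1/k}\hat f\circ D$; hence $\hat f\in E\{\xx\}^\PP_{1/k}$ with no singular direction iff $\hat f\circ D\in E\{\xx\}^{\PP\circ D}_{1/k}$ has none, and $\hat f$ converges iff $\hat f\circ D$ converges. (ii) If two germs are associated, Remark \ref{Remark 2}(4) transports summability in a direction $\theta$ into summability in a nearby direction, the opening being only enlarged; as $\theta$ ranges over all of $\R$ this keeps all directions covered, so the property of having no singular direction is shared (cf.\ Remark \ref{Remark 4}(1)).

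For the base case $h(\PP)=0$ the germ has normal crossings, $\PP\circ D=\xx^\bb\cdot U$ with $U\in\OO^\ast$; by (i) and (ii) $\hat f\circ D$ is $\xx^\bb$-$k$--summable with no singular direction, so the monomial tauberian theorem gives convergence of $\hat f\circ D$, hence of $\hat f$. For the inductive step with $h(\PP)>0$, Lemma \ref{Normalization}(2) supplies $D$; set $\QQ=\PP\circ D$ and $\hat g=\hat f\circ D$, which by (i) is $\QQ$-$k$--summable with no singular direction, $\hat f$ converging iff $\hat g$ does. We are in one of two cases. If $h(\QQ\circ b_\xi)<h(\PP)$ for every $\xi\in\PC$, then by Remark \ref{Remark 4}(3) we have $\hat g\circ b_\xi\in E\{\xx\}^{\QQ\circ b_\xi}_{1/k,\theta}$ for every direction $\theta$, so $\hat g\circ b_\xi$ has no singular direction; the induction hypothesis gives that $\hat g\circ b_\xi$ converges for every $\xi$, and Lemma \ref{Convergence is preserved by blow-ups}, $(3)\Rightarrow(1)$, yields convergence of $\hat g$. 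The remaining case $h(\QQ\circ r_m)<h(\PP)$ for some $m\geq2$ is identical, using $\hat g\circ r_m$ and $(2)\Rightarrow(1)$ of the same lemma. The induction on $h(\PP)\in\N$ then closes the argument.

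The hard part is not any single estimate but the bookkeeping that guarantees the \emph{strong} hypothesis ``no singular direction'' survives each diffeomorphism, passage to an associated germ, blow-up $b_\xi$ and ramification $r_m$ — this is precisely what lets us bypass the collapsing-radius obstruction, since at each stage we only ever need summability in \emph{all} directions, never uniform control of a single radius. The sole genuinely analytic input is the monomial base case, which I would take as given in accordance with the abstract; one should also double–check, in invariance (ii), that the small argument shifts coming from Remark \ref{Remark 2}(4) cannot manufacture a singular direction, which follows because the perturbation $\theta\mapsto\theta+(\theta_1+\theta_2)/2$ of the direction is a small deformation of the identity covering the whole circle while the openings stay above $\pi/k$.
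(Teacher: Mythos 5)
Your reduction scheme---induction on $h(P)$, transporting the no-singular-direction hypothesis through diffeomorphisms, associated germs, ramifications and blow-ups (Remarks \ref{Remark 2}(4)--(5) and \ref{Remark 4}(1),(3)) and pulling convergence back via Lemma \ref{Convergence is preserved by blow-ups}---is structurally sound, and it is essentially the machinery the paper itself deploys for Theorem \ref{Main Result}. But your proof has a genuine gap at its base case, and it is not a removable one. You take the monomial tauberian theorem ``as given \dots see \cite{C}'', whereas the paper explicitly warns, in the remark following Lemma \ref{tauberian general case}, that the proofs of precisely this statement in \cite{C,CM} rest on the false claim discussed immediately above Theorem \ref{no sing directions then convergence for monomials}: namely that $P$-$k$--summability of $\hat{f}$ would force $\hat{T}_{\PP,\ell}\hat{f}\mid_{D^d_r}$ to be $k$--summable in a \emph{fixed} Banach space $\mathcal{E}^{\PP}_{\ell,r}$ (a counterexample is in \cite{Monomial summ}, Section 6). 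The correct proof of the monomial case is exactly what this theorem is supplying (``this is repaired here''), so your appeal to the literature is circular as it stands: the ``sole genuinely analytic input'' you defer \emph{is} the theorem. Your closing claim that the hard part is the bookkeeping, not any single estimate, inverts the actual situation.

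The paper's proof shows moreover that monomialization is unnecessary here and that the collapsing radius $r_\theta$ is defeated by compactness rather than by geometry: fix any injective $\ell$ and write $\hat{T}_{\PP,\ell}\hat{f}=\sum_{n\geq0} f_{\PP,\ell,n}t^n$. For each $\theta\in[0,2\pi]$, summability in direction $\theta$ gives, via Proposition \ref{laplace}, that the $k$--Borel transform $g_\theta(\xx,\xi)=\sum_{n\geq0} f_{\PP,\ell,n}(\xx)\xi^n/\Gamma(1+n/k)$ continues to $D^d_{\rho_\theta}\times S(\theta,2\delta_\theta)$ with $\|g_\theta(\xx,\xi)\|\leq K_\theta\exp\left(A_\theta|\xi|^k\right)$. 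The intervals $(\theta-\delta_\theta,\theta+\delta_\theta)$ cover the compact interval $[0,2\pi]$, so finitely many suffice; taking $\rho=\min_j\rho_{\theta_j}>0$, $K=\max_j K_{\theta_j}$, $A=\max_j A_{\theta_j}$, and gluing the $g_{\theta_j}$ (all analytic continuations of one germ at $\xi=0$) yields a function on $D^d_\rho\times\C$, entire of exponential order at most $k$ in $\xi$, uniformly for $|\xx|<\rho$. Cauchy's inequalities on the circle $|\xi|=(n/Ak)^{1/k}$ then give $\|f_{\PP,\ell,n}(\xx)\|\leq K(Ae)^{n/k}\,\Gamma(1+n/k)\,(n/k)^{-n/k}$, which is geometric in $n$ by Stirling's formula, whence $\hat{T}_{\PP,\ell}\hat{f}$ and therefore $\hat{f}$ converge. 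If you want to keep your inductive architecture you must at minimum prove this compactness--glueing argument for the base case $\PP=\xx^\aa$---at which point you will notice it nowhere uses that $\PP$ is a monomial, and the induction evaporates.
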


\begin{proof} We follow a classical proof of Theorem \ref{Tauberian classical}(1). First, choose an injective linear form $\ell:\N^d\to\R^+$. Let us write $\hat T_{P,\ell}\hat{f}(t)=\sum_{n=0}^\infty f_{P,\ell,n} t^n$ where $f_{P,\ell,n}\in \mathcal{E}_{\ell,R}^P$ with some $R>0$. If $\hat{f}$ is $P$-$k$--summable in all directions, 
then, by Remark \ref{Remark 3}(2), for all directions $\theta\in[0,2\pi]$,
there exists some $0<r_\theta\leq R$ such that  $\hat T_{P,\ell}\hat{f}(t)\mid_{D^d_{r_\theta}}$ is $k$--summable in direction $\theta$. By Proposition \ref{laplace}, this means that for every $\theta\in[0,2\pi]$, there exist
$\rho_\theta,\delta_\theta>0$ such that the $k$--Borel transform $g_\theta(\xx,\xi)=\sum_{n=0}^\infty \frac{f_{P,\ell,n}(\xx)}{\Gamma(1+n/k)}\xi^n$ is convergent and defines by analytic continuation a holomorphic function $g_\theta:D_{\rho_\theta}^{d}\times S(\theta,2\delta_\theta)\to \C$. Furthermore $g_\theta$ has exponential growth of order $k$, i.e., there exist 
$A_\theta,K_\theta>0$ such that 
$$\|g_{\theta}(\xx,\xi)\|\leq K_\theta\exp\left(A_\theta|\xi|^k\right),\quad \mbox{ for all }\xx, \xi \mbox{ in the domain.}$$
Now the open sets $(\theta-\delta_\theta,\theta+\delta_\theta)$, $\theta\in[0,2\pi]$, 
form an open covering of
the compact interval $[0,2\pi]$. Therefore there is a finite sub-covering, i.e.,\ 
a positive integer $N$ and $0\leq \theta_1<\dots <\theta_N\leq 2\pi$ such that
$[0,2\pi]\subset\cup_{j=1}^N [\theta_j-\delta_{\theta_j},\theta_j+\delta_{\theta_j}]$.
This means that the sectors $S(\theta_j,2\delta_{\theta_j})$ cover the punctured complex plane.
As every $g_{\theta_j}$ is an analytic continuation of the same germ at $\xi=0$, they can
be combined to 
a holomorphic function $g:D_{\rho}^d\times \C\to\C$, $\rho=\min_{1\leq j\leq N} \rho_{\delta_j}$, 
of exponential growth $\|g(\xx,\xi)\|\leq K \exp({A|\xi|^k})$, for all $\xi\in\C$, $|\xx|<\rho$, with the 
constants $K=\max_{1\leq j\leq N} K_{\theta_j}$ and $A=\max_{1\leq j\leq N} A_{\theta_j}$. 

It is well known that this implies the convergence of $\hat T_{P,\ell}\hat f$ and hence the 
convergence of $\hat f$. Indeed, Cauchy's inequalities on a disk of radius $(\frac n{Ak})^{1/k}$ show that
$$\|f_{P,\ell,n}(\xx)\|\leq K(Ae)^{n/k}\frac{\Gamma(1+n/k)}{(n/k)^{n/k}},\quad \mbox{ for all } |\xx|<\rho,\ n\in\N.$$ 
An application of Stirling's formula allows  to conclude that $\hat{f}$ is convergent.
\end{proof}

\begin{lema}\label{tauberian general case}Let $\aa,\aa'\in\N^d\setminus\{\00\}$ and $k, k'>0$. The following statements hold:
	
	\begin{enumerate}
		\item {If $\hat{f}\in E\{\boldsymbol{x}\}_{1/k}^{\boldsymbol{\a}}$ and $\hat{T}_\aa \hat{f}$ is an $s$--Gevrey series with some $s<1/k$, then $\hat{f}$ is convergent. In particular, if the entries of $\aa$ and $\aa'$ are not zero and $\max_{1\leq j\leq d}\{\a_j/\a_j'\}< k'/k$, then $E\{\boldsymbol{x}\}_{1/k}^{\boldsymbol{\a}}\cap  E[[\boldsymbol{x}]]_{1/k'}^{\boldsymbol{\a}'}=E\{\boldsymbol{x}\}$.}
		
		\item $E\{\boldsymbol{x}\}_{1/k}^{\boldsymbol{\a}}\cap E\{\boldsymbol{x}\}_{1/k'}^{\boldsymbol{\a}'}=E\{\boldsymbol{x}\}$, except in the case $k\aa=k'\aa'$ where $E\{\boldsymbol{x}\}_{1/k}^{\boldsymbol{\a}}=E\{\boldsymbol{x}\}_{1/k'}^{\boldsymbol{\a}'}$.
	\end{enumerate}
\end{lema}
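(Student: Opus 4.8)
The plan is to establish (1) by a Borel--Laplace argument extending the proof of Theorem \ref{no sing directions then convergence for monomials}, and then to deduce (2) from (1) by monomializing the pair of weighted monomials.

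For (1), I would fix an injective linear form $\ell$ and write $\hat T_\aa\hat f=\sum_{n\ge0}f_{\aa,n}t^n$ with $f_{\aa,n}\in\mathcal E^\aa_r$ for a \emph{fixed} $r>0$; the hypothesis that $\hat T_\aa\hat f$ is $s$--Gevrey with $s<1/k$ reads $\|f_{\aa,n}\|\le CA^n n!^{s}$. The first observation is that the $k$--Borel transform $g(\xx,\xi)=\sum_{n\ge0}f_{\aa,n}(\xx)\,\xi^n/\Gamma(1+n/k)$ is then an \emph{entire} function of $\xi$ with values in $\mathcal E^\aa_r$, of finite order $1/(1/k-s)$; this uses the limit (\ref{Limit}) to compare $\Gamma(1+n/k)$ with $n!^{1/k}$. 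On the other hand, since $\hat f\in E\{\xx\}^\aa_{1/k}$, Remark \ref{Remark 3}(2) and Proposition \ref{laplace} provide, for each nonsingular direction $\theta$, a sector $S(\theta,2\delta_\theta)$ and a radius $\rho_\theta>0$ on which $g$ grows at most like $\exp(A_\theta|\xi|^{k})$.

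The main obstacle is that the radii $\rho_\theta$ may tend to $0$ as $\theta$ approaches one of the finitely many singular directions, so---unlike in the scalar Theorem \ref{Tauberian classical}(2)---one cannot work in a single Banach space $\mathcal E^\aa_\rho$. I would circumvent this using the finiteness of the order of $g$. The good sectors cover the circle except for arbitrarily thin sectors $\Sigma_1,\dots,\Sigma_m$ around the singular directions; a finite subcover of the complement together with the finitely many boundary rays of the $\Sigma_j$ yields a single $\rho_2>0$ such that $g$ has order--$k$ growth on $\C\setminus\bigcup_j\Sigma_j$ and on each $\partial\Sigma_j$, uniformly for $|\xx|<\rho_2$. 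Inside each $\Sigma_j$ the function $g(\xx,\cdot)$ is entire of finite order, so a Phragmén--Lindelöf argument (multiplying by $\exp(-A'\xi^{k})$ and taking the openings small) propagates the order--$k$ bound into $\Sigma_j$ uniformly in $\xx$. Thus $g$ has order--$k$ growth on all of $\C$ for $|\xx|<\rho_2$, and Cauchy's inequalities on discs of radius $(n/Ak)^{1/k}$ together with Stirling's formula give $\|f_{\aa,n}\|\le KL^n$; hence $\hat T_\aa\hat f$, and so $\hat f$, converge. The ``in particular'' clause follows at once: for $\hat f\in E[[\xx]]^{\aa'}_{1/k'}$, Lemma \ref{Bounds for formal gevrey series}(2) makes $\hat T_\aa\hat f$ an $s$--Gevrey series with $s=\max_j\{\a_j/\a_j'\}/k'<1/k$, while the inclusion $E\{\xx\}\subseteq E\{\xx\}^\aa_{1/k}\cap E[[\xx]]^{\aa'}_{1/k'}$ is clear.

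For (2) the inclusion $\supseteq$ is again clear. When $k\aa=k'\aa'$ the classes of $(\aa,1/k)$ and $(\aa',1/k')$ in $\Lambda_d$ coincide; writing $k/k'=p/q$ with $p,q\in\N^+$ and applying Remark \ref{Remark 4}(2) twice gives $E\{\xx\}^\aa_{1/k}=E\{\xx\}^{p\aa}_{p/k}=E\{\xx\}^{q\aa'}_{q/k'}=E\{\xx\}^{\aa'}_{1/k'}$, the stated exceptional equality. Assume therefore $k\aa\neq k'\aa'$ and take $\hat f$ in the intersection. By Lemma \ref{Ordered monomials} there is a monomial blow--up $\pi$ after which $\{\pi^\ast(\aa,1/k),\pi^\ast(\aa',1/k')\}$ is totally ordered for $\prec$ with all entries nonzero; since $\pi^\ast$ keeps the two classes distinct, the order is strict, say $(\tilde\aa,1/k)\prec(\tilde\aa',1/k')$, i.e.\ $\max_j\{\tilde\a_j/\tilde\a_j'\}<k'/k$ by Remark \ref{Note on <}. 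Each monomial transformation composing $\pi$ is a chart of a blow--up with $\xx^\aa\circ\pi_{ij}=\xx^{\aa+\a_je_i}$, so by the monomial analog of Remark \ref{Remark 4}(3) summability is preserved and $\hat f\circ\pi\in E\{\xx\}^{\tilde\aa}_{1/k}\cap E\{\xx\}^{\tilde\aa'}_{1/k'}$. The ``in particular'' part of (1) then shows $\hat f\circ\pi$ is convergent, and Lemma \ref{Convergence is preserved by blow-ups} ((3)$\Rightarrow$(1)) applied along the chain of charts descends this convergence to $\hat f$, whence $\hat f\in E\{\xx\}$.
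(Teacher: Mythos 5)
Your proof is correct and takes essentially the same route as the paper's: part (1) via the $k$--Borel transform, its entirety of finite order $\mu$ with $1/\mu=1/k-s$, and a Phragm\'en--Lindel\"of argument (with the opening kept below $\pi/\mu$) to upgrade the order-$k$ bounds across the problematic directions despite the radii $\rho_\theta$ possibly shrinking; part (2) via Lemma \ref{Ordered monomials}, Remark \ref{Note on <}, Remark \ref{Remark 4}, and Lemma \ref{Convergence is preserved by blow-ups}, exactly as in the paper. The only cosmetic difference is organizational: the paper argues by contradiction that no singular direction can exist and then invokes Theorem \ref{no sing directions then convergence for monomials}, whereas you assemble the global order-$k$ bound directly and inline that theorem's covering and Cauchy--Stirling conclusion.
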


\begin{proof} (1) The second statement is indeed a consequence of the first: If $\hat{f}\in E[[\xx]]_{1/k'}^{\aa'}$, then by Lemma \ref{Bounds for formal gevrey series}(2), $\hat{T}_{\boldsymbol{\a}}\hat{f}$ is a $\max_{1\leq j\leq d}\{\a_j/\a_j'\}/k'$--Gevrey series in some $\mathcal{E}_r^{\boldsymbol{\a}}$ and the first statement applies. 

For the proof of the first statement, we follow the proof of Theorem 3.8.2 in \cite{RS89}.
Let us write $\hat T_\a\hat f(t)=\sum_{n=0}^\infty f_{n}t^n$, with $f_n\in \mathcal{E}_r^{\boldsymbol{\a}}$ and 
use the
$k$--Borel transform $g$ of $\hat T_\a\hat f$ in the form $g(\xx,\xi)=\sum_{n=0}^\infty \frac{f_{n}(\xx)}{\Gamma(1+n/k)}\xi^n$.
Since $\hat T_\a\hat f$ is $s$--Gevrey with some $s<1/k$, as seen above Lemma \ref{Characterization of P-s-Gevrey}, we find constants $K,A>0$ such that $$\frac{\|f_{n}(\xx)\|}{\Gamma(1+n/k)}\leq K A^n n!^{-1/\mu},\quad \text{for all } |\xx|<r, n\in\N,\quad  1/\mu:=1/k-s.$$ As is well known, this implies that $g$ is not only convergent, but defines a holomorphic function on $D_r^d\times\C$ having
exponential growth of order at most $\mu$ with respect to $\xi$, i.e., there are $L,B>0$ such that
\begin{equation*}\label{expomu}\|g(\xx,\xi)\|\leq L \exp(B|\xi|^\mu),\quad \mbox{ for all }|\xx|<r, \xi\in\C.\end{equation*}
Now we claim that $\hat f$ is $\xx^\a$-$k$--summable in all directions and hence convergent by Theorem 5.2
which proves statement (1).
We give a proof by contradiction. Assume that $\theta$ is a singular direction of $\hat f$.
We choose a positive $\delta<\frac\pi{2\mu}$ such that, again by Remark \ref{Remark 3}(2), a certain restriction of $\hat T_\a\hat f$ is $k$--summable in the directions $\theta_-=\theta-\delta$
and $\theta_+=\theta+\delta$. 
By Proposition \ref{laplace}, there exist $0<\rho<r$ and $M,C>0$ such that the $k$--Borel transform $g$ of $\hat T_\a\hat f$ satisfies
\begin{equation*}\label{expok}\|g(\xx,\xi)\|\leq M\exp(C|\xi|^k),\quad \mbox{ for all }|\xx|<\rho, \arg(\xi)\in\{\theta_-,\theta_+\}.\end{equation*}
We want to use the Phragm\'{e}n-Lindel\"{o}f principle on the sector $\arg(t)\in [\theta_-,\theta_+]$. We apply it to the function
$h(\xx,\xi)=g(\xx,\xi)\exp\left(-D(\xi e^{-{i\theta}})^k\right)$ where $D$ is chosen such that 
$\left|\exp\left(D(\tau e^{{i\delta}})^k\right)\right|=\exp(C\tau ^k)$ for positive $\tau $. This means that $D\cos(k\delta)=C$.
Therefore $h$ is bounded on the rays $\arg(\xi)\in\{\theta_-,\theta_+\}$, satisfies 
$\|h(\xx,\xi)\|\leq L \exp(B|\xi|^\mu),\mbox{ for all }|\xx|<\rho$, $\arg(\xi)\in[\theta_-,\theta_+]$. Finally, the opening of the sector is
smaller than $\pi/\mu$. Hence the Phragm\'{e}n-Lindel\"{o}f principle yields that $h$ is bounded on the full sector. Thus we can find constants $\tilde M,\tilde C>0$ such that $$\|g(\xx,\xi)\|\leq \tilde M\exp(\tilde C|\xi|^k),\quad \mbox{ for all }|\xx|<\rho, \arg(\xi)\in[\theta_-,\theta_+].$$ In particular,
$\hat T_\a\hat f\mid_{D_\rho^d}$ is also $k$--summable in direction $\theta$ by Proposition \ref{laplace}.  Therefore, by Remark \ref{Remark 3}(2), $\hat f$ is $\xx^\a$-$k$--summable in direction $\theta$ contradicting the assumption.

(2) If $k\aa=k'\aa'$, then $k/k'=p/q$ for some $p,q\in\N^+$, $(p,q)=1$ and thus $p\aa=q\aa'$. Then using Remark \ref{Remark 4}(2) we obtain $$E\{\xx\}_{1/k}^{\aa}=E\{\xx\}_{p/k}^{p\aa}=E\{\xx\}_{q/k'}^{q\aa'}=E\{\xx\}_{1/k'}^{\aa'}.$$ If $k\aa\neq k'\aa'$ we can use Lemma \ref{Ordered monomials} to find a monomial blow-up $\pi:\C^d\rightarrow\C^d$ such that $(\aa_1,1/k)=\pi^\ast(\aa,1/k)$ and $(\aa_2,1/k')=\pi^\ast(\aa',1/k')$ are comparable and the new monomials have no nonzero entries, i.e., we are in the situation of item (1) due to Remark \ref{Note on <}. If  $\hat{f}\in E\{\xx\}_{1/k}^{\aa}\cap  E\{\xx\}_{1/k'}^{\aa'}$, then by Remark \ref{Remark 4}(3) $\hat{f}\circ\pi \in E\{\xx\}_{1/k}^{\aa_1}\cap  E\{\xx\}_{1/k'}^{\aa_2}=E\{\xx\}$ and by Lemma \ref{Convergence is preserved by blow-ups} also $\hat{f}\in E\{\xx\}$. 
\end{proof}

\begin{nota} Theorem \ref{no sing directions then convergence for monomials} and Lemma \ref{tauberian general case} were obtained in \cite{C,CM}. Although the statements are correct, the proofs given there were based on the false statement discussed above Theorem 5.2. This is repaired here.
\end{nota}

Recall from Section \ref{Remarks on Monomialization} that for $P_0,P_1\in\mathcal{O}\setminus\{0\}$, $P_0(\00)=P_1(\00)=0$ and $k_0, k_1>0$ the couples $(P_0,1/k_0)\sim (P_1,1/k_1)$ if we can find $p_0, p_1\in\N^+$ and $U\in\mathcal{O}^\ast$ such that $p_0/k_0=p_1/k_1$, and $\PP_1^{p_1}= U\cdot \PP_0^{p_0}.$ It follows from Remark \ref{Remark 4}(1) and (2) that if $$(P_0,1/k_0)\sim (P_1,1/k_1),\quad \text{ then }\quad E\{\xx\}^{\PP_0}_{1/k_0}=E\{\xx\}^{\PP_1}_{1/k_1}.$$ The converse is also true and in fact, we can generalize Theorem \ref{Tauberian classical} (2) and (3) for $P$-$k$--summability as follows.

\begin{teor}\label{Main Result} Let $\PP_j\in\mathcal{O}\setminus\{0\}$, $\PP_j(\00)=0$, $k_j>0$ for $j=1,\dots,n$. For each $j=1,\dots,n$ consider a series $\hat{f}_j\in E\{\xx\}_{1/k_j}^{\PP_j}$. If the couples $(P_j,1/k_j)$, $j=1,\dots,n$ are pairwise not equivalent and  $\hat{f_1}+\cdots+\hat{f}_n=0$, then $\hat{f}_j\in E\{\xx\}$, for  all $j=1\dots,n$.
	
In particular, $E\{\xx\}^{\PP_0}_{1/k_0}=E\{\xx\}^{\PP_1}_{1/k_1}$ if and only if $(P_0,1/k_0)\sim (P_1,1/k_1)$.\end{teor}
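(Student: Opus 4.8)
The plan is to establish the displayed sum-to-zero statement first and then read off the ``in particular'' equivalence. One implication, $(P_0,1/k_0)\sim(P_1,1/k_1)\Rightarrow E\{\xx\}^{P_0}_{1/k_0}=E\{\xx\}^{P_1}_{1/k_1}$, is already recorded above the theorem through Remark \ref{Remark 4}(1),(2). For the converse I would argue by contraposition: assuming $(P_0,1/k_0)\not\sim(P_1,1/k_1)$, the case $n=2$ of the sum-to-zero statement, applied to $\hat f$ and $-\hat f$, yields $E\{\xx\}^{P_0}_{1/k_0}\cap E\{\xx\}^{P_1}_{1/k_1}=E\{\xx\}$. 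Since (as noted before Definition \ref{Def Pasym Gevrey}) a series is convergent iff its image under $\hat T_{P_0,\ell}$ is, one may lift any divergent classical $k_0$--summable series in $t$ to a divergent element of $E\{\xx\}^{P_0}_{1/k_0}$; hence $E\{\xx\}^{P_0}_{1/k_0}\supsetneq E\{\xx\}$ and the two spaces cannot coincide. Thus equality of the spaces forces $\sim$.

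The heart is the sum-to-zero statement, which I would prove by induction on the pair $(N,n)$ ordered lexicographically, where $N=h(P_1\cdots P_n)$; recall that $h(\prod_{j\in B}P_j)\le N$ for every $B\subseteq\{1,\dots,n\}$ by the monotonicity of $h$ recorded after Lemma \ref{Normalization}. The case $n=1$ is trivial. In the base case $N=0$ the germ $P_1\cdots P_n$ has normal crossings, so there is $D\in\text{Diff}(\C^d,\00)$ with $(P_1\cdots P_n)\circ D=\xx^{\bb}U$; since $\mathcal O$ is a UFD with the $x_i$ prime, each factor satisfies $P_j\circ D=\xx^{\gg_j}U_j$, $U_j\in\mathcal O^\ast$. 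Replacing $\hat f_j$ by $\hat f_j\circ D$ (summability and the relation $\sim$ are $D$--invariant, the latter because composition is a ring homomorphism) reduces us to the purely monomial case: pairwise inequivalent $(\gg_j,1/k_j)$, $\hat g_j\in E\{\xx\}^{\xx^{\gg_j}}_{1/k_j}$ by Remark \ref{Remark 4}(1), and $\sum_j\hat g_j=0$. I would then apply Lemma \ref{Ordered monomials} (a monomial blow-up, under which summability, the order $\prec$ and inequivalence are all preserved, the last by the very definition of $\pi^\ast$ on $\Lambda_d$) to assume $(\gg_1,1/k_1)\prec\cdots\prec(\gg_n,1/k_n)$ with nonzero entries, and peel off the smallest: from $\hat g_1=-\sum_{l\ge2}\hat g_l$, Lemma \ref{Bounds for formal gevrey series}(2) together with Remark \ref{Note on <} shows $\hat T_{\gg_1}\hat g_1$ is $s$--Gevrey for some $s<1/k_1$, whence $\hat g_1\in E\{\xx\}$ by Lemma \ref{tauberian general case}(1); absorbing $\hat g_1$ into $\hat g_2$ and iterating gives all $\hat g_j\in E\{\xx\}$, and Lemma \ref{Convergence is preserved by blow-ups} descends convergence back.

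For the inductive step ($N>0$, $n\ge2$) I would apply Lemma \ref{Normalization} to $f=P_1\cdots P_n$, obtain $D$, and replace each $P_j,\hat f_j$ by $P_j\circ D,\hat f_j\circ D$ (preserving all hypotheses and the value of $N$, by invariance of $h$, $\sim$ and summability under $D$), so that either $h((P_1\cdots P_n)\circ r_m)<N$ for some $m$, or $h((P_1\cdots P_n)\circ b_\xi)<N$ for all $\xi$. In the ramification alternative, composing everything with $r_m$ preserves inequivalence \emph{exactly} by Lemma \ref{Equivalence relation and blow ups}(1), keeps the sum zero, and lets the induction hypothesis at the smaller $N$ apply, after which Lemma \ref{Convergence is preserved by blow-ups} descends convergence. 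The delicate alternative is the blow-up one. Here I would pick a single $\xi_0$ for which ``$\sim$ after $b_{\xi_0}$'' partitions $\{1,\dots,n\}$ into at least two classes; such $\xi_0$ exists, for otherwise all couples would be mutually equivalent by Lemma \ref{Equivalence relation and blow ups}(2), contradicting pairwise inequivalence. Inside one class the germs $P_j\circ b_{\xi_0}$ are associated, so the corresponding $\hat f_j\circ b_{\xi_0}$ lie in a common summability space and so does their class-sum; the finitely many class-sums have pairwise inequivalent couples, sum to zero, and the product of class representatives divides $(P_1\cdots P_n)\circ b_{\xi_0}$, so $N$ has strictly dropped and the induction hypothesis gives each class-sum convergent, hence each $\sum_{j\in B}\hat f_j$ convergent by Lemma \ref{Convergence is preserved by blow-ups}. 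Finally, for each class $B$ I replace one representative by $-\sum_{j\in B}\hat f_j$ to make the class-sum vanish and invoke the induction hypothesis once more, now at the same $N$ but with $|B|<n$ germs, concluding that every $\hat f_j$ is convergent.

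The main obstacle is precisely this blow-up alternative: globally inequivalent couples can become equivalent after a particular $b_\xi$, since Lemma \ref{Equivalence relation and blow ups}(2) separates them only for \emph{some} $\xi$, so one cannot apply the induction chart by chart. The lexicographic order on $(N,n)$ is what unlocks it, letting the grouping step trade a decrease in $N$ (to get convergence of the class-sums) against a decrease in $n$ (to separate the germs inside each class). The routine but essential bookkeeping — checking that $h$ of the relevant partial products strictly decreases, and that summability and $\sim$ survive $D$, $b_\xi$, $r_m$ and the monomial blow-up of Lemma \ref{Ordered monomials} — is where I expect the care to lie.
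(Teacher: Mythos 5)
Your proposal is correct and takes essentially the same route as the paper's own proof: the same double induction (the paper orders $(n,N)$ lexicographically with $n$ primary, you order $(N,n)$ with $N$ primary, but both orders validate exactly the inductive instances used — smaller $N$ for the class-sums, smaller number of series within a class), the same base case via normal crossings, Lemma \ref{Ordered monomials}, Lemma \ref{Bounds for formal gevrey series}(2) and Lemma \ref{tauberian general case}(1) with descent by Lemma \ref{Convergence is preserved by blow-ups}, and the same key move of grouping the couples into equivalence classes after one well-chosen chart $b_{\xi_0}$ obtained from Lemma \ref{Equivalence relation and blow ups}. Your explicit argument for the ``in particular'' converse (lifting a divergent classical $k_0$--summable series through $\hat T_{P_0,\ell}$ to exhibit a divergent element of $E\{\xx\}^{P_0}_{1/k_0}$) merely spells out a step the paper leaves implicit.
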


\begin{proof} We proceed by induction on $n$. If $n=1$, there is nothing to prove. Assume now that the statement is true for some $n-1\geq1$. To show it holds for $n$ we proceed by induction on $N=h(\prod_{j=1}^n\PP_j)$, where $h$ is the function in Lemma \ref{Normalization}. For $N=0$ we can assume $P_j(\xx)=\xx^{\aa_j}$ for all $j$. Here the hypothesis on the couples is equivalent to the fact that $(\aa_1,1/k_1),\dots,(\aa_n,1/k_n)\in \Lambda_d$ are all distinct, more precisely, the products $k_j\aa_j$ are pairwise different. By Lemma \ref{Ordered monomials} there is a monomial blow-up $\pi:\C^d\rightarrow\C^d$ such that the elements $(\aa_j',1/k_j)=\pi^\ast(\aa_j,1/k_j)$, $j=1,\dots,n$ are totally ordered with respect to  $\prec$. Re-indexing if necessary we assume that $(\aa_1',1/k_1)\prec(\aa_2',1/k_2)\prec\cdots\prec (\aa_n',1/k_n).$ Using $\hat f_1=-\hat f_2-\cdots-\hat f_n$ and Lemma \ref{Bounds for formal gevrey series}(2) we conclude that $\hat{T}_{\aa_1'}(\hat{f_1}\circ\pi)$ is Gevrey of some value less than $1/k_1$. As $\hat{f}_1 \circ \pi$ is $\xx^{\aa_1'}$-$k_1$--summable, Lemma 5.3 (1) applies and yields the convergence of $\hat{f}_1 \circ\pi$. By Lemma \ref{Convergence is preserved by blow-ups}, $\hat{f}_1$ is convergent. We can apply the induction hypothesis to the $n-1$ series $\hat{f}_1+\hat{f_2}, \hat{f_3},\dots, \hat{f_n}$ to obtain the statement for the present $n$ and $N=0$.
	
Now suppose the statement is true whenever we have fewer than $n$ series or if $h(\prod_{j=1}^nP_j)<N$ for some $N>0$. By Lemma \ref{Normalization} there exists a diffeomorphism $D\in \text{Diff}(\C^d,\00)$ such that $h(\prod_{j=1}^n\PP_j\circ D\circ r_m)<N$ for some $m\geq 2$ or $h(\prod_{j=1}^n\PP_j\circ D\circ b_\xi)<N$, for all $\xi\in\mathbb{P}^1_\C$. Let us write $\QQ_j=\PP_j\circ D$ and $\hat{g}_j=\hat{f}_j\circ D$, $j=1,\dots,n$. We consider the two possibilities: \begin{enumerate}
	
\item $h(\prod_{j=1}^n\QQ_j\circ r_m)<N$ for some $m\geq 2$. By Remark \ref{Remark 4}(3) we have  $\hat{g}_j\circ r_m\in E\{\xx\}_{1/k_j}^{\QQ_j\circ r_m}$, and by Lemma \ref{Equivalence relation and blow ups} the {couples} $(Q_j\circ r_m,1/k_j)$ are pairwise not equivalent. By the induction hypothesis,  $\hat{g}_j\circ r_m\in E\{\xx\}$, and so, by Lemma \ref{Convergence is preserved by blow-ups}(2), $\hat{g}_j\in E \{ \xx\}$ and hence  $\hat{f}_j\in E \{ \xx\}$ for all $j=1,\dots,n$.

\item Assume now that $h(\prod_{j=1}^n\QQ_j\circ b_\xi)<N$, for all $\xi\in\mathbb{P}^1_\C$. By Lemma \ref{Equivalence relation and blow ups} there exists $\xi_0 \in \mathbb{P}^1_\C$ such that $\left( Q_1\circ b_{\xi_0},1/k_1 \right)$, $\left( Q_2\circ b_{\xi_0}, 1/k_2 \right)$ are not equivalent.
We group the germs $\hat{g}_j \circ b_{\xi_0}$ with respect to equivalence of the couples $\left( Q_j\circ b_{\xi_{0}},1/k_j \right)$, obtaining a partition $I_1\cup \cdots\cup I_{n'}$ of $\{ 1,2,\ldots ,n\}$ with $n'>1$ because at least two of the couples are not equivalent. Observe that for each $i$ the spaces $E\{\xx\}_{1/k_j}^{\QQ_j\circ b_{\xi_0}}$, $j\in I_i$ are identical. For each $i$, we fix some $j(i)\in I_i$. Now the $n'$ couples $(\QQ_{j(i)}\circ b_{\xi_0},1/k_{j(i)})$, $i=1,\dots,n'$ are pairwise not equivalent by construction. Denoting by $\hat{h}_{i}=\sum_{j\in I_i} \hat{g}_{j}\circ b_{\xi_{0}}$, we have $\hat{h}_{1}+\cdots + \hat{h}_{n'}=0$ and $\hat h_i\in E\{\xx\}_{1/k_{j(i)}}^{\QQ_{j(i)}\circ b_{\xi_0}}$, $i=1,\dots,n'$.
Since  $h(\prod_{i=1}^{n'}\QQ_{j(i)}\circ b_{\xi_0})\leq h(\prod_{j=1}^n\QQ_j\circ b_{\xi_0})<N$,
the induction hypothesis on $N$ yields that $\hat{h}_{i}\in E \{ \xx \}$ for all $i$. By Lemma  \ref{Convergence is preserved by blow-ups}(2), we conclude that $G_i:=\sum_{j\in I_i} \hat{g}_j\in E \{ \xx\}$ for each $i$. If some $I_i$ contains more than one element, we fix such an $i$ temporarily and change some $\hat{g}_j$ to $\hat{g}_j-G_i$. Then we can apply the induction hypothesis on $n$ because $|I_i|<n$ and obtain that all $\hat g_j$, $j\in I_i$ converge. As $i$ is arbitrary here, we conclude that $\hat g_j\in E\{\xx\}$ and hence $\hat f_j\in E\{\xx\}$ for all $j=1,\dots,n$.\end{enumerate}
Finally the principle of induction allows us to conclude the proof.


\end{proof}

\begin{nota}\label{last} (1) Theorem \ref{Main Result} in particular implies that a divergent series cannot be summable with respect to two germs if they depend on different variables. This observation has been applied in \cite{CM} to obtain convergence of formal solutions of Pfaffian systems with normal crossings and giving an alternative proof of G\'{e}rard-Sibuya theorem.

(2) As a consequence of Theorem \ref{Main Result}, if some formal series $\hat f$ can be written in two ways $$\hat f= \hat g_1+\cdots+ \hat g_n=\hat h_1+\cdots+ \hat h_n,$$ where $\hat g_j,\hat h_j\in E\{\xx\}_{1/k_j}^{\PP_{j}}$ and the couples $(\PP_j,1/k_j)$ are pairwise not equivalent, then these decompositions are essentially the same in the sense that all differences $\hat{g}_j-\hat{h}_j$ are analytic. This is a first step towards a definition of multisummability with respect to analytic germs.
\end{nota}

Unfortunately {\em sums} of $P_j$-$k_j$--summable series are not sufficient to define a multisummability compatible, e.g.,\ with products. We give an example of a product of summable series that cannot be a sum of Gevrey series.

\begin{eje} Consider the series $\hat{f}(t)=\sum_{n\geq 0} n! t^{n}$ which is known to be $1$--summable and the product $$\hat{F}(x_1,x_2)= \hat{f}(x_1)\cdot \hat{f}(x_2) = \sum_{k,l\geq 0} k! l! x_1^k x_2^l.$$ Assume that $\hat{F}(x_1,x_2)= \hat{g}_1 (x_1,x_2)+ \hat{g}_2 (x_1,x_2)$, where $\hat{g}_j$ is $x_j$-$1$--Gevrey, $j=1,2$. Write
$\hat{g}_j (x_1,x_2)=\sum_{k} g_{1k}(x_2)x_1^k =\sum_{k,l\geq 0} g_{1kl}x_1^kx_2^l$. There are constants $C,A>0$ such that $|g_{1k}(x_2)|\leq CA^k k!$, for small $x_2$ and for all $k\in\N$. By Cauchy's inequalities, there is another constant $B>0$ such that  $|g_{1kl}|\leq CA^k B^l k!$, for all $k,l\in\N$. Analogously, if we write $\hat{g}_2(x_1,x_2)=\sum_{k,l\geq 0} g_{2kl}x_1^kx_2^l$, we find that $|g_{2kl}|\leq CA^k B^l l!$, for all $k,l\in\N$. Here, without loss of generality, we use the same constants. If we consider the coefficients of $x_1^kx_2^k$, we would have that $k!^2\leq 2 C (AB)^k\,k!$, for all $k\in\N$, which is impossible.

\end{eje}

The example shows that it is desirable to have at least an analog of Remark \ref {last}(2) for products of summable series. As a corollary of Theorem \ref{Main Result} we prove here the following weaker statement for the case $E=\C$.

\begin{coro}\label{weak Tauber product} Let $\PP_j\in\mathcal{O}\setminus\{0\}$, $\PP_j(\00)=0$, $k_j>0$ for $j=1,\dots,n$. For each $j=1,\dots,n$ consider a series $\hat{f}_j\in \C\{\xx\}_{1/k_j}^{\PP_j}$. If $$\hat{f_0}=\hat{f_1}\cdots\hat{f}_n\in \C\{\xx\},$$ and the couples $(\PP_j,1/k_j)$ are pairwise not equivalent, then $\hat f_j$ are convergent for all $j=1,\dots,n$.\end{coro}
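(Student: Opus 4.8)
The plan is to convert the multiplicative hypothesis into an additive one via logarithmic derivatives and then apply Theorem \ref{Main Result}. For logarithmic derivatives to split the product into terms of a single summability type, the factors must be units, so the first task is to arrange this. We may assume all $\hat f_j\neq0$, whence $\hat f_0\in\OO\setminus\{0\}$. The crucial point is that $\prod_{j=1}^n\PP_j\cdot \hat f_0\in\OO\setminus\{0\}$ is \emph{convergent}, so Lemma \ref{Normalization} supplies a finite chain $T$ of diffeomorphisms, blow-ups $b_\xi$ and ramifications $r_m$ reducing it to normal crossings in some final chart. Since each factor of a normal crossings germ (obtained with a common diffeomorphism) has normal crossings, in that chart every $\QQ_j:=\PP_j\circ T$ and $\hat f_0\circ T$ equals a monomial times a unit. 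Writing $\hat g_j=\hat f_j\circ T$, Remark \ref{Remark 4}(3) together with invariance under diffeomorphisms gives $\hat g_j\in\C\{\xx\}^{\QQ_j}_{1/k_j}$; Lemma \ref{Equivalence relation and blow ups} keeps the couples $(\QQ_j,1/k_j)$ pairwise inequivalent; and since $\QQ_j=\xx^{\aa_j}V_j$ is associated to a monomial with $V_j(\00)\neq0$, Remark \ref{Remark 4}(1) gives $\C\{\xx\}^{\QQ_j}_{1/k_j}=\C\{\xx\}^{\xx^{\aa_j}}_{1/k_j}$.

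Next I would factor out the monomial parts. As $\prod_{j}\hat g_j=\hat f_0\circ T=\xx^{\boldsymbol b}U$ is a monomial times a unit and $\C[[\xx]]$ is a unique factorization domain in which the only prime divisors of $\xx^{\boldsymbol b}U$ are the coordinates $x_i$, unique factorization forces $\hat g_j=\xx^{\boldsymbol c_j}U_j$ with $U_j(\00)\neq0$ and $\sum_j\boldsymbol c_j=\boldsymbol b$. Removing the monomial preserves summability variable by variable: dividing $\hat g_j$ by an $x_i$ with $c_{j,i}>0$ is covered by Remark \ref{Remark 3}(3) when $a_{j,i}>0$ (then $x_i\mid \xx^{\aa_j}$), while if $a_{j,i}=0$ the variable $x_i$ is a regular parameter for $\xx^{\aa_j}$-summability (Remark \ref{Remark 5}) and division by it clearly preserves summability. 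Iterating yields $U_j\in\C\{\xx\}^{\xx^{\aa_j}}_{1/k_j}$ with $U_j$ a unit.

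Now the multiplicative-to-additive step runs cleanly. Each $\C\{\xx\}^{\xx^{\aa_j}}_{1/k_j}$ is an algebra stable under partial derivatives, and stable under passing to the reciprocal of a unit, because the reciprocal is a left composition with $z\mapsto 1/z$, analytic near $U_j(\00)\neq0$ (Remark \ref{Remark 3}(5)); hence each logarithmic derivative $U_j^{-1}\,\partial U_j/\partial x_i$ lies in $\C\{\xx\}^{\xx^{\aa_j}}_{1/k_j}$. Differentiating $\log$ of the convergent relation $\prod_j U_j=U$ gives, for every coordinate $x_i$,
$$\sum_{j=1}^n \frac{1}{U_j}\frac{\partial U_j}{\partial x_i}=\frac{1}{U}\frac{\partial U}{\partial x_i}\in\C\{\xx\}.$$
Absorbing the convergent right-hand side into one summand (convergent series belong to every class $\C\{\xx\}^{\xx^{\aa_j}}_{1/k_j}$) produces a vanishing sum of $n$ series with pairwise inequivalent couples, so Theorem \ref{Main Result} forces every $U_j^{-1}\,\partial U_j/\partial x_i$ to be convergent. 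Thus all first partials of $\log(U_j/U_j(\00))$ are convergent, and a formal series with convergent gradient is itself convergent; therefore $U_j$, then $\hat g_j=\xx^{\boldsymbol c_j}U_j$, and finally $\hat f_j$ (by Lemma \ref{Convergence is preserved by blow-ups}, pulling convergence back through the chain $T$) are convergent.

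The main obstacle is precisely this reduction to units: logarithmic differentiation only isolates single summability types after dividing by units, which is impossible directly when the $\hat f_j$ vanish at the origin. The resolution I rely on is that the \emph{product} $\hat f_0$ is convergent, so it can be monomialized by Lemma \ref{Normalization} and split by unique factorization into monomial times unit factors, after which one must verify—this being the delicate check—that stripping the monomial does not spoil $\xx^{\aa_j}$-summability. Everything after that is a routine combination of Theorem \ref{Main Result} with the reconstruction of a series from its logarithmic derivatives.
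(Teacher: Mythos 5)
Your overall strategy is the paper's: monomialize the convergent product via Lemma \ref{Normalization}, use unique factorization in $\C[[\xx]]$ to write each transformed factor as a monomial times a unit, strip the monomials (Remark \ref{Remark 3}(3)), and convert the product into a sum by a logarithm so that Theorem \ref{Main Result} applies; your use of logarithmic derivatives plus ``convergent gradient implies convergence'' is an equivalent variant of the paper's ``taking the logarithm''. But there is a genuine gap at the assertion that ``Lemma \ref{Equivalence relation and blow ups} keeps the couples $(\QQ_j,1/k_j)$ pairwise inequivalent.'' That lemma says the original couples are equivalent if and only if the pulled-back couples are equivalent \emph{for every} chart $\xi$; its contrapositive only produces \emph{some} chart at which a given pair stays inequivalent, not the particular charts your chain $T$ actually uses. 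Merging really happens: for $\PP_0=x_1$, $\PP_1=x_1+x_2$, $k_0=k_1$, one has $\PP_1\circ b_\xi=v_2(1+\xi+v_1)$, so the two couples become equivalent in every chart $\xi\notin\{-1,\infty\}$. Worse, with three or more germs no single chart need work for all pairs: for $x_1$, $x_1+x_2$, $x_1+3x_2$ (equal $k$'s), the pairwise separation sets after one blow-up are $\{-1,\infty\}$, $\{-1/3,\infty\}$ and $\{-1,-1/3\}$, whose intersection is empty, and a blow-up is unavoidable since three pairwise transverse smooth branches through the origin are not normal crossings. At that point your application of Theorem \ref{Main Result} to the vanishing sum of logarithmic derivatives is illegitimate, because its hypothesis of pairwise inequivalence fails after $T$.

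The repair is the grouping device from case (2) of the paper's proof of Theorem \ref{Main Result}, combined with an induction on $n$: after $T$, partition $\{1,\dots,n\}$ according to equivalence of the couples $(\QQ_j,1/k_j)$, apply Theorem \ref{Main Result} to the grouped sums of logarithms (or logarithmic derivatives) with one representative couple per group, conclude that each grouped product $\prod_{j\in I_i}\hat f_j$ is convergent after pulling back via Lemma \ref{Convergence is preserved by blow-ups}, and then treat each group with $|I_i|\geq 2$ as an instance of the corollary with fewer factors — the \emph{original} couples inside a group are still pairwise inequivalent by hypothesis. (The paper's own proof compresses exactly this into the word ``essentially''.) Two smaller remarks: your division by a variable $x_i$ with $a_{j,i}=0$ via Remark \ref{Remark 5} is correct but not ``clear'' — one needs the Schwarz-lemma bound $\sup_{|x_i|<\rho}|h(x_i)/x_i|\leq \rho^{-1}\sup|h|$ to see that division is a bounded operator on the relevant Banach space; the paper sidesteps this by augmenting $\phi$ with a further monomial blow-up so that every variable divides each monomialized $\PP_j$, and you could do the same. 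Also note the statement implicitly assumes $\hat f_j\not\equiv 0$, as you observed.
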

	
\begin{proof} There is a sequence $\phi$ of monomial blow ups, ramifications and right compositions with analytic diffeomorphisms such that $\hat{f}_0\circ \phi$ and $\PP_1\circ \phi,\cdots,\PP_n\circ \phi$ are products of a monomial times a unit in $\C\{\xx\}$. We can augment $\phi$ by a monomial blow up such that additionally, all the monomial factors of the $\PP_j\circ \phi$ contain every variable $x_1,\dots,x_d$. Now $\hat{f_0}\circ \phi=(\hat{f}_1\circ \phi)\cdots(\hat{f}_n\circ \phi)$ and hence the $\hat f_j\circ \phi$ are also products of monomials and units. Thus if we write $(\hat f_j\circ \phi)(\xx)=\xx^{\aa_j}\hat U_j(\xx)$, $\hat{U}_j\in\widehat{\mathcal{O}}^\ast$ a unit, $j=0,1,\dots,n$ we must have $\aa_0=\aa_1+\cdots+\aa_n$. By Remark \ref{Remark 3} (3), we can divide by the factors of the monomials and obtain $\hat U_0=\hat U_1\cdots\hat U_n$, where $\hat U_0$ is convergent and the $\hat U_j$ are $(\PP_j\circ \phi)$-$k_j$--summable. Taking the logarithm, we arrive essentially at the situation of Theorem \ref{Main Result}.
\end{proof}

\begin{eje}There are singularly perturbed differential equations with a formal solution being not $\PP$-$k$--summable for any $\PP$ or $k>0$. We provide an example based on the one given by J.P.\ Ramis and Y.\ Sibuya in \cite{RS89} for the case of one variable.
	
Consider Euler's equation $t^2y'+y=t$ and its formal solution $\hat{E}(t):=\sum_{n=0}^\infty (-1)^n n! t^{n+1}$. For any germ $\PP\in\mathcal{O}\setminus\{0\}$ such that $\PP(\00)=0$, the series $\hat y=\hat{E}(\PP)$ is $\PP$-$1$--summable and it satisfies the system of equations $\PP^2\frac{\d y}{\d x_j}+\frac{\d \PP}{\d x_j}y=\frac{\d \PP}{\d x_j}\PP$,  $j=1,\dots,d.$ 
			
Let us consider the skew-ring of differential operators $\mathbb{C}(\{ \xx\})[\partial_1,\ldots, \partial_d]$, $\partial_j=\partial_{x_j}$,with product satisfying $[\partial_i,\partial_j]=0$ and $\partial_j\cdot f= f\partial_j+ \partial_jf$, for every germ $f\in \mathbb{C}(\{ \xx\})$. For any $\PP\in\mathcal{O}\setminus\{0\}$, let $L_{P,j}:= P^2 \partial_{j} + \partial_j P$, that verifies $L_{P,j}(\hat{E}(P))= \partial_j P\cdot P$. 
	
To construct an operator having $\hat{E}(P)+ \hat{E}(Q)$ as solution, for fixed $P,Q\in\mathcal{O}\setminus\{0\}$, we can look for a right least common multiple  of $L_{P,j}$ and $L_{Q,j}$: it must be an operator $L_j$ such that $L_j= M_{P,j} L_{P,j}= M_{Q,j}L_{Q,j}$, for some $M_{P,j}$, $M_{Q,j}$. Indeed, if $L_{j}= A_j \partial_j^{2}+B_j \partial_j +C_j$, performing division to the right by $L_{P,j}$, we obtain $$L_j=M_{P,j}L_{P,j}+R_{P,j}, \quad  M_{P,j}=\frac{A_j}{\PP^2} \d_{j}+\frac{1}{\PP^2}\left(B_j-\frac{A_j}{\PP^2}(2\PP+1)\d_j P\right),$$ and $R_{P,j}=C_j-\frac{A_j}{\PP^2}\d^2_j \PP-\frac{1}{\PP^2}\d_j \PP\left(B_j-\frac{A_j}{\PP^2}(2\PP+1)\d_j\PP\right)$. If we require that $R_{P,j}=0$ and $R_{Q,j}=0$, then the equation $R_{P,j}=R_{Q,j}$ determines $A_j/B_j$. Thus,  we can choose 
	\begin{align*}
	A_j  = &  P^{2} Q^{2} \left(   Q^{2}\partial_j P    -  P^{2}\partial_j Q  \right) , \\
	B_j  = & Q^{4}\left(\left( 2P+1 \right)\left( 
	\partial_j P\right) ^{2}- P^{2}\partial^2_j P 
	\right) -  P^4\left(\left( 2Q+1\right)  \left( \partial_j Q \right) ^{2}-\partial^2_j Q\right) Q^{2},
	\end{align*} and $C_j$ determined by the equalities $R_{P,j}=0$ or $R_{Q,j}=0$. It follows that  $\hat{E}(\PP)+\hat{E}(\QQ)$ is a formal solution of the system \begin{equation}\label{Example Sol PDEs} L_j(y)=M_{\PP,j}\left(\PP \d_j\PP\right)+M_{\QQ,j}\left(\QQ \d_j\QQ \right),\quad j=1,\dots,d.
	\end{equation}  Note that $A_j=0$, for all $j=1,\dots,d$ if and only if $\QQ=U\PP$, where $U=1+c\QQ\in\mathcal{O}^\ast$, $c\in\C$. In this case we don't obtain a new equation since $L_j=Q^2L_{P,j}=P^2L_{Q,j}$, for all $j$. In fact, the solution $\hat{E}(\PP)+\hat{E}(\QQ)$ is $\PP$-$1$--summable. If this is not the case, we can use Theorem \ref{Main Result} to conclude that  $\hat{E}(\PP)+\hat{E}(\QQ)$ is not $\PP$-$k$--summable, for any $\PP, k$, but it is still a formal solution of the system (\ref{Example Sol PDEs}). Finally, if $P$ and $Q$ are polynomials, so is $L_j (\hat{E}(\PP)+\hat{E}(\QQ))$, and $\hat{E}(\PP)+\hat{E}(Q)$ is a solution of the polynomial differential equation $\partial_j^{N} L_j (y)=0$,  for an appropriate $N\in \N$.



We refer the reader to Examples 8.1 and 8.2 in \cite{Sum wrt germs} for a singular ordinary and a partial differential equation with $\PP$-$1$--summable formal solutions, respectively, where $\PP$ is a polynomial in two variables with certain conditions. We note that due to Theorem \ref{Main Result}, $\PP$-$1$--summability is essentially the only $\QQ$-$k$--summability method applicable to these formal solutions. In particular, monomial summability is not sufficient to sum these power series.

\end{eje}

\bibliographystyle{plaindin_esp}

\end{document}